\newtheorem{thm}{Theorem}[section]
\newtheorem{prop}[thm]{Proposition}
\newtheorem{lem}[thm]{Lemma}
\newtheorem{cor}[thm]{Corollary}
\theoremstyle{definition}
\newtheorem{definition}[thm]{Definition}
\newtheorem{example}[thm]{Example}
\newtheorem{rem}[thm]{Remark}
\numberwithin{equation}{section}
\newcommand{\nf}{\mathrm{nf}}
\newcommand{\aV}{\mathcal{U}}
\newcommand{\U}{\mathcal{U}}
\newcommand{\W}{\mathcal{W}}
\newcommand{\aW}{\mathcal{X}}
\newcommand{\Wp}{\mathcal{X}}
\newcommand{\M}{\mathcal{M}}
\newcommand{\X}{\mathcal{Y}}
\newcommand{\Q}{\mathcal{Q}}
\newcommand{\T}{\mathcal{T}}
\newcommand{\J}{\mathcal{J}}
\newcommand{\B}{\mathcal{B}}
\newcommand{\qq}{\mathbb{Q}}
\newcommand{\I}{\mathcal{I}}
\newcommand{\C}{\mathcal{C}}
\newcommand{\Cp}{\mathcal{C}}
\newcommand{\Pp}{\mathcal{P}}
\newcommand{\Ep}{\mathcal{E}}
\newcommand{\p}{\mathbb{P}}
\newcommand{\pp}{\mathbb{P}}
\renewcommand{\H}{\mathcal{H}}
\newcommand{\Hp}{\mathcal{H}}
\newcommand{\F}{\mathcal{F}}
\renewcommand{\P}{\mathcal{P}}
\newcommand{\E}{\mathcal{E}}
\renewcommand{\O}{\mathcal{O}}
\newcommand{\Mg}{\M_g}
\renewcommand{\tilde}{\widetilde}
\DeclareMathOperator{\rank}{rank}
\DeclareMathOperator{\GL}{GL}
\DeclareMathOperator{\SL}{SL}
\DeclareMathOperator{\Pic}{Pic}
\DeclareMathOperator{\Supp}{Supp}
\DeclareMathOperator{\Sym}{Sym}
\DeclareMathOperator{\PGL}{PGL}
\DeclareMathOperator{\BSL}{BSL}
\DeclareMathOperator{\codim}{codim}
\DeclareMathOperator{\Proj}{Proj}
\newcommand{\hannah}[1]{{\color{teal} ($\spadesuit$ Hannah: #1)}}
\newcommand{\sam}[1]{{\color{red} ($\spadesuit$ Sam: #1)}}
\begin{document}
\title{Chow rings of low-degree Hurwitz spaces}

\author{Samir Canning, Hannah Larson}
\thanks{During the preparation of this article, S.C. was partially supported by NSF RTG grant DMS-1502651. H.L. was supported by the Hertz Foundation and NSF GRFP under grant DGE-1656518. This work will be part of S.C.'s and H.L.'s Ph.D. theses.}
\email{srcannin@ucsd.edu}
\email{hlarson@stanford.edu}
\subjclass[2010]{14C15, 14C17}
\maketitle


\begin{abstract}
While there is much work and many conjectures surrounding the intersection theory of the moduli space of curves, relatively little is known about the intersection theory of the Hurwitz space $\Hp_{k, g}$ parametrizing smooth degree $k$, genus $g$ covers of $\pp^1$. Let $k = 3, 4, 5$. We prove that the rational Chow rings of $\Hp_{k,g}$ stabilize in a suitable sense as $g$ tends to infinity. In the case $k = 3$, we completely determine the Chow rings for all $g$. 
We also prove that the rational Chow groups of the simply branched Hurwitz space $\Hp^s_{k,g} \subset \Hp_{k,g}$ are zero in codimension up to roughly $g/k$. In \cite{CL}, results developed in this paper are used to prove that the Chow rings of $\M_7, \M_8,$ and $\M_9$ are tautological.
\end{abstract}

\section{Introduction}
Intersection theory on the moduli space of curves $\M_g$ has received much attention since Mumford's famous paper \cite{Mum}, in which he introduced the Chow ring of $\M_g$. Based on Harer's result \cite{H} that the cohomology of the moduli space of curves is independent of the genus $g$ in degrees small relative to $g$, Mumford conjectured that the stable cohomology ring is isomorphic to $\qq[\kappa_1, \kappa_2, \kappa_3, \ldots]$. Madsen--Weiss \cite{MW} later proved Mumford's conjecture. 
It is unknown whether there is an analogous stabilization result in the Chow ring of $\M_g$. Upon restricting attention to the tautological ring, however, more is known.

The \emph{tautological subring} $R^*(\M_g) \subseteq A^*(\M_g)$ is defined to be the subring of the rational Chow ring generated by the kappa classes. There are many conjectures concerning the relations and structure of the tautological ring. Prominent among them is Faber's conjecture \cite[Conjecture 1]{F}, which states that the tautological ring should be Gorenstein with socle in codimension $g-2$ and generated by the first $\lfloor g/3\rfloor$ kappa classes with no relations in degree less than $\lfloor g/3\rfloor$. 
The Gorenstein part of Faber's conjecture is unknown, although it has been shown to hold when $g\leq 23$ by a direct computer calculation of Faber. The second portion of Faber's conjecture has been proved:
Ionel \cite{I} proved that the tautological ring is generated by $\kappa_1, \kappa_2, \ldots, \kappa_{\lfloor g/3 \rfloor}$, and Boldsen \cite{B} proved that there are no relations among the $\kappa$-classes in degrees less than $\lfloor g/3 \rfloor$. In other words, there is a surjection
\begin{equation} \label{forM}
\qq[\kappa_1, \kappa_2, \ldots, \kappa_{\lfloor g/3 \rfloor}] \twoheadrightarrow R^*(\M_g),
\end{equation}
which is an isomorphism in degrees less than $\lfloor g/3 \rfloor$.

In this paper, we study the Chow rings of low-degree Hurwitz spaces. Our main theorem is a stabilization result of a similar flavor to \eqref{forM}.
Let $\Hp_{k,g}$ be the Hurwitz stack parametrizing degree $k$, genus $g$ covers of $\pp^1$ up to automorphisms of the target.
Let $\Cp$ be the universal curve and $\Pp$ the universal $\pp^1$-fibration over the Hurwitz space $\Hp_{k,g}$:
\begin{center}
\begin{tikzcd}
\Cp \arrow{r}{\alpha} \arrow{rd}[swap]{f} & \Pp \arrow{d}{\pi} \\
& \Hp_{k,g}.
\end{tikzcd}
\end{center}
We define the \emph{tautological subring} of the Hurwitz space $R^*(\Hp_{k,g}) \subseteq A^*(\Hp_{k,g})$ to be the subring generated by classes of the form $f_*(c_1(\omega_f)^i \cdot \alpha^*c_1(\omega_\pi)^j) = \pi_*(\alpha_*(c_1(\omega_f)^i)  \cdot c_1(\omega_\pi)^j)$.
Let $\Ep^\vee$ be the cokernel of the map $\O_{\Pp} \to \alpha_* \O_{\Cp}$ (the universal ``Tschirnhausen bundle"). Set $z =  -\frac{1}{2}c_1(\omega_\pi) ``= c_1(\O_{\Pp}(1))"$.
Our theorem will be stated in terms of the tautological classes $c_2 = -\pi_*(z^3) \in A^2(\Hp_{k,g})$ and
\[a_i  = \pi_*(c_i(\Ep) \cdot z) \in A^i(\Hp_{k,g}) \qquad \text{and} \qquad a_i' = \pi_*(c_i(\Ep))  \in A^{i-1}(\Hp_{k,g}).\]

When $k=3,4,5$, our main theorem gives a minimal set of generators for $R^*(\Hp_{k,g})$ and determines all relations among them in degrees up to roughly $g/k$. In contrast with the case of $\Mg$ in \eqref{forM}, the tautological ring of $\Hp_{k,g}$ does \emph{not} require a growing number of generators as $g$ increases.
In degree $3$, we determine the full Chow ring of $\Hp_{3,g}$.
When $k = 3, 5$, our results imply that the dimensions of the Chow groups of $\Hp_{k,g}$ are independent of $g$ for $g$ sufficiently large. In degree $4$, \emph{factoring covers} --- i.e. covers $C \to \pp^1$ that factor as a composition of two double covers $C \to C' \to \pp^1$ --- present a difficulty. We instead obtain stabilization results for the Chow groups of $\Hp_{4,g}^{\mathrm{nf}} \subseteq \Hp_{4,g}$, the open substack parametrizing non-factoring covers, or equivalently covers whose monodromy group is not contained in the dihedral group $D_4$.

\begin{thm}\label{main}
Let $g\geq 2$ be an integer. 
\begin{enumerate}
    \item The rational Chow ring of $\Hp_{3,g}$ is
    \[
    A^*(\Hp_{3,g}) = R^*(\Hp_{3,g}) =\begin{cases}\qq & \text{if $g=2$} \\
    \qq[a_1]/(a_1^2) & \text{if $g=3,4,5$} \\
    \qq[a_1]/(a_1^3) & \text{if $g\geq 6$.} \end{cases}
    \]
    \item Let $r_i = r_i(g)$ be defined as in Section \ref{ach}. For each $g$ there is a map
    \[\frac{\qq[a_1, a_2', a_3']}{\langle r_1, r_2, r_3, r_4 \rangle} \twoheadrightarrow R^*(\Hp_{4,g}) \subseteq A^*(\Hp_{4,g}) \to A^*(\Hp_{4,g}^{\mathrm{nf}}),\] such that the composition is an isomorphism in degrees up to $\frac{g+3}{4} - 4$.
    Furthermore, the dimension of the Chow group $A^i(\Hp^{\mathrm{nf}}_{4,g})$ is independent of $g$ for $g>4i+12$. When $g>4i+12$, the dimensions are given by
    \[
    \dim A^i(\Hp^{\mathrm{nf}}_{4,g}) = \dim R^i(\H_{4,g}) =\begin{cases}
    2 & i=1,4
    \\ 
    4 & i=2
    \\
    3 & i=3
    \\
    1 &i\geq 5.
    \end{cases}
    \]
    \item Let $r_i = r_i(g)$ be as defined in Section \ref{ach5}. There is a map
    \[\frac{\qq[a_1, a_2', a_2, c_2]}{\langle r_1, r_2, r_3, r_4, r_5 \rangle} \twoheadrightarrow R^*(\Hp_{5,g}) \subseteq A^*(\Hp_{5,g}) \]
    such that the composition is an isomorphism in degrees $\leq \frac{g+4}{5} - 16$.
    Furthermore, the dimension of the Chow group $A^i(\Hp_{5,g})$ is independent of $g$ for $g>5i+76$. When $g>5i+76$, the dimensions are given by
    \[
    \dim A^i(\Hp_{5,g})= \dim R^i(\H_{5,g}) = \begin{cases}
    2 & i=1, i\geq 7
    \\
    5 & i=2
    \\
    6 & i=3
    \\
    7 & i=4
    \\
    4 & i=5
    \\
    3 & i=6.
    \end{cases}
    \]
\end{enumerate}
\end{thm}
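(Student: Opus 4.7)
The plan is to follow the Casnati--Ekedahl framework for low-degree covers: realize $\Hp_{k,g}$ as an open substack of a parameter stack fibered over a classifying stack $\B_k$ of the relevant bundles on the universal $\pp^1$-fibration. For $k=3$, the cover $C \to \pp^1$ is cut out by a single section in $\pp(\Ep)$ where $\Ep$ is the rank-$2$ Tschirnhausen bundle. For $k=4$, it is the base locus in $\pp(\Ep)$ (rank $3$) of a pencil of quadrics parameterized by a rank-$2$ bundle $\Fp$. For $k=5$ the Casnati--Ekedahl resolution is longer and involves an additional syzygy bundle; this ultimately accounts for the fact that $c_2$ (which comes from the base stack of $\pp^1$-fibrations) survives as an independent generator of the tautological ring rather than being expressible in terms of Chern classes of $\Ep$.

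My first step is to compute the Chow ring of the open ``most balanced'' stratum of $\Hp_{k,g}$, where each auxiliary bundle restricts to the generic splitting type on every $\pp^1$-fiber. On this stratum, $\B_k$ is (up to a gerbe) the classifying stack of a parabolic subgroup of a product of $\GL$'s, so its rational Chow ring is freely generated by the Chern classes of the bundle summands. Pushing these forward via $\pi_*$ and using the tautological $\pp^1$-bundle relation satisfied by $z$ rewrites them as polynomials in $a_i, a_i', c_2$. A standard projective-bundle argument on the parameter stack then presents $A^*$ of the balanced open locus as a quotient $\qq[\text{generators}]/\langle r_i\rangle$ of precisely the shape appearing in the theorem.

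My second step is to control the complement. Non-balanced splitting strata of a rank-$r$ bundle of degree $d$ on $\pp^1$ have codimension growing linearly in $d$; for $\Ep$ on $\Hp_{k,g}$ one has $d \sim g$ and $r = k-1$, giving codimension $\gtrsim g/k$. By excision, the restriction $A^*(\Hp_{k,g}) \to A^*(\text{balanced open})$ is therefore an isomorphism in codimensions below this threshold, which produces the numerical bounds $(g+3)/4 - 4$ and $(g+4)/5 - 16$ in parts (2) and (3); combining with step one yields the stabilization and the listed dimensions. For $k=4$, the factoring locus $\Hp_{4,g} \setminus \Hp_{4,g}^{\nf}$ corresponds to pencils in $\pp(\Ep)$ all of whose quadrics have rank $\leq 2$; it lies in splitting strata whose codimension does not grow with $g$, which is exactly why one must pass to the non-factoring open substack for the stabilization statement.

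For part (1) one can push the computation through for all $g$ rather than only in a stable range: since $\Ep$ has rank $2$ and no auxiliary syzygies enter, $A^*(\B_3)$ is small enough that every splitting stratum can be handled directly, and the fundamental class of the discriminant locus inside the relevant projective bundle becomes a polynomial in $a_1$ of bounded degree, yielding the truncations $\qq[a_1]/(a_1^d)$ recorded in the statement. The main obstacle throughout is the bookkeeping of the Casnati--Ekedahl relations $r_i$: one must compute the classes of non-balanced loci and excess intersection contributions in closed form in terms of $a_i, a_i', c_2$, and this grows substantially more involved in the $k=5$ case (as reflected in the shift $+76$) because of the additional syzygy bundle and the extra independent generator $c_2$.
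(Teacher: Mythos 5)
Your setup (Casnati--Ekedahl parameter space over a moduli stack of bundles on $\pp^1$-bundles, plus a codimension estimate showing that unbalanced/non-positive splitting loci can be excised in codimension $\gtrsim g/k$) matches the first half of the paper's argument. But there is a genuine gap at the heart of your first step: the relations $r_i$ do \emph{not} arise from a "standard projective-bundle argument on the parameter stack." The parameter space is a vector bundle $\mathcal{X}^\circ_{k,g}$ over $\B^\circ_{k,g}$, so its Chow ring is just $A^*(\B^\circ_{k,g})$, which by the companion paper is a \emph{free} polynomial ring on the $a_i, a_i', b_i, b_i', c_2$ in degrees up to roughly $g$. The Hurwitz space is the complement in $\mathcal{X}^\circ_{k,g}$ of the discriminant $\Delta_{k,g}$ of sections cutting out singular or non-finite covers, and every relation $r_i$ is the pushforward of a class supported on $\Delta_{k,g}$. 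Your proposal never explains how to compute the ideal $\im\bigl(A_*(\Delta_{k,g}) \to A_*(\mathcal{X}^\circ_{k,g})\bigr)$, which is the entire content of the presentation. The paper does this by (a) proving that every degenerate section acquires a point with $2$-dimensional tangent space (Lemmas \ref{3limits}, \ref{4limits}, \ref{whats_sing}), so that the incidence variety $\widetilde{\Delta}_{k,g}$ of "sections with a marked singular point" surjects properly onto $\Delta_{k,g}$; (b) realizing $\widetilde{\Delta}_{k,g}$ as the kernel of a surjective evaluation map into a (restricted/refined) principal parts bundle on a tower of projective and Grassmann bundles; and (c) invoking the Trapezoid Lemma to write the ideal as generated by $\pi_*\gamma_*a_*(c_{\mathrm{top}}(P)\cdot \alpha_i)$ over module generators $\alpha_i$ of the tower. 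Without some version of steps (a)--(c) you obtain only the surjection from a polynomial ring, with no relations and no dimension counts.

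The gap is most severe in degree $5$, where the curve is a Pfaffian degeneracy locus in $\pp\E'$ rather than a complete intersection, so the discriminant is not even the degeneracy locus of a single evaluation map: the paper must introduce the auxiliary space $G(2,\F)\times_\P \pp\E'$ and the bundle $RQ^1_{\pp\E'/B}(\W')$ of rank $15$, and separately excise the locus of maps $\E'\to\wedge^2\F$ that are not injective on fibers. Your part (1) sketch has the same issue in miniature: knowing the fundamental class $[\Delta_{3,g}]$ gives only the codimension-$1$ relation; one needs the pushforwards of \emph{all} classes from the resolution (and, for $g\le 5$ and $g=7$, extra relations from the splitting-locus classes and from the locus where the evaluation map drops rank) to pin down $A^*(\Hp_{3,g})$ in every codimension.
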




\begin{rem}
Angelina Zheng recently computed the rational cohomology of $\Hp_{3,5}$ in \cite{Z}, and, in subsequent work \cite{Z2}, finds the stable rational cohomology of $\Hp_{3,g}$. Together, our results prove that the cycle class map is injective. The corresponding statement for $\Mg$ is unknown, but when $g\leq 6$ it follows from the fact that the tautological ring is the entire Chow ring. 
\end{rem}

\begin{rem} \label{vanr}
Note that Theorem \ref{main}(2) implies that the restriction map $R^i(\Hp_{4,g}) \to R^i(\Hp_{4,g}^{\mathrm{nf}})$ is an isomorphism for $i < \frac{g+3}{4} - 4$. This implies an interesting vanishing result: Any tautological class of codimension $i < \frac{g+3}{4} - 4$ supported on the locus of factoring covers is zero.
\end{rem}

\begin{rem}
In \cite{B3, B4, B5, BD4, BD5}, Bhargava famously applied structure theorems for degree $3, 4$, and $5$ covers to counting number fields. As in Bhargava's work, our techniques rely on special aspects of structure theorems that do not seem to extend to covers of degree $k \geq 6$.
Our need to throw out factoring covers in order to obtain asymptotic results for the full Chow ring seems to parallel the fact that, when quartic covers are counted by discriminant, the $D_4$ covers constitute a positive proportion of all covers \cite[Theorem 4]{BD4}.
\end{rem}

\begin{rem}
 Ellenberg-Venkatesh-Westerland \cite{EVW} have studied stability in the \emph{homology} of Hurwitz spaces of $G$ covers (which in particular separates out factoring covers). Like the work of Harer and Madsen-Weiss, their techniques are topological. On the other hand, ours are algebro-geometric: they are about the Chow groups rather than (co)homology and they work in characteristic $p>5$ without the use of a comparison theorem.
\end{rem}

\begin{rem}
For $g$ suitably large, our proof of Theorem \ref{main} (2) shows that $\dim R^i(\Hp_{4,g}) \leq 1$ for all $i \geq 5$, and similarly in (3) that
$\dim R^i(\Hp_{5,g}) \leq 2$ for all $i \geq 7$. Hence,  $R^*(\Hp_{4,g})$ and $R^*(\Hp_{5,g})$ are \emph{not} Gorenstein because there cannot be a perfect pairing for dimension reasons. On the other hand, $A^*(\Hp_{3,g})=R^*(\Hp_{3,g})$ is Gorenstein.  \end{rem}

Our method of proof is to study a large open substack $\Hp_{k,g}^\circ \subset \Hp_{k,g}$, which can be represented as an open substack of a vector bundle $\Wp_{k,g}^\circ$ over a certain moduli stack of vector bundles on $\pp^1$. The fact that the moduli space admits such a description comes from the structure theorms of degree $3, 4, 5$ covers and is precisely what is so special about these low-degree cases.
We then determine the Chow ring of $\Hp_{k,g}^\circ$ via excision on the complement of $\Hp_{k,g}^\circ$ inside $\Wp_{k,g}^\circ$. This complement is a ``discriminant locus" parametrizing singular covers and maps that are not even finite. The stability of the Chow groups we find fits in with the philosophy of Vakil--Wood \cite{VW} about discriminants and suggests some possible variations on their theme. The key point, which is reflected in the ampleness assumptions in some of the conjectures from \cite{VW}, is that the covers we parametrize correspond to sections of a vector bundle that becomes ``more positive" as the genus of the curve grows.
We compute generators for the Chow ring of the discriminant locus by constructing a resolution whose Chow ring we can compute. See Figure \ref{sumfig} in Section \ref{rels-step2} for a picture summarizing our method.

We also give formulas in Section \ref{app-sec} that express other natural classes on $\Hp_{k,g}$ --- namely the $\kappa$-classes pulled back from $\M_g$ and the classes corresponding to covers with certain ramification profiles --- in terms of the generators from Theorem \ref{main}. We give two applications of these formulas. First, we show that for $k = 4, 5$, ``the push forward of tautological classes on $\Hp_{k,g}$ are tautological on $\M_g$." (The case $k = 3$ already follows from Patel--Vakil's result that $A^*(\Hp_{3,g}) = R^*(\Hp_{3,g})$ is generated by $\kappa_1$ when $g > 3$, and and all classes on $\M_3$ are tautological.)
Note that for $k > 3$, there are tautological classes on $\Hp_{k,g}$ that are \emph{not} pullbacks of tautological classes on $\M_g$: Theorem \ref{main} implies $\dim R^1(\Hp_{k,g}) > 1$, so it cannot be spanned by the pullback of $\kappa_1$. 
Hence, our claim regarding pushforwards is not a priori true.
To set the stage for the theorem, let $\beta: \Hp_{k,g} \to \M_g$ be the forgetful morphism. Define $\M_g^k \subset \M_g$ to be the locus of curves of gonality $\leq k$. There is a proper morphism $\beta': \Hp_{k,g}\setminus \beta^{-1}(\Mg^{k-1}) \to \M_g \smallsetminus \M_g^{k-1}$. We define a class to be tautological on $\M_g \smallsetminus \M_g^{k-1}$ if it is the restriction of a tautological class on $\M_g$.

\begin{thm}\label{taut}
Let $g\geq 2$ be an integer and $k\in\{3,4,5\}$. The $\beta'$ push forward of classes in $R^*(\Hp_{k,g})$ are tautological
on $\M_g \smallsetminus \M_g^{k-1}$.
\end{thm}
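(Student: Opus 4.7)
The plan is to show that $R^*(\H_{k,g})$ is generated as a module over its subring $\beta^* R^*(\M_g)$ by a finite set of classes $\{x_0 = 1, x_1, \ldots, x_m\}$, each of whose $\beta'$-pushforwards lies in the restriction to $\M_g \setminus \M_g^{k-1}$ of the tautological ring of $\M_g$. Granted this, for any $\gamma = \sum \beta^* y_i \cdot x_i \in R^*(\H_{k,g})$, the projection formula gives $\beta'_* \gamma = \sum y_i \cdot \beta'_* x_i$, which is tautological on $\M_g \setminus \M_g^{k-1}$.

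For $k = 3$, the result of Patel--Vakil cited in the introduction gives $R^*(\H_{3,g}) = \beta^* R^*(\M_g)$ for $g > 3$, so the module is generated by $1$ alone and $\beta'_* 1$ is a classical multiple of the trigonal locus; the small-genus cases are trivial. For $k = 4, 5$, I would first use the formulas of Section \ref{app-sec} expressing each $\beta^*\kappa_i$ in terms of the generators $a_i, a_i', c_2$ from Theorem \ref{main}, combined with the given relations $r_j$, to identify a small finite set of module generators for $R^*(\H_{k,g})$ over $\beta^* R^*(\M_g)$ (this finiteness uses the bounded number of algebra generators of $R^*(\H_{k,g})$ together with the stable dimension estimates in Theorem \ref{main}). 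Each module generator $x_i$ is then pushed forward using the Cartesian square
\[
\begin{tikzcd}
\Cp \arrow{r}{\widetilde\beta} \arrow{d}{f} & \M_g^{\mathrm{univ}} \arrow{d}{\psi} \\
\H_{k,g} \arrow{r}{\beta} & \M_g,
\end{tikzcd}
\]
where $\M_g^{\mathrm{univ}}$ denotes the universal curve over $\M_g$. From the commuting identity $\beta'_* f_* = \psi_* \widetilde\beta_*$, together with the fact that, via Grothendieck--Riemann--Roch applied to the Tschirnhausen sequence $0 \to \O_\Pp \to \alpha_* \O_\Cp \to \Ep^\vee \to 0$, each $x_i = \pi_*(\cdots)$ can be rewritten as $f_*(P_i)$ for an explicit polynomial $P_i$ on $\Cp$ in $c_1(\omega_f) = \widetilde\beta^* c_1(\omega_\psi)$ and the ramification divisor $R = c_1(\omega_f) - \alpha^* c_1(\omega_\pi)$, the projection formula along $\widetilde\beta$ reduces the computation of $\beta'_* x_i$ to expressions of the form $\psi_*\bigl(c_1(\omega_\psi)^a \cdot \widetilde\beta_* R^b\bigr)$ for finitely many pairs $(a, b)$.

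The main obstacle is then to show that each $\widetilde\beta_* R^b$ is tautological on $\M_g^{\mathrm{univ}} \setminus \psi^{-1}(\M_g^{k-1})$, i.e., expressible in terms of $c_1(\omega_\psi)$ and $\psi^*$ of tautological classes on $\M_g$. Geometrically $\widetilde\beta_* R^b$ is supported on the Weierstrass-type locus of pairs $(C, p)$ for which $p$ is a ramification point of some $g^1_k$ on $C$; its identification as a tautological class should follow by combining the formulas in Section \ref{app-sec} for classes on $\H_{k,g}$ corresponding to covers with specified ramification profiles with projection formula and base change applied to the Cartesian square above, closing an induction on $b$. For the base case $\widetilde\beta_* 1 = N_k(g)\, \psi^*[\M_g^k \setminus \M_g^{k-1}]$, one needs the $k$-gonal locus itself to be tautological, which for $k = 4, 5$ follows from realizing $\H_{k,g}^\circ$ as an open substack of the vector bundle $\Wp_{k,g}^\circ$ over a moduli space of vector bundles on $\pp^1$ (whose Chow ring is entirely tautological), so that its image under the proper morphism $\beta'$ is the pushforward of a tautological class from a tautological space.
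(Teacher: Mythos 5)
Your overall skeleton --- exhibit finitely many module generators of $R^*(\Hp_{k,g})$ over $\beta^*R^*(\M_g)$ whose pushforwards are tautological, then conclude by the projection formula --- is exactly the paper's strategy (the paper's generators are $1, [T], [D], [T]\cdot[D], [U]$, shown to span in Lemmas \ref{first4} and \ref{5taut}). But the proposal has a genuine gap at the one step that carries all the weight: you never actually establish that the pushforwards of your generators are tautological. The paper gets this from the Faber--Pandharipande theorem (quoted before Corollary \ref{TDUpush}): the classes $\rho_*[\overline{\Hp}_g(\mu^1,\dots,\mu^m)]$ of admissible-cover loci with prescribed ramification profiles are tautological on $\overline{\M}_{g,n}$, and $[T], [D], [U]$ (and the gonality locus itself, taking all $\mu^i$ simple) are restrictions of such classes. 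Your substitute arguments do not do this job. The formulas of Section \ref{app-sec} express $[T], [D], [U]$ in terms of CE classes \emph{on the Hurwitz space}; they say nothing about where these classes land in $A^*(\M_g)$ after pushforward, so the claimed ``induction on $b$'' for $\widetilde\beta_*R^b$ has no inductive mechanism --- that class is supported on a Brill--Noether-type locus in the universal curve, and its tautologicality is essentially equivalent to the theorem you are trying to prove.

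The base case is where the flaw is sharpest: you argue that $\beta'_*1$ (a multiple of the $k$-gonal locus class) is tautological because $\Hp^\circ_{k,g}$ sits inside a vector bundle over a stack whose Chow ring is ``entirely tautological.'' Being the proper image of a space with a fully understood Chow ring does not make a class tautological on $\M_g$ --- tautologicality is membership in the $\kappa$-subring of the target, and pushforward gives no a priori control over that. The paper's own Remark after Theorem \ref{taut} is a counterexample to your reasoning: the bielliptic locus in $\M_{12}$ is the image of a perfectly nice cover space, yet its fundamental class is not tautological (van Zelm). To repair the proof you must import Faber--Pandharipande (or an equivalent computation of the Hurwitz-cycle classes) as external input; once you do, the GRR/ramification-divisor repackaging via $\widetilde\beta_*R^b$ becomes an unnecessary detour, since the module-generator argument already closes.
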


\begin{rem}
Theorem \ref{taut} is a key tool in recent work of the authors \cite{CL}, which proves
that the Chow rings of $\M_7, \M_8$ and $\M_9$ are tautological.
Because the tautological ring has been computed in these cases by Faber \cite{F}, this work settles the next open case in the program suggested by Mumford \cite{Mum} of determining the Chow ring of $\Mg$ for small $g$.
\end{rem}

\begin{rem}
We emphasize that 
when $k=4$, there can be non-tautological classes in low codimension supported on the locus of factoring covers. In particular, the fundamental class of the bielliptic locus on $\M_{12}$ is not tautological by a theorem of van Zelm \cite{VZ}, so Theorem \ref{taut} implies $R^*(\Hp_{4,g}) \neq A^*(\Hp_{4, g})$ for $g=12$.
\end{rem}

The second application of our formulas is to vanishing results for the Chow groups of 
the simply branched Hurwitz space
$\Hp^s_{k,g} \subseteq \Hp_{k,g}$. The Hurwitz space Picard rank conjecture \cite[Conjecture 2.49]{HM} says that $A^1(\Hp^s_{k,g}) = \Pic(\Hp^s_{k,g})\otimes \qq=0$ . This conjecture is known for $k\leq 5$ \cite{DP}, and for $k>g-1$ \cite{Mu}. 
In the cases $k =2, 3$, the stronger vanishing result $A^i(\Hp_{k,g}^s) = 0$ holds for all $i > 0$. 
The following theorem 
provides further evidence for a generalization of the Hurwitz space Picard rank conjecture to higher codimension cycles.

\begin{thm}\label{GPRC}
Let $g\geq 2$ be an integer. The rational Chow groups of the simply-branched Hurwitz space satisfy
    \begin{align*}
    A^i(\Hp^s_{4,g}) &=0 \qquad \text{for  $1\leq i < \frac{g+3}{4} - 4$}  \\
    A^i(\Hp^s_{5,g})&=0 \qquad \text{for $1\leq i < \frac{g+4}{5} - 16$.}
    \end{align*}
\end{thm}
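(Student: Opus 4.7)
The plan is to combine Theorem \ref{main} with the formulas (to be developed in Section \ref{app-sec}) expressing each tautological generator as a $\qq$-linear combination of classes supported on the non-simply-branched locus $\Hp_{k,g}\setminus \Hp^s_{k,g}$.

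First, I would reduce the problem to checking vanishing of the generators. For $k=4$, any simply branched cover has monodromy group $S_4$, hence not contained in $D_4$, so $\Hp^s_{4,g}\subseteq \Hp^{\mathrm{nf}}_{4,g}$. Theorem \ref{main}(2) then shows that for $i<\frac{g+3}{4}-4$ the group $A^i(\Hp^{\mathrm{nf}}_{4,g})$ is generated by the degree-$i$ monomials in $a_1,a_2',a_3'$. Since restriction to an open substack is surjective on Chow groups, the images of these monomials generate $A^i(\Hp^s_{4,g})$ as well. Analogously, for $k=5$, Theorem \ref{main}(3) shows that $A^i(\Hp^s_{5,g})$ is generated by monomials in $a_1,a_2',a_2,c_2$ for $i<\frac{g+4}{5}-16$. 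It therefore suffices to prove that each one of these generators individually restricts to zero on $\Hp^s_{k,g}$.

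Next, I would invoke the ramification-class formulas of Section \ref{app-sec}. The expectation is that each generator can be written as a $\qq$-linear combination of fundamental classes of closed substacks of $\Hp_{k,g}$ parametrizing covers with a specified non-simple ramification profile --- for instance covers with a triple ramification point, or with two simple ramification points lying above a single point of $\pp^1$, or with two branch points colliding. Each such stratum is contained in $\Hp_{k,g}\setminus \Hp^s_{k,g}$, so by excision every summand restricts to zero on $\Hp^s_{k,g}$. This immediately yields the stated vanishing.

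The main obstacle is producing these formulas. The divisorial generators $a_1$ and $a_2'$ should follow from Riemann--Hurwitz-type relations comparing $c_1(\E)$ to the universal branch divisor, together with a decomposition of the branch divisor into its simple and non-simple parts. The codimension-two generators $a_3'$, $a_2$, and $c_2$ will be more subtle: they likely require Grothendieck--Riemann--Roch on $f\colon \Cp\to \Hp_{k,g}$, together with a careful enumeration of how each stratum of non-simple ramification contributes to $\pi_*(c_i(\E)\cdot z^j)$ and to $\pi_*(z^3)$. Managing the interplay between the ramification-type stratification and the classes $a_i,a_i',c_2$ is where the bulk of the work will lie; once these identities are in hand, Theorem \ref{GPRC} will follow by the short excision argument above.
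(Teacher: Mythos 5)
Your strategy is the same as the paper's: reduce to the tautological generators via Theorem \ref{main} and surjectivity of restriction to the open substack $\Hp^s_{k,g}$, then show each generator lies in the ideal of classes supported on the non-simply-branched locus and conclude by excision. For $k=4$ this closes exactly as in the paper: $[T]$ and $[D]$ span $R^1$, so $a_1$ and $a_2'$ restrict to zero, and the quadruple-point class $[U]$ equals $4a_3'$ modulo the relations, so every monomial generator dies on $\Hp^s_{4,g}$.

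For $k=5$ there is a concrete gap in your stated expectation that ``each generator can be written as a $\qq$-linear combination of fundamental classes of closed substacks \dots with a specified non-simple ramification profile.'' The group $R^2(\Hp_{5,g})$ is $5$-dimensional (spanned by $a_1^2, a_1a_2', a_2, a_2'^2, c_2$); products of the divisor classes $[T],[D]$ account for only $3$ dimensions and $[U]$ for one more, so to kill both $a_2$ and $c_2$ you need a fourth independent codimension-$2$ class supported on the complement of $\Hp^s_{5,g}$. The paper does \emph{not} obtain this from the fundamental class of another ramification stratum (e.g.\ the $(3,2)$-locus); it instead uses the pushforward $\pi_*(\mu_*a_*([\widetilde{T}])\cdot z)$ of a tautological class from the universal triple ramification point --- a class supported on $T$ but not itself a stratum class --- and checks by direct computation that it and $[U]$ are independent modulo products of divisor classes. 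Your excision argument applies verbatim to any class supported on the complement, so this is a repairable adjustment rather than a fatal flaw; but as written your plan is not known to close, since nothing in the paper (or in your sketch) shows that the fundamental classes of the remaining codimension-$2$ ramification strata supply the missing dimension. You would either need to compute one of those stratum classes and verify the independence, or replace it by a non-fundamental class supported on $T$ as the paper does.
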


The paper is structured as follows. In Section \ref{conventions}, we introduce some notational conventions and some basic ideas from (equivariant) intersection theory that we will use throughout the paper. We prove a lemma, the ``Trapezoid Lemma", which establishes a useful set up where one can determine all relations coming from certain excisions
with an appropriate resolution.
In Section \ref{partsec}, we introduce certain bundles of principal parts, which will be used throughout the remainder of the paper. Loosely speaking, these bundles help detect singularities and ramification behavior. As we shall see in the later sections of the paper, constructing a suitable principal parts bundle often requires geometric insights and can be somewhat involved.
In Sections \ref{RE3}, \ref{re4}, and \ref{re5}, we use principal parts bundles and the Trapezoid Lemma to produce relations among tautological classes in $A^*(\Hp_{3,g})$, $A^*(\Hp_{4,g})$, and $A^*(\Hp_{5,g})$, respectively. From these calculations, we obtain the proof Theorem \ref{main}.
Finally, in Section \ref{app-sec}, we rewrite the $\kappa$-classes and classes that parametrize covers with certain ramification behavior in terms of our preferred generators. These calculations allow us to prove Theorems \ref{taut} and \ref{GPRC}.
\par Several of the calculations in this paper were using the Macaulay2 \cite{M2} package Schubert2 \cite{S2}. All of the code used in this paper is provided in a Github repository \cite{github}. Whenever there is a reference to a calculation done with a computer, one can find the code to perform that calculation in the Github repository.

\subsection*{Acknowledgments} We are grateful to our advisors, Elham Izadi and Ravi Vakil, respectively, for the many helpful conversations. In addition, we are grateful to Anand Patel, who pointed out the need for special arguments for $\Hp_{3,g}$ when $g$ is small. We thank Maxwell da Paixão de Jesus Santos for his correspondence, which inspired us to formulate Theorem 1.7. We thank Aaron Landesman and Andrea Di Lorenzo for their comments and insights.

\section{Conventions and some intersection theory} \label{conventions}
We will work over an algebraically closed field of characteristic $0$ or characteristic $p>5$. All schemes in this paper will be taken over this fixed field.
\subsection{Projective and Grassmann bundles} \label{pandg}
We follow the subspace convention for projective bundles: given a scheme (or stack) $X$ and a vector bundle $E$ of rank $r$ on $X$, set
\[
\p E:=\Proj(\Sym ^{\bullet} E^{\vee}),
\]
so we have the tautological inclusion
\[
\mathcal{O}_{\p E}(-1)\hookrightarrow \gamma^*E,
\]
where $\gamma :\p E\rightarrow X$ is the structure map. Set $\zeta:=c_1(\O_{\p E}(1))$. With this convention, the Chow ring of $\p E$ is given by
\begin{equation} \label{pbt}
A^*(\p E)=A^*(X)[\zeta]/\langle \zeta^r + \zeta^{r-1} c_1(E) + \ldots + c_r(E)\rangle.
\end{equation}
We call this the \emph{projective bundle theorem}.
Note that $1, \zeta, \zeta^2, \ldots, \zeta^{r-1}$ form a basis for $A^*(\pp E)$ as an $A^*(X)$-module. Since
\[ \gamma_* \zeta^i = \begin{cases} 0 & \text{if $i \leq r-2$} \\ 1 & \text{if $i = r-1$,} \end{cases}\]
this determines the $\gamma_*$ of all classes from $\p E$.

More generally, we define the Grassmann bundle $G(n, E)$ of $n$-dimensional subspaces in $E$, which is equipped with a tautological sequence
\[0 \rightarrow S \rightarrow \gamma^* E \rightarrow Q\rightarrow 0
\]
where $\gamma: G(n, E) \rightarrow X$ is the structure map and $S$ has rank $n$. The relative tangent bundle of $G(n, E) \to X$ is $\H om(S, Q)$. The Chow ring $A^*(G(n, E))$ is generated as an $A^*(X)$-\textit{algebra} by the classes $\zeta_i = c_i(Q)$. Of particular interest to us will be Grassmann bundles $A^*(G(2, E))$ when the rank of $E$ is either $4$ or $5$. If the rank of $E$ is $4$, $A^*(G(2, E))$ is generated as a $A^*(X)$-\textit{module} by $\zeta_1^i\zeta_2^j$ for $0\leq i\leq 2$, $0\leq j\leq 2$, $0\leq i+j\leq 2$. If the rank of $E$ is $5$, $A^*(G(2, E))$ is generated as a $A^*(X)$ \textit{module} by $\zeta_1^i\zeta_2^j\zeta_3^k$ for $0\leq i\leq 2$, $0\leq j\leq 2$, $0\leq k\leq 2$ and $0\leq i+j+k\leq 2$. See \cite{GSS} for a much more general discussion on the Chow rings of flag bundles. In particular, these bases seem to be the preferred ones of the Macaulay2 \cite{M2} package Schubert2 \cite{S2}, which is what we use for calculations in this paper.

\subsection{The Trapezoid Lemma}

Let $\tau: V \to B$ be a rank $r$ vector bundle. If $\sigma$ is a section of $V$ which vanishes in codimension $r$, then the vanishing locus of $\sigma$ has fundamental class $c_r(V) \in A^r(B)$.
The identity induces a section of $\tau^*V$ on the total space of $V$ whose vanishing locus is the zero section. Thus, a special case of this fact is that the zero section in the total space of a vector bundle has class $c_r(\tau^*V) = \tau^*c_r(V) \in A^r(V) \cong \tau^* A^r(B)$. More generally, suppose $\rho: X \to B$ is another vector bundle on $B$ and we are given a map of vector bundles $\phi: X \to V$ over $B$.
Composing $\phi$ after the section induced by the identity on the total space of $X$ defines a section of $\rho^*V$ on the total space of $X$. We call the vanishing locus $K$ of this section the \emph{preimage under $\phi$ of the zero section in $V$}.
If $\phi$ is a surjection of vector bundles, then $K$ is simply the total space of the kernel subbundle.
If $K$ has codimension $r$ inside the total space of $W$, then its fundamental class is $[K] = c_r(\rho^*V) = \rho^*c_r(V) \in A^r(X) \cong \rho^*A^r(B)$.

A basic tool we shall use repeatedly is the following.
\begin{lem}[``Trapezoid push forwards"] \label{trap}
Suppose $\widetilde{B} \to B$ is proper (e.g. a tower of Grassmann bundles). Let $X$ be a vector bundle on $B$ and let $V$ be a vector bundle of rank $r$ on $\widetilde{B}$. Suppose that we are given a map of vector bundles $\phi: \sigma^*X \rightarrow V$ on $\widetilde{B}$. Let $K \subset \sigma^*X$ be the preimage under $\phi$ of the zero section in $V$, and suppose that $K$ has codimension $r$.
We call this a trapezoid diagram:
\begin{center}
\begin{tikzcd}
&K \arrow{dr}[swap]{\rho''} \arrow{r}{\iota} &\sigma^* X \arrow{d}{\rho'} \arrow{r}{\sigma'} & X \arrow{d}{\rho} \\
&&\widetilde{B} \arrow{r}[swap]{\sigma} & B.
\end{tikzcd}
\end{center}
The image of $(\sigma' \circ\iota)_*: A_*(K) \to A_*(X)$ contains the ideal generated by $\rho^*(\sigma_*(c_r(V) \cdot \alpha_i))$ as $\alpha_i \in A^*(\widetilde{B})$ ranges over generators for $A^*(\widetilde{B})$ as a $A^*(B)$-module. Equality holds if $\phi$ is a surjection.
In other words, we have a surjective map of rings
\[A^*(B)/ \langle \sigma_*(c_r(V) \cdot \alpha_i)) \rangle \rightarrow A^*(X \smallsetminus \sigma'(\iota(K))),\]
which is an isomorphism when $\phi$ is a surjection of vector bundles.
\end{lem}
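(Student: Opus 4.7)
The plan is to combine excision with a projection-formula computation. Setting $p = \sigma'\circ\iota$ and $Z = p(K) \subseteq X$, excision gives the exact sequence $A_*(Z) \to A_*(X) \to A_*(X \smallsetminus Z) \to 0$, and by the projection formula $p_*A_*(K)$ is an $A^*(X)$-submodule of $A_*(X)$, i.e.\ an ideal. Thus it suffices to show that this ideal equals (or at least contains) the ideal $I \subseteq A^*(X)$ generated by the classes $\rho^*\sigma_*(c_r(V) \cdot \alpha_i)$. Since $\rho$ is a vector bundle, $\rho^*: A^*(B) \to A^*(X)$ is an isomorphism by homotopy invariance, which allows one to rewrite $A^*(X)/I$ as $A^*(B)/\langle \sigma_*(c_r(V) \cdot \alpha_i)\rangle$ at the end.

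For the containment, I would use the codimension-$r$ hypothesis to conclude that $K$ is cut out in $\sigma^*X$ by a regular section of $(\rho')^*V$, so that $[K] = (\rho')^*c_r(V) \in A^r(\sigma^*X)$. Setting $q = \rho'\circ\iota: K \to \widetilde{B}$ and picking $\alpha \in A^*(\widetilde{B})$, the projection formula for $\iota$ gives
\[
\iota_*(q^*\alpha) \;=\; \iota_*\iota^*(\rho')^*\alpha \;=\; (\rho')^*\alpha \cdot [K] \;=\; (\rho')^*\bigl(c_r(V)\cdot \alpha\bigr).
\]
Flat base change applied to the right-hand cartesian square of the trapezoid diagram (with $\rho$ flat and $\sigma$ proper) then yields $(\sigma')_*(\rho')^* = \rho^*\sigma_*$. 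Composing, $p_*(q^*\alpha) = \rho^*\sigma_*(c_r(V)\cdot\alpha)$. Letting $\alpha$ range over $A^*(B)$-module generators $\alpha_i$ of $A^*(\widetilde{B})$ then shows $I \subseteq p_*A_*(K)$, which is the containment half of the lemma.

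To upgrade this to equality when $\phi$ is a surjection of vector bundles, observe that then $K$ is the total space of $\ker\phi$ over $\widetilde{B}$, so $q: K \to \widetilde{B}$ is itself a vector bundle and $q^*: A^*(\widetilde{B}) \to A^*(K)$ is an isomorphism by homotopy invariance. Consequently every class on $K$ has the form $q^*\alpha$, and the previous computation therefore exhausts $p_*A_*(K)$. The step I expect to require the most care is the invocation of flat base change in the Artin-stack (and eventually equivariant) setting the paper operates in; once that identity is in hand, the remainder of the argument is projection-formula bookkeeping combined with homotopy invariance for vector bundles.
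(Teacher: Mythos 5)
Your proposal is correct and follows essentially the same route as the paper's proof: identifying $[K]=(\rho')^*c_r(V)$, pushing forward classes of the form $(\rho'')^*\alpha$ via the projection formula and compatibility of flat pullback with proper pushforward, and using that $K$ is a vector bundle over $\widetilde{B}$ when $\phi$ is surjective so that every class on $K$ is pulled back. No gaps.
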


\begin{proof}
The pullback maps $(\rho')^*$ and $\rho^*$ are isomorphisms on Chow rings.  The fundamental class of $K$ in $\sigma^*X$ is $(\rho')^*c_r(V)$, since it is defined by the vanishing of a section of $(\rho')^*V$. Consider classes in $A^*(K)$ of the form $(\rho'')^*\alpha$, where $\alpha\in A^*(\tilde{B})$. The effect of $(\sigma' \circ \iota)_*$ on such classes is
\begin{equation} \label{effect}
\sigma'_* \iota_* (\rho'')^* \alpha = \sigma'_* \iota_* \iota^* (\rho')^* \alpha = \sigma'_*([K] \cdot (\rho')^*\alpha) = \sigma'_*(\rho')^*(c_r(V) \cdot \alpha) = \rho^*\sigma_*(c_r(V) \cdot \alpha).
\end{equation}
The last step uses that flat pull back and proper push forward commute in fiber diagram.

If $\alpha = \sum_i (\sigma^*\beta_i) \cdot \alpha_i$, then the projection formula gives
\[\rho^*\sigma_*(c_r(V) \cdot \alpha) = \sum_i \rho^*(\beta_i) \cdot \rho^*(\sigma_*(c_r(V) \cdot \alpha_i)).\]
If $K$ is a vector bundle, then \emph{every} class in $A^{*}(K)$ has the form $(\rho'')^* \alpha$ for some $\alpha \in A^{*}(\widetilde{B})$. 
Thus, if $K$ is a vector bundle, the image of $(\sigma' \circ \iota)_*$ is generated over $A^*(X) \cong \rho^*A^*(B)$ by the classes $\rho^*(\sigma_*(c_r(V) \cdot \alpha_i))$, as $\alpha_i$ runs over generators for $A^*(\widetilde{B})$ as a $A^*(B)$-module.
\end{proof}

\subsection{The Hurwitz space} \label{hssec}
Given a scheme $S$, an $S$ point of the \emph{parametrized Hurwitz scheme} $\H_{k,g}^\dagger$ is the data of a finite, flat map $C \to \pp^1 \times S$, of constant degree $k$ so that the composition $C \to \pp^1 \times S \to S$ is smooth with geometrically connected fibers. (We do not impose the condition that a cover $C \to \pp^1$ be simply branched.)

The \emph{unparametrized Hurwitz stack} is the $\PGL_2$ quotient of the parametrized Hurwitz scheme. 
There is also a natural action of $\SL_2$ on $\H_{k,g}^\dagger$ (via $\SL_2 \subset \GL_2 \to \PGL_2$). The natural map $[\H_{k,g}^\dagger/\SL_2] \to [\H_{k,g}^\dagger/\PGL_2]$ is a $\mu_2$ banded gerbe.  It is a general fact that \emph{with rational coefficients}, the pullback map along a gerbe banded by a finite group is an isomorphism \cite[Section 2.3]{PV}. In particular, since we work with rational coefficients throughout, $A^*([\H_{k,g}^\dagger/\PGL_2]) \cong A^*([\H^\dagger_{k,g}/\SL_2])$. It thus suffices to prove all statements for the $\SL_2$ quotient $[\H^\dagger_{k,g}/\SL_2]$, which we denote by $\H_{k,g}$ from now on.

Explicitly, $\H_{k,g} = [\H_{k,g}^\dagger/\SL_2]$ is the stack whose objects over a scheme $S$ are families $(C\rightarrow P\rightarrow S)$ where $P = \pp V \rightarrow S$ is the projectivization of a rank $2$ vector bundle $V$ with trivial determinant, $C\rightarrow P$ is a finite flat finitely presented morphism of constant degree $k$, and the composition $C\rightarrow S$ has smooth fibers of genus $g$.
The benefit of working with $\H_{k,g}$ is that the $\SL_2$ quotient is equipped with a universal $\pp^1$-bundle $\P \to \H_{k, g}$ that has a relative degree one line bundle $\O_{\P}(1)$ (a $\pp^1$-fibration does not).
Working with this $\pp^1$-bundle simplifies our intersection theory calculations.

We shall also work with $\Hp_{k,g}^{\nf}$, the open substack of $\Hp_{k,g}$ parametrizing covers that do not factor through a lower genus curve. When $k$ is prime, $\Hp_{k,g}^{\nf}=\Hp_{k,g}$.
In Section \ref{app-sec} of the paper, we will consider the open substack $\Hp^{s}_{k,g}\subset\Hp_{k,g}$, which parametrizes covers that are simply branched. Note that $\Hp^{s}_{k,g} \subseteq \H_{k,g}^{\mathrm{nf}}$.

\section{Relative bundles of principal parts} \label{partsec}
In this section, we collect some background on bundles of principal parts, which will be used to produce relations among tautological classes in Sections \ref{RE3}, \ref{re4}, \ref{re5}, and to compute classes of certain ramification strata in Section \ref{app-sec}.
For the basics, we follow the exposition in Eisenbud-Harris \cite{EH}. 

\subsection{Basic properties} Let $b:Y\rightarrow Z$ be a smooth proper morphism. Let $\Delta_{Y/Z}\subset Y \times_Z Y$ be the relative diagonal. With $p_1$ and $p_2$ the projection maps, we obtain the following commutative diagram:
\[
\begin{tikzcd}
\Delta_{Y/Z} \arrow[rd] \arrow[rrd, bend left] \arrow[rdd, bend right] &                                      &                  \\
                                                                 & Y \times_{Z} Y \arrow[d, "p_2"'] \arrow[r, "p_1"] & Y \arrow[d, "b"] \\
                                                                 & Y \arrow[r, "b"']                    & Z.               
\end{tikzcd}
\]
\begin{definition}
Let $\W$ be a vector bundle on $Y$ and let $\mathcal{I}_{\Delta_{Y/Z}}$ denote the ideal sheaf of the diagonal in $Y \times_Z Y$. The bundle of relative $m^{\text{th}}$ order principal parts $P^m_{Y/Z}(\W)$ is defined as
\[
P^m_{Y/Z}(\W)=p_{2*}(p_1^*\W\otimes \mathcal{O}_{Y \times_Z Y}/\mathcal{I}_{\Delta_{Y/Z}}^{m+1}).
\]
\end{definition}
The following explains all the basic properties of bundles of principal parts that we need.
Parts (1) and (2) are Theorem 11.2 in \cite{EH}. Let $m\Delta_{Y/Z}$ be the closed subscheme of $Y\times_Z Y$ defined by the ideal sheaf $\mathcal{I}_{\Delta_{Y/Z}}^m$. Part (3) below follows because the restriction of $p_2$ to the thickened diagonal $m\Delta_{Y/Z} \to Y$ is finite, so the push forward is exact.
\begin{prop}\label{parts}
With notation as above,
\begin{enumerate}
    \item The quotient map $p_1^*\W\rightarrow p_1^*\W\otimes \mathcal{O}_{Y \times_Z Y}/\mathcal{I}_{\Delta_{Y/Z}}^{m+1}$ pushes forward to a map
    \[
    b^*b_*\W\cong p_{2*}p_1^*\W\rightarrow P^{m}_{Y/Z}(\W),
    \]
    which, fiber by fiber, associates to a global section $\delta$ of $\W$ a section $\delta'$ whose value at $z\in Z$ is the restriction of $\delta$ to an $m^{\text{th}}$ order neighborhood of $z$ in the fiber $b^{-1}b(z)$.
    \item \label{filtration} $P^0_{Y/Z}(\W)=\W$. For $m>1$, the filtration of the fibers $P^m_{Y/Z}(\W)_y$ by order of vanishing at $y$ gives a filtration of $P^m_{Y/Z}(\W)$ by subbundles that are kernels of the natural surjections
    $P^m_{Y/Z}(\W)\rightarrow P^k_{Y/Z}(\W)$ for $k<m$. The graded pieces of the filtration are identified by the exact sequences
    \[
    0\rightarrow \W\otimes \Sym^m(\Omega_{Y/Z})\rightarrow P^m_{Y/Z}(\W)\rightarrow P^{m-1}_{Y/Z}(\W)\rightarrow 0.
    \]
    \item A short exact sequence $0 \rightarrow K \to \W \to \W'\rightarrow 0$ of vector bundles on $Y$ induces an exact sequence of principal parts bundles 
    \[0 \to P^m_{Y/Z}(K) \to P^m_{Y/Z}(\W) \to P^m_{Y/Z}(\W') \to 0\]
\end{enumerate}
\end{prop}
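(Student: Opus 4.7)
The plan is to treat parts (1) and (2) as standard applications of flat base change and the structure of infinitesimal neighborhoods of the diagonal, and to give the main argument for part (3), where some brief exactness verifications are required.

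For (1), I would start with the tautological surjection $p_1^*\W \twoheadrightarrow p_1^*\W \otimes \O_{Y\times_Z Y}/\I_{\Delta_{Y/Z}}^{m+1}$ and push forward along $p_2$. Flat base change, applicable since $b$ is smooth and hence flat, identifies $p_{2*}p_1^*\W$ with $b^*b_*\W$, yielding the desired map. The fiberwise description follows by base-changing the square to $\Spec \kappa(z)$: the scheme-theoretic fiber via $p_2$ of $m\Delta_{Y/Z}$ over a point $y \in b^{-1}(z)$ is precisely the $m$th order infinitesimal neighborhood of $y$ in $b^{-1}(z)$, so the induced map records the restriction of $\delta$ to that neighborhood. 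For (2), the case $m=0$ is immediate since $\O_{Y\times_Z Y}/\I_{\Delta_{Y/Z}}$ is the structure sheaf of the diagonal, on which $p_2$ is an isomorphism. In general, I would tensor the standard short exact sequence
\[0 \to \I_{\Delta_{Y/Z}}^m/\I_{\Delta_{Y/Z}}^{m+1} \to \O_{Y\times_Z Y}/\I_{\Delta_{Y/Z}}^{m+1} \to \O_{Y\times_Z Y}/\I_{\Delta_{Y/Z}}^m \to 0\]
with $p_1^*\W$ (exact because $p_1^*\W$ is locally free) and then push forward along $p_2$; the pushforward is exact because all three sheaves are supported on $m\Delta_{Y/Z}$, which is finite, hence affine, over $Y$ via $p_2$. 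Smoothness of $b$ gives the standard identification $\I_{\Delta_{Y/Z}}/\I_{\Delta_{Y/Z}}^2 \cong \Omega_{Y/Z}$, whence $\I_{\Delta_{Y/Z}}^m/\I_{\Delta_{Y/Z}}^{m+1} \cong \Sym^m \Omega_{Y/Z}$, identifying the graded piece.

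For (3), I would pull the short exact sequence $0 \to K \to \W \to \W' \to 0$ back along $p_1$, which is flat as the base change of the flat map $b$, to obtain an exact sequence of locally free sheaves on $Y \times_Z Y$. Tensoring with $\O_{Y\times_Z Y}/\I_{\Delta_{Y/Z}}^{m+1}$ is only right exact in general, but because $p_1^*\W'$ is locally free the relevant $\mathrm{Tor}_1$ vanishes, so exactness is preserved. The three tensored sheaves are supported on $m\Delta_{Y/Z}$, and the restriction of $p_2$ to this support is finite, hence affine, so the pushforward is exact on this subcategory. This delivers the claimed short exact sequence of principal parts bundles. The only subtlety is tracking these two exactness checks, but once the flatness and local-freeness hypotheses are in place the verification is routine, so I do not anticipate a genuine obstacle.
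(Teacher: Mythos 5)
Your proposal is correct and follows essentially the same route as the paper: parts (1) and (2) are the standard arguments from Eisenbud--Harris (Theorem 11.2), and for part (3) you use exactly the paper's key observation that $p_2$ restricted to the thickened diagonal $m\Delta_{Y/Z}$ is finite, so the pushforward is exact; your Tor-vanishing check for the tensoring step is a correct (and slightly more explicit) supplement to that. No gaps.
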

We will need to know when the map from part (1) is surjective.
\begin{lem}\label{veryample}
Suppose $\W$ is a relatively very ample line bundle on $Y$. Then the evaluation map
$
b^*b_*\W\rightarrow P^1_{Y/Z}(\W)$
is surjective. 
\end{lem}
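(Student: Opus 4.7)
The plan is to check surjectivity of this map of coherent sheaves fiber by fiber via Nakayama's lemma, reducing to the classical assertion that a very ample line bundle separates tangent vectors on each fiber. As a first step, I would combine the evaluation map with the filtration short exact sequence from Proposition~\ref{parts}(\ref{filtration}) to obtain a commutative diagram
\[
\begin{tikzcd}
0 \arrow{r} & K \arrow{r} \arrow{d} & b^*b_*\W \arrow{r}{\mathrm{ev}} \arrow{d} & \W \arrow{r} \arrow{d}{\id} & 0 \\
0 \arrow{r} & \W \otimes \Omega_{Y/Z} \arrow{r} & P^1_{Y/Z}(\W) \arrow{r} & \W \arrow{r} & 0,
\end{tikzcd}
\]
where the middle vertical arrow is the evaluation map in the statement and $K$ denotes the kernel of $\mathrm{ev}$. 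The map $\mathrm{ev}$ is surjective because very ampleness implies global generation on each fiber, so the snake lemma reduces the problem to showing that the left vertical arrow $K \to \W \otimes \Omega_{Y/Z}$ is surjective.

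To check surjectivity of $K \to \W \otimes \Omega_{Y/Z}$, I would fix a closed point $y \in Y$, set $z = b(y)$, and write $Y_z := b^{-1}(z)$ for the smooth fiber through $y$. The fiber of $\W \otimes \Omega_{Y/Z}$ at $y$ is canonically $\W|_y \otimes \mathfrak{m}_y/\mathfrak{m}_y^2$, where $\mathfrak{m}_y \subset \O_{Y_z,y}$ is the maximal ideal. Using the natural base change map for $b_*$, the fiber of $K$ at $y$ is identified with the subspace of $H^0(Y_z, \W|_{Y_z})$ of sections vanishing at $y$, and the left vertical arrow restricts to the classical ``take the differential at $y$'' map $s \mapsto ds(y)$. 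Surjectivity of this map at every such $y$ is precisely the statement that $\W|_{Y_z}$ separates tangent vectors, which is part of the standard characterization of very ampleness.

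The main technical obstacle is justifying the identification of $K \otimes k(y)$ with the space of sections of $\W|_{Y_z}$ vanishing at $y$, i.e., the surjectivity of the base change map $(b_*\W)_z \otimes k(z) \to H^0(Y_z, \W|_{Y_z})$. This is a standard cohomology-and-base-change issue, resolved as soon as $R^1b_*\W$ is locally free or zero; in the settings where this lemma is applied in the paper (line bundles of sufficiently high relative degree on fiberwise curves), $R^1b_*\W$ will vanish and this identification is automatic.
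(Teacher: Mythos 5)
Your proposal is correct and follows essentially the same route as the paper, which simply checks surjectivity fiber by fiber and invokes the fact that a very ample line bundle separates points and tangent vectors; your diagram chase with the jet filtration is just a more explicit version of that reduction. The base-change ``obstacle'' you flag at the end is actually avoidable without any hypothesis on $R^1b_*\W$: relative very ampleness gives a relative immersion $Y \hookrightarrow \pp(b_*\W)$ over $Z$, so on each fiber the (possibly incomplete) linear subsystem spanned by the image of $(b_*\W)\otimes k(z)$ in $H^0(Y_z, \W|_{Y_z})$ already defines an embedding and hence separates tangent vectors, which is all the fiberwise check requires.
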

\begin{proof}
The statement can be checked fiber by fiber over $Z$. Then, it follows from the fact that very ample line bundles separate points and tangent vectors.
\end{proof}

Together with the above lemma, the following two lemmas will help us establish when evaluation maps are surjective in our particular setting.

\begin{lem} \label{line}
Let $E = \O(e_1) \oplus \cdots \oplus \O(e_r)$ be a vector bundle on $\pp^1$ with $e_1 \leq \cdots \leq e_r$ and let $\gamma: \pp E^\vee \to \pp^1$ be the projectivization. The line bundle $L =\gamma^*\O_{\pp^1}(a) \otimes  \O_{\pp E^\vee}(m)$ is very ample if and only if $m \geq 1$ and $a + me_1 \geq 1$, equivalently if and only if $h^1(\pp^1, \gamma_*L \otimes \O_{\pp^1}(-2)) = 0$.
\end{lem}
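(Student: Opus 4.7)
The plan is to first record the explicit decomposition of $\gamma_* L$ and then handle the two equivalences in turn. In the subspace convention used in the paper, $\gamma_* \O_{\pp E^\vee}(m) = \Sym^m E$, so
\[
\gamma_* L = \Sym^m E \otimes \O_{\pp^1}(a) = \bigoplus_{i_1 + \cdots + i_r = m} \O\bigl(a + i_1 e_1 + \cdots + i_r e_r\bigr),
\]
whose minimum-degree summand is $\O(a + m e_1)$. Thus, in the relevant range $m \geq 1$, $h^1(\pp^1, \gamma_* L \otimes \O(-2)) = 0$ is equivalent to every summand of $\gamma_* L$ having degree $\geq 1$, i.e.\ to $a + m e_1 \geq 1$; this disposes of the ``equivalently'' part.

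For necessity I would restrict $L$ to two distinguished subvarieties of $\pp E^\vee$. A fiber of $\gamma$ is a $\pp^{r-1}$ on which $L$ restricts to $\O(m)$, forcing $m \geq 1$. Next, the rank-one quotient $E \twoheadrightarrow \O(e_1)$ killing all but the smallest summand determines a section $\sigma_1 \colon \pp^1 \to \pp E^\vee$, which is a closed immersion and along which $\O_{\pp E^\vee}(1)$ pulls back to $\O(e_1)$. Hence $\sigma_1^* L = \O(a + m e_1)$, and very ampleness on $\pp^1$ forces $a + m e_1 \geq 1$.

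For sufficiency I assume $m \geq 1$ and $a + m e_1 \geq 1$ and reduce to a Segre embedding in two steps. First, the relative $m$-uple Veronese $\pp E^\vee \hookrightarrow \pp_{\pp^1}(\Sym^m E)^\vee$ is a closed immersion pulling the relative $\O(1)$ back to $\O_{\pp E^\vee}(m)$, so it suffices to treat the $m = 1$ case with base bundle $\Sym^m E$, whose smallest summand is $\O(m e_1)$. Second, using the canonical identification $\pp(F \otimes N)^\vee = \pp F^\vee$ under which $\O(1)$ twists by $\gamma^* N$, I absorb the $\gamma^* \O(a)$ twist into the base bundle, reducing to the claim: if $F = \bigoplus_i \O(d_i)$ on $\pp^1$ with all $d_i \geq 1$, then $\O_{\pp F^\vee}(1)$ is very ample. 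Setting $G := F \otimes \O(-1)$, each summand of $G$ has degree $\geq 0$, so $G$ is globally generated by $V := H^0(\pp^1, G)$, and the dual inclusion $G^\vee \hookrightarrow V^\vee \otimes \O_{\pp^1}$ yields a closed immersion
\[
\pp F^\vee = \pp G^\vee \hookrightarrow \pp(V^\vee \otimes \O_{\pp^1}) = \pp V^\vee \times \pp^1.
\]
Under this immersion the Segre-very-ample $\O(1,1)$ pulls back to $\O_{\pp G^\vee}(1) \otimes \gamma^* \O(1) = \O_{\pp F^\vee}(1)$, so $\O_{\pp F^\vee}(1)$ is the restriction of a very ample line bundle along a closed immersion, hence very ample.

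The main obstacle is purely bookkeeping: tracking the twist identity $\O_{\pp(F \otimes N)^\vee}(1) = \O_{\pp F^\vee}(1) \otimes \gamma^* N$ through the reductions and maintaining the subspace-convention identification $\gamma_* \O_{\pp E^\vee}(m) = \Sym^m E$. Once these are pinned down, the geometric content amounts simply to the fact that a sufficiently positive $\pp^{r-1}$-bundle over $\pp^1$ embeds into $\pp^{N-1} \times \pp^1$ via the Segre.
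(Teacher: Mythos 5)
Your proof is correct and follows essentially the same route as the paper: reduce via the relative $m$-uple Veronese to the case of $\O(1)$ on the split projective bundle $\pp(\O(a)\otimes\Sym^m E)^\vee$, and then invoke the criterion that this is very ample exactly when every summand of $\O(a)\otimes\Sym^m E=\gamma_*L$ has positive degree. The only difference is that the paper cites Eisenbud--Harris (Section 9.1.1) for that criterion, whereas you prove it directly --- the Segre embedding $\pp F^\vee\hookrightarrow \pp V^\vee\times\pp^1$ for sufficiency, and restriction to a fiber and to the minimal section for necessity --- which is a perfectly valid (and self-contained) substitute.
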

\begin{proof}
First, note that $L$ is the pullback of $\O(1)$ under a degree $m$ relative Veronese embedding $\pp E^\vee \hookrightarrow \pp(\O(a) \otimes \Sym^mE)^\vee$. 
The $\O(1)$ on the projective bundle $\pp(\O_{\pp^1}(a) \otimes \Sym^mE)^\vee$ is very ample if and only if all summands of $\O_{\pp^1}(a) \otimes \Sym^mE = \gamma_*L$ have positive degree (see  \cite[Section 9.1.1]{EH}). These summands have degrees of the form $a + e_{i_1} + \ldots + e_{i_m}$, all of which are at least $a + me_1$.
\end{proof}

\begin{lem}  \label{famjet}
Suppose $\E$ is a vector bundle on a $\pp^1$-bundle $\pi: \P \to B$ and let $\gamma: \pp \E^\vee \to \P$ be the projectivization. Suppose $\W = (\gamma^*A) \otimes \O_{\pp \E^\vee}(m)$ for some $m \geq 1$ and vector bundle $A$ on $\P$.
If $R^1\pi_*[\gamma_*\W \otimes \O_{\P}(-2)] = 0$, then the evaluation map
\[(\pi \circ \gamma)^*(\pi \circ \gamma)_* \W \to P^1_{\pp \E^\vee/B}(\W)\]
is surjective.
\end{lem}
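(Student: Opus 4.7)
The plan is to check surjectivity fiberwise over $B$ and then apply Nakayama's lemma. Fix a point $b \in B$ and set $F_b = (\pi \circ \gamma)^{-1}(b) = \pp(\E^\vee|_{\pp^1_b})$, a projective bundle over the fiber $\pp^1_b = \pi^{-1}(b)$. The first task is to verify that both sides of the evaluation map commute with base change to $b$. For the principal parts bundle this is automatic since the defining push forward is along a proper morphism. For $(\pi \circ \gamma)_*\W = \pi_*(\gamma_* \W)$, the hypothesis $R^1\pi_*[\gamma_*\W \otimes \O_{\P}(-2)] = 0$ is strictly stronger than (and hence implies) $R^1\pi_*(\gamma_*\W) = 0$, because each summand of $(\gamma_*\W)|_{\pp^1_b}$ gains degree $2$ compared to the corresponding summand of $(\gamma_*\W \otimes \O_\P(-2))|_{\pp^1_b}$. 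So cohomology and base change applies to $\pi_*(\gamma_*\W)$. Combined with $R^1\gamma_*\W = 0$ (which holds since $m \ge 1$), this gives $((\pi \circ \gamma)_*\W) \otimes k(b) = H^0(F_b, \W|_{F_b})$.

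On the fiber $\pp^1_b$, the vector bundle $A|_{\pp^1_b}$ splits as $\bigoplus_i \O(a_i)$, inducing a decomposition $\W|_{F_b} = \bigoplus_i L_i$ with $L_i = \gamma^*\O_{\pp^1_b}(a_i) \otimes \O_{F_b}(m)$. By Proposition \ref{parts}(3), the principal parts construction respects direct sums, and so the fiberwise evaluation map splits accordingly. It therefore suffices to show that each $L_i$ has a surjective evaluation map. By Lemma \ref{veryample} it is enough to show each $L_i$ is very ample, and by Lemma \ref{line} this is equivalent to $h^1(\pp^1_b, \gamma_*L_i \otimes \O_{\pp^1_b}(-2)) = 0$. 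Summing over $i$, the collective vanishing is exactly $h^1(\pp^1_b, (\gamma_*\W \otimes \O_{\P}(-2))|_{\pp^1_b}) = 0$, which is the base change of the hypothesis $R^1\pi_*[\gamma_*\W \otimes \O_{\P}(-2)] = 0$ to the fiber.

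The main technical point is setting up the cohomology and base change step properly to reduce to fibers of $B$; once this is in hand, the rest of the argument is a direct application of the splitting of vector bundles on $\pp^1$ combined with Lemmas \ref{veryample} and \ref{line} applied summand by summand. I do not expect any hidden difficulty beyond keeping track of the base changes, since the key positivity assumption has been packaged precisely to imply the very ampleness condition on every fiber.
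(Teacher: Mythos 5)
Your proof is correct and follows essentially the same route as the paper: reduce to fibers of $B$, split $A$ into line bundles, and apply Lemmas \ref{line} and \ref{veryample} summand by summand. The only difference is that you spell out the cohomology-and-base-change justification for the fiberwise reduction (correctly noting that the hypothesis forces every summand of $\gamma_*\W$ on a fiber to have degree at least $1$, hence $R^1\pi_*(\gamma_*\W)=0$), which the paper leaves implicit.
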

\begin{proof}
It suffices to check surjectivity in each of the fibers over $B$, so we are reduced to the case that $B$ is a point.
Now we may assume $A$ splits as a sum of line bundles, say $A \cong \O(a_1) \oplus \cdots \oplus \O(a_r)$. By cohomology and base change, we have $h^1(\pp^1, \gamma_*\W \otimes \O_{\pp^1}(-2)) = 0$, which implies $h^1(\pp^1, \gamma_*(\gamma^*\O(a_i) \otimes \O_{\pp E^\vee}(m)) \otimes \O(-2)) = 0$ for each $i$. By Lemma \ref{line}, we have that $\W$ is a sum of very ample line bundles (over $B$). The bundle of principal parts respects direct sums, so the evaluation map is surjective by Lemma \ref{veryample}. 
\end{proof}

The following lemma should be thought of as saying ``pulled back sections have vanishing vertical derivatives."

\begin{lem}\label{partspullback}
Let $X\xrightarrow{a}Y\xrightarrow{b}Z$ be a tower of schemes with $a$ and $b$ smooth, and let $\W$ be a vector bundle on $Y$. 
For each $m$ there is a natural map $a^*P^m_{Y/Z}(\W)\rightarrow P^m_{X/Z}(a^*\W)$. This map fits in an exact sequence
\[
0\rightarrow a^*P^m_{Y/Z}(\W)\rightarrow P^m_{X/Z}(a^*\W)\rightarrow F_m\rightarrow 0,
\]
where $F_1\cong \Omega_{X/Y}\otimes a^*\W$ and $F_m$ for $m>1$ is filtered as
\[ 
0\rightarrow \Sym^{m-1}\Omega_{X/Z}\otimes \Omega_{X/Y}\otimes a^*\W\rightarrow F_m\rightarrow F_{m-1}\rightarrow 0.
\]
In particular, the evaluation map 
\[
b^*b_*\W\rightarrow P^m_{Y/Z}(\W)
\]
gives rise to a composition
\[
a^*b^*b_*\W\rightarrow a^*P^m_{Y/Z}(\W)\rightarrow P^m_{X/Z}(a^*\W),
\]
which, fiber by fiber, gives the Taylor expansion of sections of $\W$ along the ``horizontal" pulled back directions.
\end{lem}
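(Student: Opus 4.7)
The plan is to construct the map directly from the geometry of diagonals, then proceed by induction on $m$ using the graded filtration of Proposition \ref{parts}(2). To construct the map, first observe the closed immersions $\Delta_{X/Z}\subseteq X\times_Y X\subseteq X\times_Z X$ (the middle one since $(x,x)$ satisfies $a(x)=a(x)$), and then apply flat base change along $a$: since $(a\times_Z a)^{-1}(\Delta_{Y/Z})=X\times_Y X$ scheme-theoretically and the second projection from the thickened diagonal is finite, one identifies $a^*P^m_{Y/Z}(\W)$ with $q_{2*}(q_1^*a^*\W\otimes \O_{X\times_Z X}/\I_{X\times_Y X}^{m+1})$. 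The natural surjection $\O/\I_{X\times_Y X}^{m+1}\twoheadrightarrow \O/\I_{\Delta_{X/Z}}^{m+1}$, after tensoring with $q_1^*a^*\W$ and pushing forward by $q_{2*}$, yields the desired map $a^*P^m_{Y/Z}(\W)\to P^m_{X/Z}(a^*\W)$.

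For the exact sequence, I would proceed by induction on $m$, comparing the graded sequences of Proposition \ref{parts}(2) for $a^*P^m_{Y/Z}(\W)$ and $P^m_{X/Z}(a^*\W)$. The left vertical map is $a^*\W$ tensored with the $m$-th symmetric power of the cotangent sequence $0\to a^*\Omega_{Y/Z}\to \Omega_{X/Z}\to \Omega_{X/Y}\to 0$ (exact because $a$ is smooth), and is injective since symmetric powers preserve inclusions of vector bundles. For $m=1$ the left cokernel is exactly $\Omega_{X/Y}\otimes a^*\W$, so the snake lemma immediately gives injectivity of the middle and $F_1\cong \Omega_{X/Y}\otimes a^*\W$. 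For $m>1$ the snake lemma yields injectivity of $a^*P^m_{Y/Z}(\W)\to P^m_{X/Z}(a^*\W)$ together with a short exact sequence $0\to C_m\otimes a^*\W\to F_m\to F_{m-1}\to 0$, where $C_m$ is the cokernel of $\Sym^m a^*\Omega_{Y/Z}\hookrightarrow \Sym^m\Omega_{X/Z}$. One then identifies $C_m$ with the image of the symmetric multiplication map $\Sym^{m-1}\Omega_{X/Z}\otimes \Omega_{X/Y}\to \Sym^m\Omega_{X/Z}$ (namely the ideal of degree-$m$ monomials containing at least one $\Omega_{X/Y}$ factor), giving the stated bottom piece of the filtration of $F_m$.

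Finally, the Taylor expansion interpretation is immediate from the construction: the composition $a^*b^*b_*\W\to a^*P^m_{Y/Z}(\W)\to P^m_{X/Z}(a^*\W)$ first records the $m$-th order expansion of a section of $\W$ along the fiber of $b$ through $a(x)$ and then re-expands it along the fiber of $ba$ through $x$; the two expansions agree in horizontal $a^*\Omega_{Y/Z}$ directions by naturality, while the vertical $\Omega_{X/Y}$ derivatives (captured by $F_m$) vanish because no vertical input was supplied. The main obstacle will be the careful bookkeeping of the symmetric-power filtration across the induction and verifying that the snake-lemma connecting map vanishes at each stage, so that one indeed obtains a short exact sequence as claimed rather than a longer one.
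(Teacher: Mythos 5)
Your treatment of the exact sequence is essentially the paper's own argument: compare the vanishing-order filtrations of $a^*P^m_{Y/Z}(\W)$ and $P^m_{X/Z}(a^*\W)$, note that the left vertical map is $\Sym^m$ of the inclusion $a^*\Omega_{Y/Z}\hookrightarrow\Omega_{X/Z}$ tensored with $a^*\W$ (hence injective with locally free cokernel), and run the snake lemma inductively. One small remark: the "main obstacle" you flag at the end is not an obstacle at all --- the connecting map of the snake lemma goes from the kernel of the right vertical map to the cokernel of the left one, and the right vertical map is injective by the inductive hypothesis, so the connecting map vanishes automatically.

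The genuine gap is in the construction of the map itself. The square with $a\times_Z a\colon X\times_Z X\to Y\times_Z Y$ on top and $a\colon X\to Y$ on the bottom (via second projections) is \emph{not} Cartesian --- the fiber product of $Y\times_Z Y\to Y\leftarrow X$ is $Y\times_Z X$, not $X\times_Z X$ --- so flat base change does not identify $a^*P^m_{Y/Z}(\W)$ with $q_{2*}(q_1^*a^*\W\otimes\O_{X\times_Z X}/\I_{X\times_Y X}^{m+1})$. Relatedly, your finiteness claim fails: the thickening of $X\times_Y X$ inside $X\times_Z X$ projects to $X$ with fibers containing the fibers of $a$, so it is finite only when $a$ is (in the paper's applications $a$ is a projective or Grassmann bundle). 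Already for $m=0$ your right-hand side computes $a^*(\W\otimes a_*\O_X)$-type objects rather than $a^*\W$, and for an affine bundle it is not even coherent. The correct use of flat base change identifies $a^*P^m_{Y/Z}(\W)$ with the pushforward from $Y\times_Z X$ of $\O/\I^{m+1}_{\Gamma_a}$, where $\Gamma_a\cong X$ is the graph of $a$; one then still needs a comparison map into $P^m_{X/Z}(a^*\W)$, which is exactly the nontrivial content. The paper supplies it by the chain of natural transformations $a^*q_{1*}\to p_{1*}(a\times a)^*$ together with the unit $\O_{Y\times_Z Y}\to(a\times a)_*\O_{X\times_Z X}$ and adjunction, using the containment $\Delta_{X/Z}\subseteq(a\times a)^{-1}(\Delta_{Y/Z})$ of ideals that you correctly observed; your proof needs this (or the equivalent map induced by $a\times\mathrm{id}\colon X\times_Z X\to Y\times_Z X$) in place of the invalid base-change identification.
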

\begin{proof}
We begin by constructing the map $a^*P^m_{X/Z}(\W)\rightarrow P^m_{Y/Z}(a^*\W)$. Consider the following commutative diagram:
\[
\begin{tikzcd}
  & X \arrow[d, "a"'] \arrow[ld] & X\times_{Z} X \arrow[l, "p_2"'] \arrow[r, "p_1"] \arrow[d, "a\times a"] & X \arrow[d, "a"] \arrow[rd] &   \\
Z & Y \arrow[l, "b"]             & Y\times_{Z} Y \arrow[r, "q_1"'] \arrow[l, "q_2"]                        & Y \arrow[r, "b"']           & Z
\end{tikzcd}
\]
Let $\Delta_{Y}\subset Y\times_{Z} Y$ denote the relative diagonal, and similarly for $\Delta_{X}\subset X\times_{Z} X$. By definition, we have
\[
a^*P^m_{Y/Z}(\W)=a^*(q_{1*}(\mathcal{O}_{Y\times_{Z} Y}/\I^{m+1}_{\Delta_Y}\otimes q_2^*\W)).
\]
The natural transformation of functors $a^*q_{1*}\rightarrow p_{1*}(a\times a)^*$ induces a map
\[
a^*P^m_{Y/Z}(\W)\rightarrow p_{1*}((a\times a)^*(\mathcal{O}_{Y\times_{Z} Y}/\I^{m+1}_{\Delta_Y})\otimes (a\times a)^*q_2^*\W)).
\]
The transform $(q_2\circ (a\times a))^*\rightarrow (a\circ p_2)^*$ induces a map
\[
p_{1*}((a\times a)^*(\mathcal{O}_{Y\times_{Z} Y}/\I^{m+1}_{\Delta_Y})\otimes (a\times a)^*q_2^*\W))\rightarrow p_{1*}((a\times a)^*(\mathcal{O}_{Y\times_{Z} Y}/\I^{m+1}_{\Delta_Y})\otimes p_2^*a^*\W).
\]
The natural morphism of sheaves $\mathcal{O}_{Y\times_{Z} Y}\rightarrow (a\times a)_*\mathcal{O}_{X\times_Z X}$ induces a map on quotients $\mathcal{O}_{Y\times_{Z} Y}/\I^{m+1}_{\Delta_Y}\rightarrow (a\times a)_*(\mathcal{O}_{X\times_Z X}/\I^{m+1}_{\Delta_X}).$
By adjunction, we obtain a map
\[
(a\times a)^*(\mathcal{O}_{Y\times_{Z} Y}/\I^{m+1}_{\Delta_Y}) \rightarrow \mathcal{O}_{X\times_Z X}/\I^{m+1}_{\Delta_X}.
\]
Then we have a morphism
\[
p_{1*}((a\times a)^*(\mathcal{O}_{Y\times_{Z} Y}/\I^{m+1}_{\Delta_Y})\otimes p_2^*a^*\W)\rightarrow p_{1*}(\mathcal{O}_{X\times_{Z} X}/\I_{\Delta_X}^{m+1}\otimes p_2^*(a^*\W))=P^m_{X/Z}(a^*\W).
\]
By construction, the maps $a^*P^m_{Y/Z}(\W)\rightarrow P^m_{X/Z}(a^*\W)$ are compatible with the filtrations on the fibers by order of vanishing, so we obtain an induced map on the graded pieces of the filtrations:
\[
\begin{tikzcd}
0 \arrow[r] & \Sym^m (a^*\Omega_{Y/Z})\otimes a^*\W \arrow[r] \arrow[hook, d] & a^*P^m_{Y/Z}(\W) \arrow[r] \arrow[d] & a^*P^{m-1}_{Y/Z}(\W) \arrow[r] \arrow[d] & 0 \\
0 \arrow[r] & \Sym^m (\Omega_{X/Z})\otimes a^*\W \arrow[r]             & P^m_{X/Z}(a^*\W) \arrow[r]           & P^{m-1}_{X/Z}(a^*\W) \arrow[r]              & 0
\end{tikzcd}
\]
When $m = 1$, the right vertical map is the identity on $a^* \W$. Hence, 
$a^*P^1_{Y/Z}(\W)\rightarrow P^1_{X/Z}(a^*\W)$ is injective. By the snake lemma, the cokernel is isomorphic to the cokernel of the left vertical map, which in turn is
$\Omega_{X/Y}\otimes a^*\W$ because $a$ and $b$ are smooth and $\W$ is locally free.
For $m > 1$, we may assume by induction that
the right vertical map is injective, hence the center vertical map is injective.
The filtration of the cokernel $F_m$ of the center vertical map follows by induction and the snake lemma.
\end{proof}

\subsection{Directional refinements} \label{dir-ref}
Much of the exposition in this subsection is based on unpublished notes of Ravi Vakil. Suppose we have a tower $X \xrightarrow{a} Y \xrightarrow{b} Z$ and $a^* \Omega_{Y/Z}$ admits a filtration on $X$
\begin{equation} \label{basicfilt}
0 \rightarrow \Omega_y \rightarrow a^* \Omega_{Y/Z} \rightarrow \Omega_x \rightarrow 0.
\end{equation}
For example, take $X=\p(\Omega_{Y/Z})$ or $G(n, \Omega_{Y/Z})$ with the filtration given by the tautological sequence.
First, suppose $\Omega_x$ and $\Omega_y$ are rank $1$.
The filtration \eqref{basicfilt} is the same as saying we can choose local coordinates $x, y$ at each point of $Y$ where $y$ is well-defined up to $(x, y)^2$, and $x$ is only defined modulo $y$. The vanishing of $y$ defines a distinguished ``$x$-direction" on the tangent space $T_{Y/Z}$ at each point, which is dual to the quotient $a^*\Omega_{Y/Z} \to \Omega_x$.

The goal of this section is to define principal parts bundles that measure certain parts of a Taylor expansion with respect to these local coordinates. These principal parts bundles will be indexed by \textit{admissible sets} $S$ of monomials in $x$ and $y$ (defined below). 
If $x^iy^j \in S$, then $P^S_{Y/Z}(\W)$ will keep track of the coefficient of $x^iy^j$ in the Taylor expansion of a section of $\W$. For example,
 $S = \{1, x\}$ will correspond to a quotient of $a^*P^1_{Y/Z}(\W)$ that measures only derivatives in the $x$-direction. The set
$S = \{1, x, y, x^2, xy, y^2\}$ corresponds to the pullback of the usual second order principal parts. It is helpful to visualize these sets with diagrams as below, where we place a dot at coordinate $(i, j)$ if $x^i y^j \in S$.
\begin{center}
\begin{tikzpicture}[scale = .6]
\draw[->] (0, 0) -- (1, 0);
\node at (1.2, 0) {$i$}; 
\draw[->] (0, 0) -- (0, -1);
\node at (0, -1.3) {$j$};
\filldraw[color=white] (0, -2) circle (5pt);
\end{tikzpicture}
\hspace{1.5in}
\begin{tikzpicture}[scale = .6]
 \filldraw (0, 0) circle (5pt);
\filldraw (1, 0) circle (5pt);
\node at (0.5, 1) {$\{1, x\}$};
\filldraw[color=white] (0, -2) circle (5pt);
\end{tikzpicture}
\hspace{1.5in}
\begin{tikzpicture}[scale = .6]
 \filldraw (0, 0) circle (5pt);
\filldraw (0, -1) circle (5pt);
 \filldraw (1, 0) circle (5pt);
\filldraw (0, -2) circle (5pt);
 \filldraw (2, 0) circle (5pt);
\filldraw (1, -1) circle (5pt);
\node at (1.3, 1) {$\{1,x, y, x^2, xy, y^2\}$};
\end{tikzpicture}
\end{center}

More generally, if $\Omega_x$ and $\Omega_y$ have any ranks, the quotient $\Omega_x$ is dual to a distinguished subspace of $T_{Y/Z}$.
The construction below will build bundles $P^S_{Y/Z}(\W)$ such that if $x^i y^j \in S$, then $P^S_{Y/Z}(\W)$ tracks the coefficients of all monomials corresponding to $\Sym^i \Omega_x \otimes \Sym^j \Omega_y$. In other words, $P^S_{Y/Z}(\W)$ will admit a filtration with quotients $\Sym^i \Omega_x \otimes \Sym^j \Omega_y \otimes \W$ for each $(i, j)$ such that $x^iy^j \in S$. Each dot in the diagram corresponds to a piece of this filtration.
Only diagrams of certain shapes are allowed.

\begin{definition}
A set $S$ is \textit{admissible} if the following hold
\begin{itemize}
    \item If $x^i y^j \in S$, then $x^{i-1} y^{j} \in S$ (if $i - 1 \geq 0$). That is, for each dot in the diagram, the dot to its left is also in the diagram if possible.
    \item If $x^i y^j \in S$, then $x^{i-2} y^{j+1}$ (if $i - 2 \geq 0$). That is, for each dot in the diagram, the dot two to the left and one down is also in the diagram if possible.
\end{itemize}
Equivalently, the diagram associated to $S$ is built, via intersections and unions, from triangular collections of lattice points bounded by the axes and a line of slope $1$ or slope $\frac{1}{2}$.
\end{definition}

To build the principal parts bundles $P^S_{Y/Z}(W)$, let us consider the diagram
\begin{center}
\begin{tikzcd}
\widetilde{\Delta} \arrow{d}{\iota}  \arrow{r} &\Delta \arrow{d} \\
X \times_Z Y \arrow{d}[swap]{\tilde{p}_2} \arrow{r}{\tilde{a}} & Y \times_Z Y \arrow{d}[swap]{p_2}\arrow{r}{p_1} & Y \arrow{d}{b} \\
X \arrow{r}{a} &Y \arrow{r}{b} & Z, 
\end{tikzcd}
\end{center}
where $\Delta = \Delta_{Y/Z} \subset Y \times_Z Y$ is the diagonal and all squares are fibered squares. The composition of vertical maps give isomorphisms $\Delta \cong Y$ and $\widetilde{\Delta} \cong X$.
There is an identification $\iota_* \Omega_{\widetilde{\Delta}/Z} \cong \mathcal{I}_{\widetilde{\Delta}}/\mathcal{I}_{\widetilde{\Delta}}^2$.
Using \eqref{basicfilt} and the isomorphism $\widetilde{\Delta} \cong X$, we obtain an injection
\[\iota_* \Omega_y \to \iota_*a^* \Omega_{Y/Z} \to \iota_*\Omega_{X/Z} \cong \I_{\widetilde{\Delta}}/\I_{\widetilde{\Delta}}^2, \]
which determines a subsheaf $\mathcal{J} \subset \I_{\widetilde{\Delta}} =: \I$.
The sheaf $\I$ corresponds to the monomials $\{x^iy^j : i + j \geq 1\}$ (see \eqref{Ipow} below). The subsheaf $\mathcal{J}$ corresponds to the monomials $\{x^iy^j : i + j \geq 2 \text{ or } j \geq 1\}$ (see \eqref{Jpow} below). The condition $i + j \geq 2$ says $\I^2 \subset \mathcal{J}$. The condition $j \geq 1$ says $\mathcal{J} \subset \mathcal{I}$ and it ``picks out our $y$-coordinate(s) to first order."

In the next paragraph, we will explain how to construct an ideal $\I_S$, via intersections and unions of $\I$ and $\J$, corresponding to monomials not in $S$.
Our refined principal parts bundles will then be defined as
\[P^S_{Y/Z}(\W) := \tilde{p}_{2*}\left( \tilde{a}^* p_1^*  \W \otimes \O_{X \times_Z Y}/\mathcal{I}_S \right), \]
The bundle $P^S_{Y/Z}(\W)$ is defined on $X$ and will be a quotient of $a^*P^m_{Y/Z}(\W)$ for $m = \max\{i + j : x^iy^j \in S\}$. In particular, there are restricted evaluation maps
\begin{equation} \label{res-ref} a^*b^*b_* \W \to a^*P^m_{Y/Z}(\W) \to P^S_{Y/Z}(\W),
\end{equation}
which we think of as Taylor expansions only along certain directions specified by $S$.

To start, we shall have $\I_{\{1, x, y\}} := \I$ and $\I_{\{1, x\}} := \J$. Powers of these ideals correspond to regions below lines of slope $1$ and $\frac{1}{2}$ respectively. 
\begin{equation} \label{Ipow}
\begin{tikzpicture}[scale = .6]
 \draw[dotted] (0, 0) circle (5pt);
\filldraw (0, -1) circle (5pt);
 \filldraw (1, 0) circle (5pt);
\filldraw (0, -2) circle (5pt);
 \filldraw (2, 0) circle (5pt);
\filldraw (1, -1) circle (5pt);
 \filldraw (3, 0) circle (5pt);
\filldraw (2, -1) circle (5pt);
 \filldraw (1, -2) circle (5pt);
\filldraw (0, -3) circle (5pt);
\filldraw (2, -2) circle (5pt);
\filldraw (4, 0) circle (5pt);
\filldraw (1, -3) circle (5pt);
\filldraw (3, -1) circle (5pt);
\filldraw (0, -4) circle (5pt);
\node at (3, -3) {$\ddots$};
\node at (1.5, 1) {$\I$};
\end{tikzpicture}
\hspace{.8in}
\begin{tikzpicture}[scale = .6]
 \draw[dotted] (0, 0) circle (5pt);
\draw[dotted] (0, -1) circle (5pt);
 \draw[dotted] (1, 0) circle (5pt);
\filldraw (0, -2) circle (5pt);
 \filldraw (2, 0) circle (5pt);
\filldraw (1, -1) circle (5pt);
 \filldraw (3, 0) circle (5pt);
\filldraw (2, -1) circle (5pt);
 \filldraw (1, -2) circle (5pt);
\filldraw (0, -3) circle (5pt);
\filldraw (2, -2) circle (5pt);
\filldraw (4, 0) circle (5pt);
\filldraw (1, -3) circle (5pt);
\filldraw (3, -1) circle (5pt);
\filldraw (0, -4) circle (5pt);
\node at (3, -3) {$\ddots$};
\node at (1.5, 1) {$\I^2$};
\end{tikzpicture}
\hspace{.8in}
\begin{tikzpicture}[scale = .6]
\node at (1.5, 1) {$\I^3$};
 \draw[dotted] (0, 0) circle (5pt);
\draw[dotted] (0, -1) circle (5pt);
 \draw[dotted] (1, 0) circle (5pt);
\draw[dotted] (0, -2) circle (5pt);
 \draw[dotted] (2, 0) circle (5pt);
\draw[dotted] (1, -1) circle (5pt);
 \filldraw (3, 0) circle (5pt);
\filldraw (2, -1) circle (5pt);
 \filldraw (1, -2) circle (5pt);
\filldraw (0, -3) circle (5pt);
\filldraw (2, -2) circle (5pt);
\filldraw (4, 0) circle (5pt);
\filldraw (1, -3) circle (5pt);
\filldraw (3, -1) circle (5pt);
\filldraw (0, -4) circle (5pt);
\node at (3, -3) {$\ddots$};
\end{tikzpicture}
\end{equation}

\begin{equation} \label{Jpow}
\begin{tikzpicture}[scale = .6]
 \draw[dotted] (0, 0) circle (5pt);
\draw[dotted] (1, 0) circle (5pt);
 \filldraw (0, -1) circle (5pt);
\filldraw (0, -2) circle (5pt);
 \filldraw (2, 0) circle (5pt);
\filldraw (1, -1) circle (5pt);
 \filldraw (3, 0) circle (5pt);
\filldraw (2, -1) circle (5pt);
 \filldraw (1, -2) circle (5pt);
\filldraw (0, -3) circle (5pt);
\filldraw (2, -2) circle (5pt);
\filldraw (4, 0) circle (5pt);
\filldraw (1, -3) circle (5pt);
\filldraw (3, -1) circle (5pt);
\filldraw (0, -4) circle (5pt);
\node at (3, -3) {$\ddots$};
\node at (1.5, 1) {$\J$};
\end{tikzpicture}
\hspace{.8in}
\begin{tikzpicture}[scale = .6]
 \draw[dotted] (0, 0) circle (5pt);
\draw[dotted] (0, -1) circle (5pt);
 \draw[dotted] (1, 0) circle (5pt);
\filldraw (0, -2) circle (5pt);
 \draw[dotted] (2, 0) circle (5pt);
\filldraw (1, -1) circle (5pt);
 \filldraw (3, 0) circle (5pt);
\filldraw (2, -1) circle (5pt);
 \filldraw (1, -2) circle (5pt);
\filldraw (0, -3) circle (5pt);
\filldraw (2, -2) circle (5pt);
\filldraw (4, 0) circle (5pt);
\filldraw (1, -3) circle (5pt);
\filldraw (3, -1) circle (5pt);
\filldraw (0, -4) circle (5pt);
\node at (3, -3) {$\ddots$};
\node at (1.5, 1) {$\J^2$};
\end{tikzpicture}
\hspace{.8in}
\begin{tikzpicture}[scale = .6]
\node at (1.5, 1) {$\J^3$};
 \draw[dotted] (0, 0) circle (5pt);
\draw[dotted] (0, -1) circle (5pt);
 \draw[dotted] (1, 0) circle (5pt);
\filldraw (0, -2) circle (5pt);
\filldraw (4, 0) circle (5pt);
\filldraw (3, -1) circle (5pt);
 \draw[dotted] (2, 0) circle (5pt);
\draw[dotted] (1, -1) circle (5pt);
 \draw[dotted] (3, 0) circle (5pt);
\filldraw (2, -1) circle (5pt);
 \filldraw (1, -2) circle (5pt);
\filldraw (0, -3) circle (5pt);
\filldraw (2, -2) circle (5pt);
\filldraw (2, -2) circle (5pt);
\filldraw (4, 0) circle (5pt);
\filldraw (1, -3) circle (5pt);
\filldraw (0, -4) circle (5pt);
\node at (3, -3) {$\ddots$};
\end{tikzpicture}
\end{equation}

To say that $S$ is admissible is to say that $\I_S$ is built by taking unions and intersections such half planes, which corresponds to intersections and unions of $\I$ and $\J$. We list below the principal parts bundles we require in the remainder of the paper and their associated ideal $\I_S$.
\begin{enumerate}
    \item $S = \{1, x\}$ with $\I_S = \J$, which we call the bundle of \emph{restricted principal parts}.
    \item $S = \{1, x, y, x^2\}$ with $\I_S = \J^2$ will arise in triple point calculations.
    \item $S = \{1, x, y, x^2, xy\}$ with $\I_S = \I^3 + \J^3$ arises when finding quadruple points in a pencil of conics.
    \item $S = \{1, x, y, x^2, xy, x^3\}$ with $\I_S = \J^3$ will arise in finding quadruple points in pentagonal covers.
\end{enumerate}
Diagrams corresponding to these sets appear at the end of the next subsection. Given two admissible sets $S \subset S'$, there is a natural surjection $P^{S'}_{Y/Z}(\W) \to P^{S}_{Y/Z}(\W)$, which corresponds to truncating Taylor series. This determines the order(s) that the terms $\Sym^i \Omega_x \otimes \Sym^j \Omega_y \otimes \W$ corresponding to $x^i y^j \in S'$ may appear as quotients in a filtration: a term corresponding to $x^i y^j \in S'$ is a well-defined subbundle of $P^{S'}_{Y/Z}(\W)$ if $S' \smallsetminus x^i y^j$ is an admissible set.

\subsection{Bundle-induced refinements}
Now suppose that $a^* \W$ admits a filtration on $X$ by
\begin{equation} \label{otherfilt} 0 \rightarrow K \rightarrow a^*\W \rightarrow \W' \rightarrow 0,
\end{equation}
where $\W'$ is a vector bundle, and hence so is $K$.
Exactness of principal parts for $X$ over $Z$ gives an exact sequence
\[0 \rightarrow P^m_{X/Z}(K) \rightarrow P^m_{X/Z}(a^*\W) \rightarrow P^m_{X/Z}(\W') \rightarrow 0.\]
We are interested in the restriction of this filtration to $a^* P^m_{Y/Z}(\W) \subset P^m_{X/Z}(a^*\W)$. 
First, we need the following fact.

\begin{lem}
The intersection of the two subbundles
\begin{equation} \label{twosubs} P^m_{X/Z}(K) \subset P^m_{X/Z}(a^*\W) \qquad \text{and} \qquad a^* P^m_{Y/Z}(\W) \subset P^m_{X/Z}(a^*\W)
\end{equation}
is a subbundle. 
\end{lem}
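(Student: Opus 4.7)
The plan is to identify the intersection explicitly as $a^*P^m_{Y/Z}(K)$ and then show that this is a subbundle of $P^m_{X/Z}(a^*\W)$. First I would set up the commutative diagram of short exact sequences in which the rows come from Lemma \ref{partspullback} applied to $K$ and to $\W$:
\[
\begin{tikzcd}
0 \arrow{r} & a^*P^m_{Y/Z}(K) \arrow{r} \arrow{d} & P^m_{X/Z}(K) \arrow{r} \arrow{d} & F_m(K) \arrow{r} \arrow{d} & 0 \\
0 \arrow{r} & a^*P^m_{Y/Z}(\W) \arrow{r} & P^m_{X/Z}(a^*\W) \arrow{r} & F_m(\W) \arrow{r} & 0.
\end{tikzcd}
\]
The left and center vertical maps are the inclusions we are studying, and are injective with locally free cokernel by Lemma \ref{partspullback} and Proposition \ref{parts}(3) applied to \eqref{otherfilt}.

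The first step is to check that the right-hand vertical arrow $F_m(K)\to F_m(\W)$ is injective. By Lemma \ref{partspullback}, both $F_m(K)$ and $F_m(\W)$ carry a filtration whose graded pieces have the form $\Sym^{j-1}\Omega_{X/Z}\otimes\Omega_{X/Y}\otimes(\cdot)$ for $j=1,\dots,m$, and the map $F_m(K) \to F_m(\W)$ respects these filtrations. On graded pieces the map is obtained from the injection $K\hookrightarrow a^*\W$ by tensoring with a locally free sheaf, so it is injective; injectivity of $F_m(K) \to F_m(\W)$ then follows by induction on $m$ and the snake lemma. Equivalently, one can work locally: $P^m_{X/Z}(K)\cap a^*P^m_{Y/Z}(\W)$ consists of those elements of $P^m_{X/Z}(a^*\W)$ whose Taylor coefficients lie in $K$ and whose derivatives are taken only in horizontal directions, which is $a^*P^m_{Y/Z}(K)$.

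Given injectivity of $F_m(K)\to F_m(\W)$, a diagram chase (or direct application of the snake lemma to the above diagram) shows that the intersection of the two subsheaves in \eqref{twosubs} is precisely $a^*P^m_{Y/Z}(K)$: any local section of $P^m_{X/Z}(K)$ mapping into $a^*P^m_{Y/Z}(\W)$ has vanishing image in $F_m(\W)$, hence also in $F_m(K)$, hence lies in $a^*P^m_{Y/Z}(K)$.

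Finally, I would argue that $a^*P^m_{Y/Z}(K)$ is a subbundle of $P^m_{X/Z}(a^*\W)$. By Proposition \ref{parts}(3), $P^m_{Y/Z}(K)\hookrightarrow P^m_{Y/Z}(\W)$ has locally free cokernel $P^m_{Y/Z}(\W')$; pulling back by $a$ preserves this, so $a^*P^m_{Y/Z}(K)$ is a subbundle of $a^*P^m_{Y/Z}(\W)$. Combined with the fact that $a^*P^m_{Y/Z}(\W)$ is a subbundle of $P^m_{X/Z}(a^*\W)$ (Lemma \ref{partspullback}), this gives the desired conclusion, since a composition of subbundle inclusions is a subbundle inclusion. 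The only delicate point in the whole argument is the verification that $F_m(K)\to F_m(\W)$ is injective; everything else is formal from the exact sequences already established.
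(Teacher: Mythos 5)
Your argument has a genuine gap at its very first step: you apply Lemma \ref{partspullback} to $K$ and work with the objects $a^*P^m_{Y/Z}(K)$ and $F_m(K)$, but these are not defined. Lemma \ref{partspullback} requires a vector bundle on $Y$, whereas in the setting of \eqref{otherfilt} the bundle $K$ is a subbundle of $a^*\W$ that lives only on $X$ — it is in general \emph{not} of the form $a^*K'$ for a bundle $K'$ on $Y$ (e.g.\ in the degree $4$ application, $K$ is the kernel of a quotient built from the tautological bundle on a $\pp\F$ factor of $X$, which does not descend to $Y$). Consequently the top row of your diagram, and hence the identification of the intersection as $a^*P^m_{Y/Z}(K)$, does not make sense. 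Indeed, the whole point of this lemma and of Definition \ref{Qdef} is that the "horizontal principal parts of $K$" must be \emph{defined} as this intersection (denoted $\underline{P}^m_{Y/Z}(K)$, with an underline precisely to record that it only lives on $X$); the paper notes separately that when $K = a^*K'$ it reduces to $a^*P^m_{Y/Z}(K')$, which is the special case your argument actually treats.

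The paper's proof avoids this by inducting on $m$ using the vanishing-order filtrations of Proposition \ref{parts}(2). Working locally, where these filtrations split, the problem reduces to intersecting the graded pieces: $\Sym^m\Omega_{X/Z}\otimes K$ and $a^*\Sym^m\Omega_{Y/Z}\otimes a^*\W$ inside $\Sym^m\Omega_{X/Z}\otimes a^*\W$. That intersection is $a^*\Sym^m\Omega_{Y/Z}\otimes K$ — a pulled-back bundle tensored with the bundle $K$ on $X$, which is perfectly well-defined and is a subbundle. If you want to salvage your strategy, you would need to replace $a^*P^m_{Y/Z}(K)$ throughout by an object built from $K$ on $X$ with graded pieces $a^*\Sym^j\Omega_{Y/Z}\otimes K$, which essentially forces you back to the paper's filtration argument.
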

\begin{proof}
We proceed by induction. For $m=0$, the claim is just that $K$ is a subbundle of $a^*\W$. The question is local, so we can assume that the vanishing order filtration exact sequences
\[
0\rightarrow \Sym^m \Omega_{X/Z}\otimes a^*\W\rightarrow P^m_{X/Z}(a^*\W)\rightarrow P^{m-1}_{X/Z}(a^*\W)\rightarrow 0,
\]
are split. By induction and the (locally split) exact sequences,
\[
0\rightarrow \Sym^m\Omega_{X/Z}\otimes K\rightarrow P^m_{X/Z}(K)\rightarrow P^{m-1}_{X/Z}(K)\rightarrow 0
\]
and
\[
0\rightarrow a^*\Sym^m \Omega_{Y/Z}\otimes a^*\W\rightarrow a^*P^m_{Y/Z}(\W)\rightarrow a^*P^{m-1}_{Y/Z}(\W)\rightarrow 0
\]
it suffices to show that the intersection of $\Sym^m\Omega_{X/Z}\otimes K$ and $a^*\Sym^m\Omega_{Y/Z}\otimes a^*\W$ is a subbundle of $\Sym^m\Omega_{X/Z}\otimes a^*\W$. But this intersection is given by $a^*\Sym^m\Omega_{Y/Z}\otimes K$, which is a subbundle.
\end{proof}

\begin{definition} \label{Qdef}
We define $\underline{P}^m_{Y/Z}(K)$ to be the intersection of the two subbundles in \eqref{twosubs}. This subbundle tracks principal parts of $K$ in the directions of $Y/Z$. We include the underline to remind ourselves that this bundle is defined on $X$ since $K$ is defined on $X$. We define $Q^m_{Y/Z}(\W')$ to be the cokernel of $\underline{P}^m_{Y/Z}(K) \to a^* P^m_{Y/Z}(\W)$.
\end{definition}
When $K=a^*K'$ for a bundle $K'$ on $Y$, the bundle $\underline{P}^m_{Y/Z}(K)$ is just the bundle $a^*P^m_{Y/Z}(K')$.
\par The vanishing order filtrations from Proposition \ref{parts} of $P^m_{X/Z}(K)$ and $a^*P^m_{Y/Z}(\W)$ restrict to a vanishing order filtration on $\underline{P}^m_{Y/Z}(K)$, which in turn induces a vanishing order filtration on $Q^m_{Y/Z}(\W')$. We describe this for $m = 1$ below for future use.

\begin{lem} \label{Qequip}
The bundle $Q^1_{Y/Z}(\W')$ is equipped with a surjection $a^*P^1_{Y/Z}( \W) \to Q^1_{Y/Z}(\W')$ and a filtration
\[0 \rightarrow a^*\Omega_{Y/Z} \otimes \W' \to Q^1_{Y/Z}(\W') \rightarrow \W' \rightarrow 0\]
A section $\O_Y \xrightarrow{\delta} \W$ on $X$ induces a section $\O_X \xrightarrow{\delta'} a^*P^1_{Y/Z}(\W) \to Q^1_{Y/Z}(\W')$ that records the values and ``horizontal derivatives" of $\delta$ in the quotient $\W'$.
\end{lem}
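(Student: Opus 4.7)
The plan is to derive all three claims directly from the definition of $Q^1_{Y/Z}(\W')$ as the cokernel of the inclusion $\underline{P}^1_{Y/Z}(K) \hookrightarrow a^*P^1_{Y/Z}(\W)$. The promised surjection $a^*P^1_{Y/Z}(\W) \to Q^1_{Y/Z}(\W')$ is then immediate. The substantive content is to pin down the associated graded pieces.

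First, I would establish the vanishing order filtration
\[
0 \to a^*\Omega_{Y/Z} \otimes K \to \underline{P}^1_{Y/Z}(K) \to K \to 0
\]
by the same local splitting argument used in the preceding lemma. Namely, the standard filtrations of $P^1_{X/Z}(K)$, $P^1_{X/Z}(a^*\W)$, and $a^*P^1_{Y/Z}(\W)$ coming from Proposition \ref{parts}(\ref{filtration}) split locally, and then the intersection of the degree-$0$ and degree-$1$ parts of $P^1_{X/Z}(K)$ and $a^*P^1_{Y/Z}(\W)$ inside $P^1_{X/Z}(a^*\W)$ is simply $K$ and $a^*\Omega_{Y/Z} \otimes K$, respectively (the same intersection computation as in the previous lemma, one filtration step at a time).

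Placing this sequence alongside the vanishing order filtration of $a^*P^1_{Y/Z}(\W)$ produces a commutative diagram of short exact sequences
\[
\begin{tikzcd}
0 \arrow[r] & a^*\Omega_{Y/Z} \otimes K \arrow[r] \arrow[d, hook] & \underline{P}^1_{Y/Z}(K) \arrow[r] \arrow[d, hook] & K \arrow[r] \arrow[d, hook] & 0 \\
0 \arrow[r] & a^*\Omega_{Y/Z} \otimes a^*\W \arrow[r] & a^*P^1_{Y/Z}(\W) \arrow[r] & a^*\W \arrow[r] & 0,
\end{tikzcd}
\]
with injective vertical maps. Applying the snake lemma (or directly computing cokernels, using $\W' = \coker(K \hookrightarrow a^*\W)$) then yields the filtration
\[
0 \to a^*\Omega_{Y/Z} \otimes \W' \to Q^1_{Y/Z}(\W') \to \W' \to 0.
\]

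For the section statement, I would feed $\delta : \O_Y \to \W$ into the Taylor expansion construction of Proposition \ref{parts}(1), pulled back to $X$ via Lemma \ref{partspullback}, to obtain a section $\O_X \to a^*P^1_{Y/Z}(\W)$ encoding the value and first-order expansion of $\delta$ along the $Y/Z$ directions. Composing with the quotient map to $Q^1_{Y/Z}(\W')$ discards the part landing in $K$ and, via the filtration just established, records exactly the value of $\delta$ in $\W'$ together with its first-order derivatives along $Y/Z$ in $a^*\Omega_{Y/Z} \otimes \W'$. The main obstacle is the identification of the associated graded of $\underline{P}^1_{Y/Z}(K)$, but since this amounts to unwinding the same local splitting used to prove that the intersection is a subbundle, the proof should be short.
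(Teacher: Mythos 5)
Your proposal is correct and matches the paper's (implicit) argument: the paper gives no formal proof of this lemma, instead asserting in the preceding paragraph that the vanishing order filtrations of $P^m_{X/Z}(K)$ and $a^*P^m_{Y/Z}(\W)$ restrict to one on $\underline{P}^m_{Y/Z}(K)$ and hence induce one on $Q^m_{Y/Z}(\W')$, which is exactly the local-splitting and snake-lemma computation you carry out. Your write-up is a correct fleshing-out of that same route, including the identification of the graded pieces of $\underline{P}^1_{Y/Z}(K)$ as $a^*\Omega_{Y/Z}\otimes K$ and $K$.
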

\subsection{Directional and bundle-induced refinements} The principal parts bundles constructed in this subsection will not be needed until Section \ref{app-sec}.
Here, we suppose that we have filtrations as in \eqref{basicfilt} and \eqref{otherfilt}.
We have an inclusion $\underline{P}^m_{Y/Z}(K) \hookrightarrow a^*P^m_{Y/Z}(\W)$ as well as a quotient $a^*P^m_{Y/Z}(\W) \to P^S_{Y/Z}(\W)$. We define $\underline{P}^S_{Y/Z}(K)$ to be image of the composition \[\underline{P}^m_{Y/Z}(K) \hookrightarrow a^*P^m_{Y/Z}(\W) \to P^S_{Y/Z}(\W),\]
which tracks the principal parts of $K$ in the $Y/Z$ directions specified by $S$.

Given two admissible sets $S \subset S'$, there is a quotient $\underline{P}^{S'}_{Y/Z}(K) \to \underline{P}^{S}_{Y/Z}(K)$. Let $V \subset \underline{P}^{S'}_{Y/Z}(K)$ be the kernel.
We define $P^{S \subset S'}_{Y/Z}(\W \to \W')$ to be the cokernel of the composition
\[V \hookrightarrow \underline{P}^{S'}_{Y/Z}(K) \hookrightarrow P^{S'}_{Y/Z}(\W).\]
The bundle $P^{S \subset S'}_{Y/Z}(\W \to \W')$ tracks the principal parts associated to $S$ on $\W$ and then the principal parts associated to the rest of $S'$ but just in the $\W'$ quotient.
We visualize $P^{S \subset S'}_{Y/Z}(\W \to \W')$ by a decorated diagram of shape $S'$, where the dots are filled in the subshape $S$ and half filled (representing values just in $\W'$) in the remainder $S' \smallsetminus S$ (colored in blue below).
A preview of the cases we shall need later are pictured below.
\begin{enumerate}
   \item[(6.4A)] \label{6T} $S = \{1, x\}$ and $S' = \{1, x, y, x^2\}$, for triple points in Section \ref{formulas5}.
   \begin{center}
   \begin{tikzpicture}[scale = .6]
     \draw[white] (3, 0) circle (5pt);
 \filldraw (0, 0) circle (5pt);
\draw (0, -1) circle (5pt);
 \filldraw (1, 0) circle (5pt);
 \draw (2, 0) circle (5pt);
 \begin{scope}[xshift=2cm]
 \fill[blue] (0,0) -- (90:1ex) arc (90:270:1ex) -- cycle;
 \end{scope}
  \begin{scope}[yshift=-1cm]
 \fill[blue] (0,0) -- (90:1ex) arc (90:270:1ex) -- cycle;
 \end{scope}
\end{tikzpicture}
   \end{center}
   \item[(6.4B)] \label{U4} $S = \{1, x\}$ and $S' = \{1, x, y, x^2, xy\}$, for quadruple points in Lemma \ref{4quad}.
      \begin{center}
   \begin{tikzpicture}[scale = .6]
     \draw[white] (3, 0) circle (5pt);
 \filldraw (0, 0) circle (5pt);
\draw (0, -1) circle (5pt);
 \filldraw (1, 0) circle (5pt);
 \draw (2, 0) circle (5pt);
 \begin{scope}[xshift=2cm]
 \fill[blue] (0,0) -- (90:1ex) arc (90:270:1ex) -- cycle;
 \end{scope}
  \begin{scope}[yshift=-1cm]
 \fill[blue] (0,0) -- (90:1ex) arc (90:270:1ex) -- cycle;
 \end{scope}
   \begin{scope}[yshift=-1cm]
 \fill[blue] (0,0) -- (90:1ex) arc (90:270:1ex) -- cycle;
 \end{scope}
 \draw (1, -1) circle (5pt);
    \begin{scope}[yshift=-1cm, xshift = 1cm]
 \fill[blue] (0,0) -- (90:1ex) arc (90:270:1ex) -- cycle;
 \end{scope}
\end{tikzpicture}
   \end{center}
 \item[(6.4C)] \label{U5} $S = \{1, x\}$ and $S' = \{1, x, y, x^2, xy, x^3\}$, for quadruple points in Lemma \ref{classU5}.
 \begin{center}
    \begin{tikzpicture}[scale = .6]
 \filldraw (0, 0) circle (5pt);
\draw (0, -1) circle (5pt);
 \filldraw (1, 0) circle (5pt);
 \draw (2, 0) circle (5pt);
 \begin{scope}[xshift=2cm]
 \fill[blue] (0,0) -- (90:1ex) arc (90:270:1ex) -- cycle;
 \end{scope}
  \begin{scope}[yshift=-1cm]
 \fill[blue] (0,0) -- (90:1ex) arc (90:270:1ex) -- cycle;
 \end{scope}
   \begin{scope}[yshift=-1cm]
 \fill[blue] (0,0) -- (90:1ex) arc (90:270:1ex) -- cycle;
 \end{scope}
 \draw (1, -1) circle (5pt);
    \begin{scope}[yshift=-1cm, xshift = 1cm]
 \fill[blue] (0,0) -- (90:1ex) arc (90:270:1ex) -- cycle;
 \end{scope}
  \draw (3, 0) circle (5pt);
    \begin{scope}[xshift = 3cm]
 \fill[blue] (0,0) -- (90:1ex) arc (90:270:1ex) -- cycle;
 \end{scope}
 \end{tikzpicture}
 \end{center}
\end{enumerate}
Revisiting Definition \ref{Qdef},  $Q^1_{Y/Z}(\W') = P^{\varnothing \subset \{1, x, y\}}(\W \to \W')$ would be represented by
\begin{center}
\begin{tikzpicture}[scale = .6]
   \draw[white] (-1.6, 0) circle (5pt);
 \draw (0, 0) circle (5pt);
\draw (0, -1) circle (5pt);
 \draw (1, 0) circle (5pt); \begin{scope}[xshift=1cm]
 \fill[blue] (0,0) -- (90:1ex) arc (90:270:1ex) -- cycle;
 \end{scope}
  \begin{scope}[yshift=-1cm]
 \fill[blue] (0,0) -- (90:1ex) arc (90:270:1ex) -- cycle;
 \end{scope}
 \fill[blue] (0,0) -- (90:1ex) arc (90:270:1ex) -- cycle;
\draw[white] (3, 0) circle (5pt);
\end{tikzpicture}
\end{center}

\section{The Chow ring in degree $3$} \label{RE3}

\subsection{Set up}
We begin by recalling the linear algebraic data associated to a degree $3$ cover as developed by Miranda and Casnsati--Ekedahl \cite{M,CE}. For more details in our context, see \cite{BV} and \cite[Section 3.1]{part1}. Given a degree $3$, genus $g$ cover, $\alpha: C \to \pp^1$, define $E_\alpha := (\alpha_*\O_C/\O_{\pp^1})^\vee$, which is a rank $2$, degree $g+2$ vector bundle on $\pp^1$. There is a natural embedding
$C \subset \pp E_{\alpha}^\vee$ and $C$ is the zero locus of a section of 
\[H^0(\pp E^{\vee}_\alpha, \gamma^*\det E_\alpha^\vee \otimes \O_{\pp E_\alpha^{\vee}}(3)) \cong H^0(\pp^1, \det E_\alpha^\vee \otimes \Sym^3 E_\alpha),\]
where $\gamma: \pp E^{\vee}_\alpha \to \pp^1$ is the structure map.
Conversely, given a globally generated, rank $2$, degree $g + 2$ vector bundle $E$ on $\pp^1$ with $\Sym^3 E \otimes \det E^\vee$ globally generated, the vanishing of a general section $\delta \in H^0(\pp E^{\vee}, \gamma^* \det E^\vee \otimes \O_{\pp E^{\vee}}(3))$ defines a smooth, genus $g$ triple cover $\alpha: C = V(\delta) \subset \pp E^{\vee} \to \pp^1$ such that $E_\alpha \cong E$. First, let us give a characterization of which sections do not yield covers parametrized by $\H_{3,g}$.

\begin{lem}\label{3limits}
Let $E$ be a rank $2$, degree $g+2$ vector bundle on $\p^1$ such that $\Sym^3 E\otimes \det E^{\vee}$ is globally generated. Let $\delta \in H^0(\pp E, \gamma^* \det E^\vee \otimes \O_{\pp E_\alpha}(3))$.
Suppose that the zero locus $C = V(\delta) \subseteq \p E^{\vee}$ is not a smooth, irreducible genus $g$ triple cover of $\p^1$. Then there exists a point $p\in C$ such that $\dim T_p C =2$. 
\end{lem}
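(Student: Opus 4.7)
The starting observation is that $\pp E^\vee$ is a smooth surface ($\pp^1$-bundle over $\pp^1$), and $C = V(\delta)$ is a divisor in it. Hence at every $p\in C$ we have $\dim T_p C \leq 2$, with equality if and only if $C$ is singular at $p$, equivalently $\delta \in \mathfrak{m}_p^2$. So the lemma is equivalent to the following: if $\delta$ vanishes to order at most $1$ at every point, then $C$ is a smooth, irreducible, genus $g$ triple cover. My plan is to assume $C$ is smooth as a scheme and work out the possibilities.

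First I would handle the case where $C$ contains some fiber $F_t = \gamma^{-1}(t)$ of $\gamma$. If $F_t$ appeared with multiplicity $\geq 2$ in $C$, then $\delta$ would vanish to order $\geq 2$ at every point of $F_t$, contradicting smoothness; so $C = F_t + C'$ with $F_t$ of multiplicity $1$. Computing in $A^*(\pp E^\vee)$ using $\zeta^2 = (g+2)\gamma^*H\cdot\zeta$, one finds $C'\cdot F_t = 3$, so $C'$ meets $F_t$ in three points (with multiplicity), and $C$ is singular at any such point — contradiction. So $C$ contains no fiber, and $C\to \pp^1$ is finite and flat of degree $3$.

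Next I would rule out reducibility. A smooth divisor in a smooth surface is a disjoint union of smooth irreducible curves, so write $C = \bigsqcup_{i=1}^m C_i$ with $\sum \deg(C_i/\pp^1) = 3$. If $m=3$, every $C_i$ is a section with class $\zeta + a_i \gamma^*H$, and pairwise disjointness combined with the class $3\zeta -(g+2)\gamma^*H$ quickly yields $g=-2$, impossible. If $m=2$, WLOG $C_1$ is a section with class $\zeta + a\gamma^*H$ and $C_2$ has class $2\zeta + b\gamma^*H$; summing the classes and imposing $C_1\cdot C_2=0$ forces $a=-(g+2)$, so $C_1$ has class $\zeta - (g+2)\gamma^*H$. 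This is the key step: effectivity of this class requires $H^0(\pp^1, E(-g-2))\neq 0$, which in turn forces $E \cong \mathcal{O}\oplus \mathcal{O}(g+2)$. But then $\Sym^3 E \otimes \det E^\vee$ contains the summand $\mathcal{O}(-g-2)$, which is not globally generated for $g\geq 2$ — contradicting the hypothesis. So $m=1$ and $C$ is a smooth irreducible triple cover; its arithmetic genus is computed from its class via adjunction and equals $g$.

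The main obstacle I anticipate is the $m=2$ disconnected case: one really needs to use the positivity hypothesis on $\Sym^3 E \otimes \det E^\vee$ (rather than just, say, that $E$ is globally generated) to exclude a genuinely possible-sounding picture of a section plus a disjoint smooth double cover. The rest of the argument — the fiber-containment case and the case $m=3$ — reduces to transparent intersection-theoretic computations using $\zeta^2 = (g+2)\gamma^*H\cdot\zeta$.
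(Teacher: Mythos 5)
Your argument is correct, but it takes a genuinely different route from the paper's. The paper does not classify components at all: it shows directly that $C$ is \emph{connected} whenever $\delta \neq 0$ --- when $C \to \pp^1$ is finite, via $h^0(C,\O_C) = h^0(\pp^1,\O_{\pp^1}) + h^0(\pp^1,E^\vee) = 1$ (using that global generation of $\Sym^3 E \otimes \det E^\vee$ forces both summands of $E$ to have degree at least $\tfrac{g+2}{3} > 0$), and when $C$ has a vertical component, by observing that every divisor in the class $\O_{\pp E^\vee}(3)\otimes\gamma^*\det E^\vee$ has a component meeting every fiber. Connectedness plus failure to be a smooth irreducible triple cover then immediately produces a point where two branches meet or where the scheme is non-reduced, hence $\dim T_pC = 2$. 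Your intersection-theoretic case analysis ($C \supset F_t$; $m=3$; $m=2$) is a more computational substitute for that one cohomological connectedness statement; it is longer but makes completely explicit where the positivity hypothesis enters (your $m=2$ case), whereas the paper buries it in $h^0(E^\vee)=0$. Two small loose ends to tie up: (i) you should dispose of $\delta = 0$ separately, since then $C$ is not a divisor but the conclusion holds trivially ($C = \pp E^\vee$ is $2$-dimensional); (ii) in the $m=2$ case, $h^0(\pp^1, E(-g-2)) \neq 0$ forces only that $E$ has a summand of degree $\geq g+2$, hence the other summand has degree $\leq 0$ --- not necessarily $E \cong \O\oplus\O(g+2)$ --- but in every such case $\Sym^3E\otimes\det E^\vee$ acquires a summand of degree $\leq -(g+2) < 0$, so your contradiction stands.
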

\begin{proof}
If $\delta = 0$, then $C$ is $2$-dimensional and the claim follows. Now suppose $\delta \neq 0$.
We will show that $C$ is connected, which implies that if $C$ fails to be an irreducible triple cover, it must have a point with $2$ dimensional tangent space.
If $\Sym^3 E \otimes \det E^\vee$ is globally generated, then both summands of $E = \O(e_1) \oplus \O(e_2)$ have degree at least $\frac{g+2}{3}$.
Hence,
$h^0(\pp^1, E^\vee) = 0$. If $C \to \pp^1$ is finite we have $h^0(C, \O_C) = h^0(\pp^1, \alpha_* \O_C) = h^0(\pp^1, \O_{\pp^1}) + h^0(\pp^1, E_\alpha^\vee) = 1$, so $C$ is connected. Now suppose $C$ has a positive dimensional fiber over $\pp^1$. 
Any curve in the class $\O_{\pp E^\vee}(3) \otimes \gamma^* \det E^\vee$ has a component that meets every fiber, thus $C$ is again connected.
\begin{center}
\includegraphics[width=2.2in]{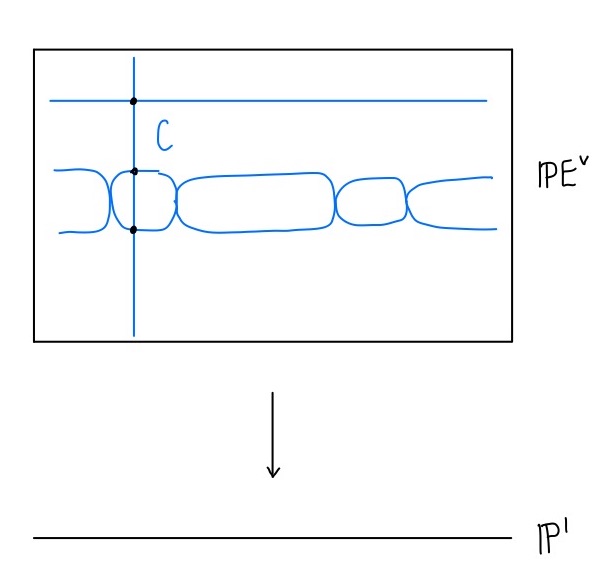}
\end{center}
\end{proof}

We now recall some notation and constructions from \cite{part1}.
The association of $\alpha:C \to \pp^1$ with $E_\alpha$ gives rise to a map of $\H_{3,g}$ to the moduli stack $\B_{3,g}$ of rank $2$, degree $g + 3$, globally generated vector bundles on $\pp^1$-bundles. Let $\pi: \P \to \B_{3,g}$ be the universal $\pp^1$-bundle and let $\E$ be the universal rank $2$ vector bundle on $\P$.
Continuing the notation of \cite{part1}, let $z = c_1(\O_{\P}(1))$ and define 
classes $a_i \in A^i(\B_{3,g})$ and $a_i' \in A^{i-1}(\B_{3,g})$ by the formula
\[c_i(\E) = \pi^*a_i + \pi^*a_i'z.\]
We also define $c_2 = -\pi_*(z^3) \in A^2(\B_{3,g})$, which is the pullback of the universal second Chern class on $\BSL_2$. By \cite[Theorem 4.4]{part1}, 
\begin{equation} \label{b3fact}
\text{$a_1, a_2, a_2', c_2$ generate $A^*(\B_{3,g})$ and satisfy no relations in degrees up to $g + 2$}.
\end{equation}

Now, let $\gamma: \pp \E^\vee \to \P$ and define $\mathcal{W} := \O_{\pp \E^\vee}(3) \otimes \gamma^* \det \E^\vee$, which is a line bundle on $\pp \E^\vee$.
Consider the bundle $\U_{3,g} := \gamma_* \mathcal{W} =  \Sym^3 \E \otimes \det \E^\vee$. 
We define 
\begin{equation} \label{bpdef} \B_{3,g}' := \B_{3,g} \smallsetminus \Supp R^1 \pi_* \U_{3,g}(-1).
\end{equation}
Equivalently, $\B_{3,g}'$ is the open locus where $\U_{3,g}$ is globally generated on the fibers of $\pi$.
By the theorem on cohomology and base change
\[\mathcal{X}'_{3,g}:= \pi_* \U_{3,g}|_{\B_{3,g}'} \]
is a vector bundle.
In \cite[Lemma 5.1]{part1}, we showed that the map $\H_{3,g} \to \B_{3,g}$ factors through an open embedding $\H_{3,g} \to \mathcal{X}'_{3,g}$.
Hence, the Chow ring of $\H_{3,g}$ is generated by the pullbacks of the classes $a_1,a_2',a_2,c_2$ from $\mathcal{B}'_{3,g}$.
We must determine the relations 
among these classes that come from excising $\Supp R^1 \pi_* \U_{3,g}(-1)$ from $\B_{3,g}$ and 
from excising
\[\Delta_{3,g}:= \mathcal{X}'_{3,g} \smallsetminus \H_{3,g}.\]
In other words, we shall compute the Chow ring $A^*(\H_{3,g})$ by computing the image of the left-hand map in the excision sequence
\[
A^{*-1}(\Delta_{3,g})\rightarrow A^*(\aW_{3,g}')\rightarrow A^*(\H_{3,g})\rightarrow 0.
\]

\subsection{Resolution and excision}

We begin by constructing a space $\tilde{\Delta}_{3,g}$, which corresponds to triple covers (or worse) with a marked singular point.
By forgetting the marked point, we will obtain a proper surjective morphism $\tilde{\Delta}_{3,g}\rightarrow \Delta_{3,g}$ by Lemma \ref{3limits}. Because our Chow rings are taken with rational coefficients, pushforward induces a surjection on Chow groups $A^*(\tilde{\Delta}_{3,g})\rightarrow A^*(\Delta_{3,g})$.  Thus, it will suffice to describe the image of $A^{*-1}(\widetilde{\Delta}_{3,g}) \to A^*(\mathcal{X}_{3,g}')$.

To build $\tilde{\Delta}_{3,g}$, we use the machinery of bundles of relative principal parts.
 By Proposition \ref{parts} part (1), there is an evaluation map
\begin{equation} \label{eval3}
\gamma^*\pi^*\aW_{3,g}' =(\pi \circ \gamma)^*(\pi \circ \gamma)_*\W \rightarrow P^1_{\p\E^{\vee}/\B'_{3,g}}(\W).
\end{equation}
A geometric point of $\gamma^*\pi^* \mathcal{X}_{3,g}'$ is the data of $(E, \delta, p)$ where $E$ is a geometric point of $\B_{3,g}'$, $\delta$ a section of $\O_{\pp E}(3) \otimes \gamma^* \det E^\vee$, and $p$ a point of $\pp E^\vee$.
Such a point lies in the kernel of the evaluation map \eqref{eval3} precisely when $\delta(p) = 0$ and the first order derivatives of $\delta$ also vanish at $p$, which is to say $V(\delta) \subset \pp E^\vee$ has $2$-dimensional tangent space at $p$. A similar description works in arbitrary families.
We define $\tilde{\Delta}_{3,g}$ to be the preimage of the zero section of \eqref{eval3} so we obtain a ``trapezoid" diagram:
\begin{equation}\label{3diagram}
\begin{tikzcd}
{\tilde{\Delta}_{3,g}} \arrow[r, "i", hook] \arrow[rd, "\rho''"'] & \gamma^*\pi^*\aW_{3,g}' \arrow[r, "\gamma'"] \arrow[d, "\rho'"] & \pi^*\aW_{3,g}' \arrow[r, "\pi'"] \arrow[d] & \aW_{3,g}' \arrow[d, "\rho"] \\
                                                                  & \p\E^{\vee} \arrow[r, "\gamma"']                       & \P \arrow[r, "\pi"']               & {\B_{3,g}'.} 
\end{tikzcd}
\end{equation}

We can thus determine information about the Chow ring of $\H_{3,g} = \aW_{3,g}' \smallsetminus (\pi' \circ \gamma' \circ i)(\widetilde{\Delta}_{3,g})$ using the Trapezoid Lemma \ref{trap}.
\begin{lem} \label{trig-result}
The rational Chow ring of $\H_{3,g}$ is a quotient of $\qq[a_1]/(a_1^3)$. Moreover, 
\begin{enumerate}
    \item For all $g \geq 3$, we have $A^1(\H_{3,g}) = \qq a_1$.
    \item For all $g \geq 6$, we have $A^2(\H_{3,g}) = \qq a_1^2$.
\end{enumerate}
\end{lem}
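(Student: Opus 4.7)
My plan is to invoke the Trapezoid Lemma \ref{trap} for the diagram \eqref{3diagram} together with an explicit Chern class computation. Since $\aW'_{3,g}\to\B'_{3,g}$ is a vector bundle, pullback gives $A^*(\aW'_{3,g})\cong A^*(\B'_{3,g})$, which by \eqref{b3fact} is a polynomial ring on $a_1,a_2',a_2,c_2$ through degree $g+2$. By excision and Lemma \ref{3limits}, $A^*(\H_{3,g})$ is the quotient of $A^*(\B'_{3,g})$ by the image of $(\pi'\circ\gamma'\circ i)_*$. Applying Lemma \ref{trap} to \eqref{3diagram} with $V=P^1_{\pp\E^\vee/\B'_{3,g}}(\W)$---a rank-$3$ bundle matching the expected codimension of $\tilde\Delta_{3,g}$, with surjective evaluation map \eqref{eval3} on a sufficiently large open by Lemma \ref{famjet}---yields a surjection
\[\qq[a_1,a_2',a_2,c_2]/\langle r_1,r_2,r_3,r_4\rangle\twoheadrightarrow A^*(\H_{3,g}),\]
where $r_i=(\pi\gamma)_*(c_3(V)\cdot\alpha_i)$ for $\alpha_i$ in the module basis $\{1,\zeta,z,z\zeta\}$ of $A^*(\pp\E^\vee)$ over $A^*(\B'_{3,g})$. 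The four relations live in degrees $1,2,2,3$.

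I next compute $c_3(V)$ using the filtration $0\to\W\otimes\Omega_{\pp\E^\vee/\B'_{3,g}}\to P^1\to\W\to 0$ from Proposition \ref{parts}(2). With $c_1(\W)=3\zeta-a_1-(g+2)z$ and the Chern classes of the relative cotangent bundle deduced from the two Euler sequences, $c_3(V)$ is an explicit polynomial in $\zeta,z,a_1,a_2',a_2,c_2$. Pushing forward via $\gamma_*\zeta=1$, $\pi_*z=1$, $\zeta^2=(a_1+(g+2)z)\zeta-(a_2+a_2'z)$, and $z^2=-c_2$ (after multiplying by each $\alpha_i$) gives the $r_i$ explicitly. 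Three coefficient checks are needed: the coefficient of $a_2'$ in $r_1$ is nonzero, so $r_1$ expresses $a_2'$ as a scalar multiple of $a_1$; modulo this substitution, $r_2$ and $r_3$ express $a_2$ and $c_2$ as scalar multiples of $a_1^2$; and $r_4$ reduces to a nonzero multiple of $a_1^3$. These together imply that the source of the surjection above is a quotient of $\qq[a_1]/(a_1^3)$.

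For parts (1) and (2), the Trapezoid surjection shows $A^1(\H_{3,g})$ is a quotient of $\qq a_1$ and $A^2(\H_{3,g})$ of $\qq a_1^2$, so it remains to show $a_1\neq 0$ for $g\geq 3$ and $a_1^2\neq 0$ for $g\geq 6$. These non-vanishing statements follow from geometric input: for $g\geq 4$ one invokes Patel--Vakil's theorem (cited in the introduction) that $A^*(\H_{3,g})$ is generated by $\kappa_1$, combined with the identification of $a_1$ as a nonzero scalar multiple of $\kappa_1$ via $\beta_*$; for $g=3$ and for $a_1^2\neq 0$ when $g\geq 6$, one produces a test family on which the class has nonzero integral. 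The main obstacle is the Chern class bookkeeping in the middle step---finite and easily mechanized via Schubert2---and verification that each key coefficient does not vanish. A minor technical point: should the evaluation map \eqref{eval3} fail to be surjective on a locus where the splitting type of $\E$ is unbalanced (e.g.\ where $e_2=2e_1$), one restricts to its complement, which has high codimension in $\B'_{3,g}$ and thus does not alter the low-degree Chow groups of interest.
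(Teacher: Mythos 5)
Your first half — the Trapezoid Lemma applied to \eqref{3diagram} with $V=P^1_{\pp\E^\vee/\B'_{3,g}}(\W)$, the computation of the four pushforwards $r_i=(\pi\gamma)_*(c_3(V)\cdot\alpha_i)$, and the verification that $\qq[a_1,a_2',a_2,c_2]/\langle r_1,\dots,r_4\rangle\cong\qq[a_1]/(a_1^3)$ — is exactly the paper's argument for the statement that $A^*(\H_{3,g})$ is a quotient of $\qq[a_1]/(a_1^3)$. That part only needs the image of the excision pushforward to \emph{contain} the Trapezoid ideal $I$, which Lemma \ref{trap} always gives.

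The gap is in parts (1) and (2), i.e.\ the non-vanishing of $a_1$ and $a_1^2$. You propose to get these from Patel--Vakil plus unspecified test families, but Patel--Vakil's result is a \emph{generation} statement ($A^*(\H_{3,g})$ is generated by $\kappa_1$); it gives an upper bound, not non-vanishing, and the test families are never produced (since $\H_{3,g}$ is not proper, pairing $a_1^2$ against something requires a complete two-dimensional family of trigonal covers, which is not a throwaway construction). The paper instead proves non-vanishing by showing that $I$ is the \emph{entire} ideal of relations in codimensions $1$ and $2$, and this requires two codimension estimates that your proposal either omits or misstates. First, one must bound the codimension of $\B_{3,g}\smallsetminus\B'_{3,g}$ (it is $\geq 2$ for $g\geq 3$ and $\geq 3$ for $g\geq 6$), so that \eqref{b3fact} still gives freeness of $A^1(\B'_{3,g})$ and $A^2(\B'_{3,g})$ on the stated generators; this is precisely where the hypotheses $g\geq 3$ and $g\geq 6$ enter, and your proposal never accounts for them. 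Second, your ``minor technical point'' about the locus $\mathcal{Z}$ where \eqref{eval3} fails to be fiberwise surjective has the logic backwards: the danger is not resolved by restricting to the complement of $\mathcal{Z}$ (shrinking the base can only introduce \emph{more} relations); rather, over $\mathcal{Z}$ the kernel $\widetilde\Delta_{3,g}$ acquires classes that are not pulled back from $\pp\E^\vee$, and their pushforwards are potential relations \emph{not} lying in $I$, which could a priori kill $a_1$ or $a_1^2$. The paper rules this out by showing \eqref{eval3} always has rank $\geq 2$, so $\codim(\rho''^{-1}(\mathcal{Z})\subset\widetilde\Delta_{3,g})=\codim(\mathcal{Z}\subset\pp\E^\vee)-1$, and then computing $\codim\mathcal{Z}$ via splitting loci ($=2$ for $g=4$, $=3$ for $g=7$, $\geq 4$ otherwise) — note in particular that the estimate genuinely fails in codimension $2$ for $g=4$, which is why that case needs the separate argument of Lemma \ref{van3}. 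Without this analysis, parts (1) and (2) are not established.
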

\begin{proof}
Let $z = c_1(\O_{\P}(1))$ and $\zeta = c_1(\O_{\pp \E^\vee}(1))$, so $z^i \zeta^j$ for $0 \leq i, j \leq 1$ form a basis for $A^*(\pp \E^\vee)$ as a $A^*(\B_{3,g}')$ module.
Let $I$ be the ideal generated by
$(\pi \circ \gamma)_*(c_3(P^1_{\pp \E^\vee/\B_{3,g}'}(\W)) \cdot z^i\zeta^j)$ for $0 \leq i, j \leq 1$.
We compute expressions for these push forwards in terms of $a_1, a_2, a_2', c_2$, and we find $\qq[a_1, a_2', a_2, c_2]/I \cong \qq[a_1]/(a_1^3)$. The code to do the above computations is provided at \cite{github}. For example, when $i = j = 0$, because $\tilde{\Delta}_{3,g} \to \Delta_{3,g}$ is generically one-to-one, this allows us to find
\begin{equation} \label{3codim1} [\Delta_{3,g}] = \pi'_* \gamma'_* [\widetilde{\Delta}_{3,g}] = \rho^*(\pi \circ \gamma)_*(c_3(P^1_{\pp \E^\vee/\B_{3,g}'}(\W)) = (8g + 12)a_1 - 9a_2'.
\end{equation}
By the Trapezoid Lemma \ref{trap}, we have that $A^*(\H_{3,g})$ is a quotient of $A^*(\B_{3,g}')/I$. 
Since $A^*(\B_{3,g}')$ is a quotient of $\qq[a_1, a_2, a_2', c_2]$, it follows that $A^*(\H_{3,g})$ is a quotient of $\qq[a_1]/(a_1^3)$.

First, note that the complement of $\B_{3,g}'$ inside $\B_{3,g}$ is the union of splitting loci where $E = \O(e_1) \oplus \O(e_2)$ for $3e_1 < g+2$ (see \cite[Section 4.2]{part1} for a review of splitting loci in our context). The codimension of the $(e_1, e_2)$ splitting locus with $e_1 \leq e_2$ is $\max\{0, e_2 - e_1 - 1\}$.
Using this, one readily checks that the complement of $\B_{3,g}'$ has codimension at least $2$ for $g \geq 3$ and at least $3$ for $g \geq 6$. Thus, by \eqref{b3fact}, for $g \geq 3$, the only relations in codimension $1$ come from the push forwards of classes on $\widetilde{\Delta}_{3,g}$. Further, for $g \geq 6$, the only relations in codimension $2$  come from the push forwards of classes supported on $\widetilde{\Delta}_{3,g}$.

To prove (1) and (2), it suffices to show that $I$ already accounts for all such relations in codimension $1$ when $g \geq 3$ and for all such relations in codimension $2$ when $g \geq 6$.
Precisely, let $\mathcal{Z} \subset \pp \E^\vee$ be the locus where the map \eqref{eval3} fails to be surjective on fibers.
We will show that
\begin{equation} \label{exeq1} A^0(\widetilde{\Delta}_{3,g}) = A^0(\widetilde{\Delta}_{3,g} \smallsetminus \rho''^{-1}(\mathcal{Z})) \cong \rho''^*A^0(\pp \E^\vee \smallsetminus \mathcal{Z}) = \rho''^*A^0(\pp \E^\vee)
\end{equation}
and when $g \neq 4$, that
\begin{equation} \label{exeq2}
A^1(\widetilde{\Delta}_{3,g}) = A^1(\widetilde{\Delta}_{3,g} \smallsetminus \rho''^{-1}(\mathcal{Z})) \cong \rho''^*A^1(\pp \E^\vee \smallsetminus \mathcal{Z}) =\rho''^*A^1(\pp \E^\vee).
\end{equation}
The middle isomorphism follows in both cases from the fact that $\widetilde{\Delta}_{3,g} \smallsetminus \rho''^{-1}(\mathcal{Z})$ is a vector bundle over $\pp \E^\vee \smallsetminus \mathcal{Z}$. To show the other equalities we use excision.

We claim that the map \eqref{eval3} always has rank at least $2$.
To see this, consider the diagram
\begin{equation} \label{rk2}
\begin{tikzcd}
\gamma^*\pi^* \pi_* \gamma_* \W \arrow{r} \arrow{d} & P^1_{\pp \E^\vee/\B_{3,g}'}(\W) \arrow{d} \\
\gamma^* \gamma_* \W \arrow{r} & P^1_{\pp \E^\vee/\P}(\W) 
\end{tikzcd}
\end{equation}
The left vertical map is a surjection because $\gamma_* \W$ is relatively globally generated along $\P \to \B_{3,g}'$ (by definition of $\B_{3,g}'$, see \eqref{bpdef}); the bottom horizontal map is surjective by Lemma \ref{veryample} because
$\W$ is relatively very ample on $\pp \E^\vee$ over $\P$. Thus, the top horizontal map must have rank at least $2 = \mathrm{rank} (P^1_{\pp \E^\vee/\P}(\W))$.
It follows that 
\begin{equation} \label{cocomp}
\codim(\rho''^{-1}(\mathcal{Z}) \subset \widetilde{\Delta}_{3,g}) = \codim(\mathcal{Z} \subset \pp \E^\vee) - 1.
\end{equation}
By the argument in Lemma \ref{famjet}, $\mathcal{Z}$ is the locus where $\W$ fails to induce a relative embedding on $\pp \E^\vee$ over $\B_{3,g}'$. 
By Lemma \ref{line}, the restriction to a fiber over $\B_{3,g}'$, say $\W|_{\p E^\vee}\cong\mathcal{O}_{\pp E^\vee}(3)\otimes \gamma^*\mathcal{O}_{\p^1}(-g-2)$ fails to be very ample if and only if
$E \cong \O(e_1) \oplus \O(e_2)$
with
$3e_1 \leq g+2$.
Moreover, in this case, the linear system fails to induce an embedding precisely along the directrix of $\pp E^\vee$.
By definition of $\B_{3,g}'$, we always have $3e_1 \geq g+2$. Thus, $\gamma(\mathcal{Z})$ is contained in at most one splitting locus, which is nonempty if and only if $g \equiv 1 \pmod 3$.
In particular:
\begin{enumerate}
\item if $g = 4$, then $\gamma(\mathcal{Z})$ is the splitting locus $(e_1, e_2) = (2, 4)$, which has codimension $1$ 
\item if $g = 7$, then $\gamma(\mathcal{Z})$ is the splitting locus $(e_1, e_2) = (3, 6)$, which has codimension $2$
\item if $g \neq 4, 7$, then $\gamma(\mathcal{Z})$ has codimension at least $3$
\end{enumerate}
Since the directrix has codimension $1$, it follows that
\[\codim(\mathcal{Z} \subset \pp \E^\vee) = \begin{cases} 2 & \text{if $g = 4$} \\ 3 & \text{if $g = 7$} \\ \geq 4 & \text{otherwise.}\end{cases} \]
By \eqref{cocomp}, we see then that $\rho''^{-1}(\mathcal{Z})$ has suitably high codimension so that \eqref{exeq1} is satisfied for all $g$ and \eqref{exeq2} is satisfied for $g \neq 4$.
\end{proof}

This completes the proof of Theorem \ref{main}(1) when $g \geq 6$.

\subsection{Low genus calculations} \label{lgcalcs}
The lemmas in this section show that the remaining Chow groups not already determined by Lemma \ref{trig-result} vanish. This is due to certain geometric phenomena that occur in low codimension when the genus is small.

\begin{lem} \label{g2} 
When $g = 2$, we have $A^*(\H_{3,2}) = 0$.
\end{lem}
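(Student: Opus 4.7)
The plan is to strengthen Lemma \ref{trig-result}, which already shows that $A^*(\H_{3,2})$ is a quotient of $\qq[a_1]/(a_1^3)$, by producing one additional codimension-$1$ relation peculiar to $g=2$ that kills $a_1$, and hence also $a_1^2$.

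The key geometric input is that, for $g=2$, the condition on the splitting type $(e_1,e_2)$ of $E$ (with $e_1\leq e_2$ and $e_1+e_2=4$) that $\Sym^3 E\otimes\det E^\vee$ be globally generated forces $2e_1\geq e_2$, whose only solution is $(2,2)$. Consequently $\B_{3,2}'\subset\B_{3,2}$ is exactly the balanced locus, and its complement is the $(1,3)$-splitting locus, of codimension $1$ in $\B_{3,2}$. This is in sharp contrast to $g\geq 3$, where the proof of Lemma \ref{trig-result} crucially relied on the complement having codimension at least $2$ (and similarly codimension at least $3$ for $g\geq 6$).

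First, I would compute the fundamental class of the $(1,3)$-splitting locus in $A^1(\B_{3,2})$ as a linear combination of $a_1$ and $a_2'$, via a Thom--Porteous/Chern-class calculation on a suitable resolution of the splitting stratum, in the spirit of the splitting-locus calculations of \cite{part1}. Excising this locus yields a relation $\alpha\, a_1+\beta\, a_2'=0$ on $\B_{3,2}'$, and hence on $\H_{3,2}$. I would then combine this with the discriminant relation
\[
(8g+12)a_1-9a_2'=28\,a_1-9\,a_2'=0
\]
from \eqref{3codim1}. Provided $(\alpha,\beta)$ is not proportional to $(28,-9)$, the two relations span $\qq\{a_1,a_2'\}$ and force $a_1=0$ in $A^1(\H_{3,2})$. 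Since $A^*(\H_{3,2})$ is already a quotient of $\qq[a_1]/(a_1^3)$, the vanishing of $a_1$ immediately implies $A^i(\H_{3,2})=0$ for all $i\geq 1$.

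The main obstacle is the explicit Chern-class computation for the $(1,3)$-locus and the verification of linear independence of the two relations. In the unlikely event that the splitting-locus relation turns out to be a scalar multiple of \eqref{3codim1}, one would have to pursue additional relations: either by analyzing higher-order push forwards from $\widetilde{\Delta}_{3,2}$ (exploiting that the non-very-ample locus $\mathcal{Z}$ may contribute extra classes in the small-$g$ regime, as flagged in the proof of Lemma \ref{trig-result}), or by describing $\B_{3,2}'$ as a classifying stack $BG$ for the automorphism group of the balanced pair $(\mathbb{P}V,\O(2)^{\oplus 2})$ and computing equivariantly.
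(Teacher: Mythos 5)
Your proposal is exactly the paper's argument: the paper also uses that for $g=2$ the complement of $\B_{3,2}'$ is the codimension-one $(1,3)$ splitting locus, computes its class via the splitting-locus formulas of \cite{L} as $s_{1,3}=a_2'-2a_1$, and combines the resulting relation with $[\Delta_{3,2}]=28a_1-9a_2'=0$ from \eqref{3codim1} to conclude $a_1=a_2'=0$. The only step you leave open --- that the two relations are independent --- is confirmed by that computation, since $(-2,1)$ is not proportional to $(28,-9)$.
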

\begin{proof}
When $g = 2$, the complement of $\B_{3,2}' \subset \B_{3,2}$ is the $(1, 3)$ splitting locus, which has codimension $1$. As a consequence, $a_1$ and $a_2'$ satisfy a relation on $\B_{3,2}'$. Using  \cite[Lemma 5.1]{L}, we calculate the class of the $(1, 3)$ splitting locus as the 
degree $1$ piece of a ratio of total Chern classes below, which we compute with the code \cite{github}:
\[0 = s_{1,3} = \left[\frac{c((\pi_* \E(-2) \otimes \pi_* \O_{\P}(1))^\vee)}{c((\pi_* \E(-1))^\vee)}\right]_1 = a_2'-2a_1\]
on $\B_{3,2}'$.
Specializing \eqref{3codim1} to $g = 2$, we also have the additional relation $0 = [\Delta_{3,2}] = 28a_1 - 9a_2'$ in $A^1(\H_{3,g})$, so we conclude $a_1 = a_2' = 0$ and hence, the Chow ring is trivial.
\end{proof}

\begin{lem} \label{van3}
For $g = 3, 4, 5$, we have $A^2(\H_{3,g}) = 0$.
\end{lem}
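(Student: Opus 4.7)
By Lemma \ref{trig-result}, $A^*(\H_{3,g})$ is already known to be a quotient of $\qq[a_1]/(a_1^3)$, so to establish the lemma it suffices to exhibit a relation $a_1^2 = 0$ in $A^2(\H_{3,g})$ for each $g \in \{3, 4, 5\}$. The plan follows the same pattern as Lemma \ref{g2}: I would enumerate all codimension-$2$ relations on $A^*(\aW_{3,g}')$ modulo the image of $(\pi' \circ \gamma' \circ i)_*$ from the trapezoid diagram \eqref{3diagram}, and then verify by a direct Schubert2 calculation (with code as in \cite{github}) that $a_1^2$ lies in the resulting ideal. The relations arise from three sources: (i) codim-$2$ splitting-locus classes in $\B_{3,g} \smallsetminus \B_{3,g}'$, computed via \cite[Lemma 5.1]{L}; (ii) the codim-$2$ Trapezoid Lemma \ref{trap} push forwards
\[ \rho^*(\pi\circ\gamma)_*\bigl(c_3(P^1_{\pp\E^\vee/\B_{3,g}'}(\W)) \cdot z^i \zeta^j\bigr), \qquad (i,j) \in \{(1,0),\,(0,1)\}; \]
and (iii) the products of the codim-$1$ relation \eqref{3codim1} with the generators $a_1, a_2, a_2', c_2 \in A^1(\B_{3,g}')$.

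For $g = 3$, the $(e_1,e_2) = (1,4)$ splitting locus has codimension $2$ in $\B_{3,3}$, and for $g = 5$, the $(2,5)$ splitting locus has codimension $2$ in $\B_{3,5}$. Pairing the corresponding splitting class in $A^2(\B_{3,g}')$ (again computed as a degree-$2$ piece of a ratio of total Chern classes) with the two push forwards in (ii) and the four products in (iii) should suffice, by the ensuing linear algebra, to force $a_1^2 = 0$.

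The main obstacle is $g = 4$: no codim-$\leq 2$ splitting locus is available in $\B_{3,4} \smallsetminus \B_{3,4}'$, and moreover, since $\mathcal{Z}$ has codimension $2$ in $\pp\E^\vee$ by the analysis in Lemma \ref{trig-result}, the preimage $\rho''^{-1}(\mathcal{Z}) \subset \widetilde{\Delta}_{3,4}$ has codimension $1$, so \eqref{exeq2} can fail and $\widetilde{\Delta}_{3,4}$ carries extra codim-$1$ Chow classes whose push forward to $\aW_{3,4}'$ is not already accounted for by (i)--(iii). To handle these, I would build a secondary resolution $\widetilde{\Delta}'_{3,4}$ supported over $\mathcal{Z}$: by \eqref{rk2}, the evaluation map \eqref{eval3} has rank exactly $2$ over $\mathcal{Z}$, so its restricted kernel is a vector sub-bundle whose fundamental class on $\aW_{3,4}'$ can be computed as $\rho^*$ of an explicit push forward from $\mathcal{Z}$ (which is itself a $\pp^0$-bundle over the $(2,4)$ splitting locus in $\B_{3,4}$). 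Adjoining this additional codim-$2$ class to the ideal generated by (ii) and (iii), and running the same Schubert2 check, should then force $a_1^2 = 0$ for $g = 4$ as well.
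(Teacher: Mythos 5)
Your proposal follows essentially the same route as the paper: for $g=3,5$ it adjoins the codimension-$2$ splitting-locus classes $s_{1,4}$, $s_{2,5}$ to the trapezoid ideal, and for $g=4$ it extracts the extra relation from the corank-$2$ kernel over the directrix $\mathcal{Z}$ (the paper writes this class as $\rho^*\gamma_*\pi_*([\mathcal{Z}]\cdot c_2(P^1_{\pp\E^\vee/\P}(\W)))$, exactly the transverse intersection you describe), verifying $a_1^2=0$ by the same Schubert2 computation. The only slips are cosmetic: $a_2$ and $c_2$ live in $A^2$, not $A^1$, and $\mathcal{Z}$ is a section of $\pp\E^\vee$ over the pullback of the $(2,4)$ splitting locus to $\P$, hence a $\pp^1$-bundle over that locus in $\B_{3,4}$.
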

\begin{proof}
We first explain the case $g = 3$. 
Here, the complement of $\B_{3,3}'$ inside $\B_{3,3}$ is the closure of the splitting locus $(e_1, e_2) = (1, 4)$, which has codimension $2$. 
 The universal formulas for classes of splitting loci \cite{L} say that the class of this unbalanced splitting locus is the degree $2$ piece of a ratio of total Chern classes, which we computed in the code \cite{github},
\[s_{1,4} = \left[\frac{c((\pi_*\E(-2)\otimes \pi_*\O_{\P}(1))^\vee)}{c((\pi_*\E(-1))^\vee)}\right]_2 = 3a_1^2+\frac{1}{2}a_2-\frac{5}{2}a_1a_2'+\frac{1}{2}a_2'^2+3c_2.\] 
It follows that $A^*(\H_{3,3})$ is a quotient of $\qq[a_1, a_2, a_2', c_2]/(I + \langle s_{1,4} \rangle)$. We checked in the code \cite{github} that the codimension $2$ piece of this ring is zero.

The case $g = 5$ is very similar so we explain it next. The complement of $\B_{3,5}'$ inside $\B_{3,5}$ is the closure of the splitting locus $(e_1, e_2) = (2, 5)$, which has codimension $2$. The class of this splitting locus is computed similarly:
\[s_{2,5} = \left[\frac{c((\pi_*\E(-3)\otimes \pi_*\O_{\P}(1))^\vee)}{c((\pi_*\E(-2))^\vee)}\right]_2 = 6a_1^2+\frac{1}{2}a_2-\frac{7}{2}a_1a_2'+\frac{1}{2}a_2'^2+6c_2.\]
Therefore, $A^*(\H_{5,3})$ is a quotient $\qq[a_1, a_2, a_2', c_2]/(I + \langle s_{2,5}\rangle)$, whose codimension $2$ piece we also checked is zero \cite{github}.

In the case $g = 4$, our additional relation will come from $\rho''^{-1}(\mathcal{Z}) \subset \widetilde{\Delta}_{3,4}$, which has codimension $1$, and whose push forward therefore determines a class that is zero in $A^2(\H_{3,4})$.
By \eqref{rk2}, we have that $\rho''^{-1}(\mathcal{Z})$ is the transverse intersection of $\rho'^{-1}(\mathcal{Z})$ with the kernel subbundle of $\gamma^*\pi^*\pi_*\gamma_* \W \to P^1_{\pp \E^\vee/\P}(\W)$. That is, our possible additional relation is given by
\begin{equation} \label{ss}
s := \pi'_* \gamma'_* i_* [\rho''^{-1}(\mathcal{Z})] = \gamma'_* \pi'_* (\rho'^*[\mathcal{Z}] \cdot \rho'^*c_2(P^1_{\pp \E^\vee/\P}(\W))) = \rho^*\gamma_* \pi_*([\mathcal{Z}] \cdot c_2(P^1_{\pp \E^\vee/\P}(\W))).
\end{equation}
It remains to compute $[\mathcal{Z}]$, which we do now.
Let $\Sigma = \gamma(\mathcal{Z}) \subset \B_{3,4}'$ be the $(2, 4)$ splitting locus. 
Using the formulas for classes of splitting loci \cite{L}, we compute
\[[\Sigma] = s_{1,4} = \left[\frac{c(((\pi_*\E(-3) \otimes \pi_*\O_{\P}(1))^\vee)}{c(((\pi_*\E(-2))^\vee)}\right]_1 =  a_2' - 3a_1. \]

Over $\Sigma$, there is a sequence
\begin{equation} \label{onSigma}
0 \rightarrow \pi^*\mathcal{M}(-2) \rightarrow \E^\vee|_{\Sigma}  \rightarrow \pi^*\mathcal{N}(-4) \rightarrow 0
\end{equation}
for line bundles $\mathcal{M}$ and $\mathcal{N}$ on $\Sigma$. 
Let $m = c_1(\mathcal{M})$ and $n = c_1(\mathcal{N})$.
The directrix over $\Sigma$ is $\mathcal{Z} = \pp (\pi^*\mathcal{M}(-2)) \subset \pp \E^\vee|_{\Sigma}$. By \cite[Proposition 9.13]{EH}, the fundamental class of $\mathcal{Z}$ inside $\pp \E^\vee|_{\Sigma}$ is $\zeta + c_1(\pi^*\mathcal{N}(-4)) = \zeta+ n - 4z$. Considering Chern classes in the exact sequence \eqref{onSigma}, we learn (recall $a_1' = g+2 = 6$)
\begin{align*} -a_1|_{\Sigma} - 6z = c_1(\E^\vee|_{\Sigma}) &= m - 4z + n -2z &\quad &\Rightarrow &\quad m+n &= -a_1|_{\Sigma}
\intertext{and}
a_2|_{\Sigma} + (a_2'|_{\Sigma}) \cdot z = c_2(\E^\vee|_{\Sigma}) &= (m - 4z)(n - 2z) \\
&= mn -c_2 - (2m + 4n)z &\quad &\Rightarrow &\quad 2m+4n &= -a_2'|_{\Sigma}.
\end{align*}
In particular, $n =\left. \left(a_1 - \frac{a_2'}{2}\right)\right|_{\Sigma}$. Hence, the fundamental class of $\mathcal{Z}$ inside  all of $\pp \E^\vee$ is 
\[[\mathcal{Z}] = (\zeta + a_1 - \tfrac{a_2'}{2} - 4z) \cdot [\Sigma] = (\zeta + a_1 - \tfrac{a_2'}{2} - 4z)(a_2' - 3a_1).\]
This allows us to compute $s$ in \eqref{ss}, and
our code confirms that the codimension $2$ piece of $\qq[a_1, a_2, a_2', c_2]/(I + \langle s \rangle)$ is zero \cite{github}.
\end{proof}

Together, Lemmas \ref{trig-result}, \ref{g2} and \ref{van3} determine the rational Chow ring of $\H_{3,g}$ for all $g$:
 \[A^*(\H_{3,g}) = \begin{cases} \qq & \text{if $g = 2$} \\ \qq[a_1]/(a_1^2) & \text{if $g = 3, 4, 5$} \\ \qq[a_1]/(a_1^3) & \text{if $g \geq 6$.} \end{cases}\]
This completes the proof of Theorem \ref{main}(1).

\section{The Chow ring in degree $4$} \label{re4}
\subsection{Set up}
We begin by briefly recalling the linear algebraic data associated to a degree $4$ cover, as developed by Casnati--Ekedahl \cite{CE}. For more details in our context, see \cite[Section 3.2]{part1}. Given a degree $4$ cover $\alpha: C \to \pp^1$, we associate two vector bundles on $\pp^1$:
\[E_\alpha := (\alpha_*\O_C/\O_{\pp^1})^\vee = \ker(\alpha_*\omega_\alpha \to \O_{\pp^1}) \qquad \text{and} \qquad F_\alpha := \ker(\Sym^2 E_\alpha \to \alpha_*\omega_{\alpha}^{\otimes 2}).\]
The first is rank $3$ and the second is rank $2$.
If $C$ has genus $g$, then both bundles have degree $g+3$.
Geometrically, the curve $C$ is embedded in $\gamma:\p E^{\vee}_{\alpha}\rightarrow \p^1$ as the zero locus of a section
\[
\delta_{\alpha} \in H^0(\p E_{\alpha}^{\vee},\O_{\p E^{\vee}_{\alpha}}(2)\otimes \gamma^*F_{\alpha}^{\vee}).
\]
In each fiber of $\gamma$, the four points are the base locus of a pencil of conics parametrized by $F_\alpha$.

Conversely, given vector bundles $E, F$ of ranks $3$ and $2$, both of degree $g + 3$, we wish to characterize when a section
\[\delta \in H^0(\pp E^\vee, \O_{\pp E^\vee}(2) \otimes \gamma^* F^{\vee})\]
fails to produce a smooth degree $4$, genus $g$ cover.

\begin{lem}\label{4limits}
Suppose $E, F, \delta$ are as above with $F^\vee \otimes \Sym^2 E$ globally generated.
If the zero locus $C = V(\delta)$ is not an irreducible, smooth quadruple cover of $\p^1$, then there is a point $p\in C$ such that $\dim T_p C \geq 2$.
\end{lem}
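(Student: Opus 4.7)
I would mirror the strategy of Lemma \ref{3limits}, splitting on whether $V(\delta)$ has the expected codimension 2 in the 3-fold $\p E^\vee$. If $\delta$ vanishes on a component of dimension at least 2 (in particular if $\delta = 0$), any point on such a component already has $\dim T_pC \geq 2$, and we are done. So the substantive case is when $C := V(\delta)$ is pure of dimension 1, which is equivalent to $\delta$ being a regular section of the rank-2 bundle $\O(2) \otimes \gamma^* F^\vee$.

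In that case, the Koszul complex
\[ 0 \to \O_{\p E^\vee}(-4) \otimes \gamma^*\det F \to \O_{\p E^\vee}(-2) \otimes \gamma^* F \to \O_{\p E^\vee} \to \O_C \to 0 \]
resolves $\O_C$, and I would compute $h^0(C,\O_C)$ by pushing it forward via the Leray spectral sequence for $\gamma$. All direct images $R^b\gamma_*\O(-2)$ vanish, so the middle term contributes nothing. For the left term, Serre duality (using $\omega_\gamma = \O(-3)\otimes \gamma^*\det E$) gives $R^2\gamma_*\O(-4) \cong E^\vee \otimes \det E^\vee$, hence $R^2\gamma_*(\O(-4)\otimes\gamma^*\det F) \cong E^\vee$ after using $\det E \cong \det F \cong \O_{\p^1}(g+3)$. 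Writing $E \cong \bigoplus_i \O(e_i)$ with $e_1\leq e_2\leq e_3$ and $F \cong \O(f_1)\oplus \O(f_2)$ with $f_1\leq f_2$, the global generation of $F^\vee \otimes \Sym^2 E$ gives $2e_1 \geq f_2 \geq \lceil(g+3)/2\rceil$, so for $g\geq 2$ every $e_i \geq 1$ and $h^0(\p^1,E^\vee) = 0$. Splitting the Koszul resolution into two short exact sequences then forces $h^0(C,\O_C) = h^0(\p E^\vee,\O) = 1$, so $C$ is connected.

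The final step is to turn connectedness into a singular point. If $\alpha: C \to \p^1$ is finite, then by miracle flatness it is finite flat, of degree 4 (computed as the intersection of two relative conics in a generic $\p^2$-fiber). A connected smooth scheme of dimension 1 is irreducible, so if $C$ is not a smooth irreducible quadruple cover it must have a singular point, at which $\dim T_pC \geq 2$. Otherwise $\alpha$ fails to be finite, meaning $C$ has a 1-dimensional component $L$ contained in some fiber $\gamma^{-1}(p_0) \cong \p^2$. Since the total degree of $C$ over $\p^1$ equals $4$, the curve $C$ must also have a horizontal component; connectedness forces one such horizontal component to meet $L$, and at any intersection point $C$ has at least two local branches, again giving $\dim T_pC \geq 2$.

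The main obstacle I anticipate is the hypercohomology computation in the second paragraph: correctly identifying the twist in $R^2\gamma_*\O(-4)$ via Serre duality with the subspace convention, and then verifying that the global generation hypothesis on $F^\vee \otimes \Sym^2 E$ yields the strict positivity of the splitting type of $E$ needed to kill $h^0(E^\vee)$.
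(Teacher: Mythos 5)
Your proof is correct, and it takes a genuinely different route through the hardest case. Both arguments ultimately rest on the same positivity input (global generation of $F^\vee\otimes\Sym^2 E$ forces $2e_1\geq f_2\geq (g+3)/2$, hence $h^0(\pp^1,E^\vee)=0$), but you deploy it differently. The paper splits on connectedness: it rules out a disconnected $C$ with all components finite over $\pp^1$ by noting that $\alpha_*\O_C=\O\oplus E^\vee$ would then force a degree-$0$ summand of $E$, and it handles a vertical component by analyzing when a pencil of conics acquires a common component, concluding that some of the four points of the generic fiber must specialize into that component and create a singularity. You instead prove connectedness \emph{unconditionally} for any pure one-dimensional $V(\delta)$ by pushing the Koszul resolution down $\gamma$ (your relative duality computation $R^2\gamma_*\O(-4)\cong E^\vee\otimes\det E^\vee$ and the resulting extension $0\to\O\to\gamma_*\O_C\to E^\vee\to 0$ are correct under the paper's subspace convention), which makes the vertical-component case formal: $C$ then has at least two irreducible components yet is connected, so some point lies on two branches and has tangent space of dimension at least $2$. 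This buys you a uniform argument that entirely avoids the geometry of degenerate pencils of conics, at the cost of the Leray/duality computation; the paper's version is more elementary and more visibly geometric. One small imprecision: connectedness does not literally force a \emph{horizontal} component to meet $L$ (the adjacent component could a priori be another vertical one), but all you use is that some point lies on two distinct components, which connectedness together with reducibility does give, so the argument stands.
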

\begin{proof}
If $C$ is connected or has a component of dimension at least $2$ then the lemma is immediate. Suppose $C$ is $1$-dimensional and disconnected. We first rule out the case in which $C$ has at least $2$ connected components, both mapping finitely onto $\p^1$. In this case, $\alpha_* \O_C$ has  more than one $\O$ factor; then $E$ has a degree $0$ summand, so $\Sym^2 E \otimes F^\vee$ would have a negative degree summand, which we are assuming is not the case. 

Next suppose $C$ has a component $C_0$ which does not map finitely onto $\p^1$. Then $C_0$ must be contained in a fiber of $\gamma:\p  E^{\vee}\rightarrow \p^1$. The restriction of the zero locus of $\delta$ to a fiber is the intersection of two (possibly singular) conics in $\p^2$. The only way for such an intersection to have a $1$-dimensional component is for the conics to have a common component $C_0$. Hence, some fiber of $C$ is equal to $C_0$ union a finite subscheme of length less than $4$ (length $1$ if $C_0$ is a line, empty if $C_0$ is a conic).
\begin{center}
\includegraphics[width=4.5in]{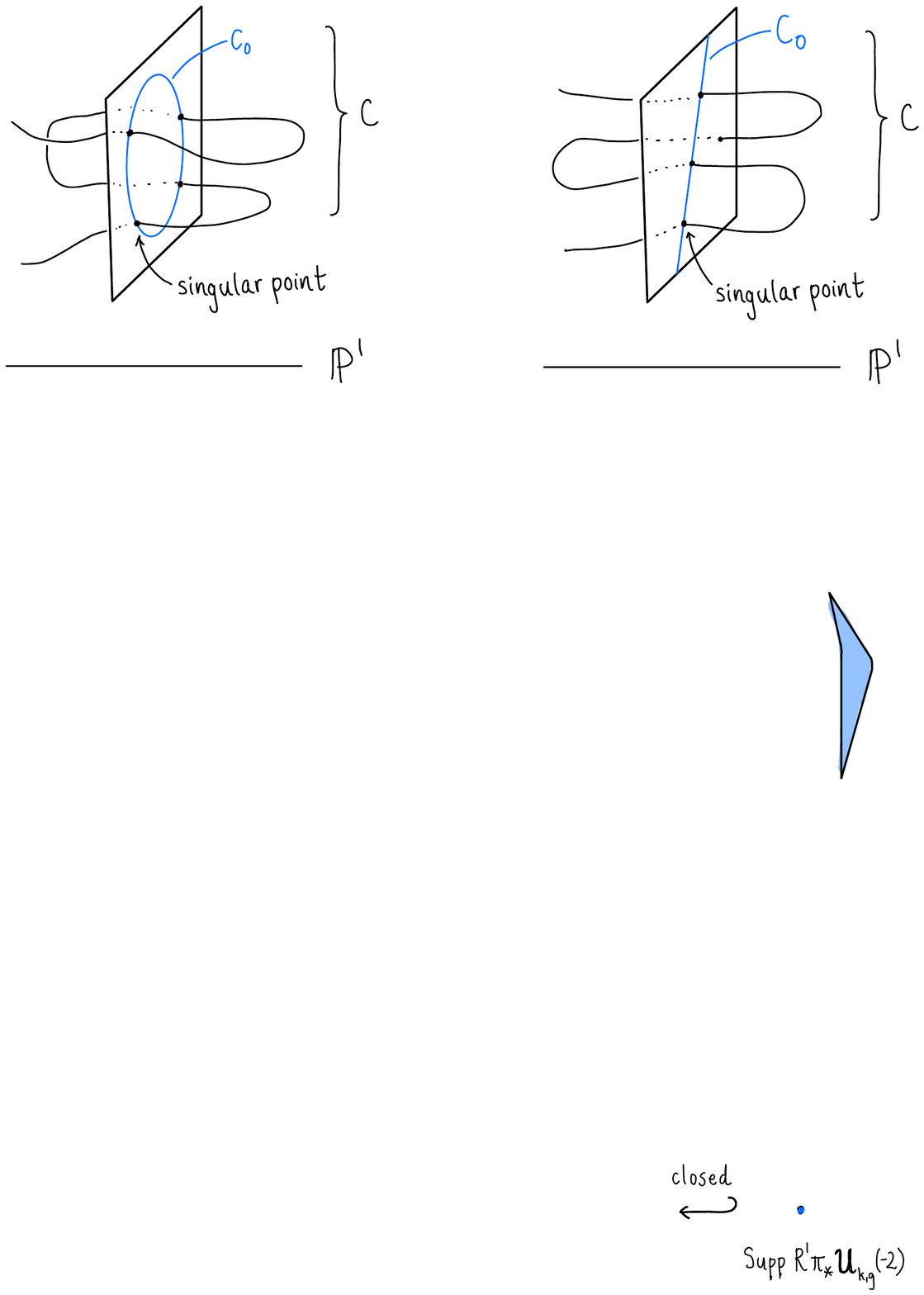}
\end{center}
Since the generic fiber consists of $4$ points, some of those $4$ points must specialize to $C_0$, which means $C$ is singular at those points on $C_0$ (and $C$ is connected).
\end{proof}

The association of $\alpha: C \to \pp^1$ with the pair $(E_\alpha, F_\alpha)$ gives rise to map $\H_{4,g}$ to the moduli stack $\B_{4,g}$ of pairs of vector bundles on $\pp^1$-bundles, as defined in \cite[Definition 5.2]{part1}. 
Let $\pi: \P \to \B_{4,g}$ be the universal $\pp^1$-bundle. Let $\E$ be the universal rank $3$ vector bundle on $\P$, and $\F$ the universal rank $2$ bundle on $\P$.
Continuing the notation of \cite{part1}, let $z = c_1(\O_{\P}(1))$ and define 
classes $a_i, b_i \in A^i(\B_{4,g})$ and $a_i', b_i' \in A^{i-1}(\B_{4,g})$ by the formula
\[c_i(\E) = \pi^*a_i + \pi^*a_i'z \qquad \text{and} \qquad c_i(\F) = \pi^*b_i + \pi^*b_i' z.\]
(Note that there is a ``determinant compatibility" condition which implies $a_1 = b_1$, see \cite[Equation 5.4]{part1}.)
We also define $c_2 = -\pi_*(z^3) \in A^2(\B_{4,g})$, which is the pullback of the universal second Chern class on $\BSL_2$. By \cite[Equation 5.5]{part1}, 
\begin{gather} \label{b4fact}
\text{$a_1, a_2, a_3, a_2', a_3', b_2, b_2', c_2$ generate $A^*(\B_{4,g})$ and satisfy} \\
\text{no relations in degrees up to $g + 3$}. \notag
\end{gather}
We call the pullbacks of $\E$ and $\F$ to $\H_{4,g}$ the \emph{CE bundles} (these are the bundles appearing in the Casnati--Ekedahl resolution for the universal curve).
We call the pullbacks to $\H_{4,g}$ of the associated classes in \eqref{b4fact} the
\emph{CE classes}. By \cite[Theorem 3.10]{part1}, the CE classes are tautological and generate the tautological ring.

Up to this point, the set up has been quite similar to degree $3$. However, unlike in degree $3$, the full Hurwitz stack $\H_{4,g}$ \emph{cannot} be realized as an open substack of a vector bundle over an open substack of $\B_{4,g}$. 
This is why we were unable to determine the full Chow ring of $\H_{4,g}$ with our techniques. We now proceed in two steps. First, in Section \ref{rels-step1} we shall produce several relations among CE classes on $\H_{4,g}$ using principal parts bundles. Then, in Section \ref{rels-step2}, we shall define an open substack $\H_{4,g}^\circ \subset \H_{4,g}$, which does lie inside a vector bundle over an open substack $\B_{4,g}^\circ \subset \B_{4,g}$, and use it to demonstrate that we have found all relations in degrees up to roughly $g/4$. It may help to think of $\H_{4,g}^\circ$ as an open substack that is ``large enough to witness the independence of many CE classes."


\subsection{Relations among CE classes} \label{rels-step1}
Let $\E$ and $\F$ be the CE bundles on the universal $\pp^1$-bundle $\pi: \P \to \H_{4,g}$. Let $\gamma: \pp \E^\vee \to \P$ be the structure map. 
We define a rank $2$ vector bundle on $\pp \E^\vee$ by $\W := \O_{\pp \E^\vee}(2) \otimes \F^\vee$. By the Casnati--Ekedahl theorem in degree 4, see \cite[Equation 3.7]{part1} or \cite{CE}, the universal curve $\C \subset \pp \E^\vee$ determines a global section $\delta^{\mathrm{\mathrm{univ}}}$ of $\W$, whose vanishing locus is $V(\delta^{\mathrm{\mathrm{univ}}}) = \C \subset \pp \E^\vee$. 

The global section $\delta^{\mathrm{\mathrm{univ}}}$ induces a global section $\delta^{\mathrm{\mathrm{univ}}}{'}$ of the principal parts bundle $P_{\pp \E^\vee/\H_{4,g}}^1(\W)$ on $\pp \E^\vee$, which records the value and derivatives of $\delta^{\mathrm{\mathrm{univ}}}$.
Now consider the tower
\begin{center}
    \begin{tikzcd}
{G(2,T_{\p\E^{\vee}/\H_{4,g}})} \arrow[r, "a"'] & \p\E^{\vee} \arrow[r, "\gamma"']              & \P \arrow[r, "\pi"']               & {\H_{4,g}},
    \end{tikzcd}
\end{center}
where
 $G(2, T_{\pp \E^\vee/ \H_{4,g}})$ parametrizes $2$ dimensional subspaces of the vertical tangent space of $\pp \E^\vee$ over $\H_{4,g}$. 
 Dualizing the tautological sequence on $G(2,T_{\p\E^{\vee}/\H_{4,g}^\circ})$ we obtain a filtration 
\[0 \rightarrow \Omega_y \rightarrow a^* \Omega_{\p\E^{\vee}/\H_{4,g}} \to \Omega_x \rightarrow 0,\]
where $\Omega_y$ is rank $1$ and $\Omega_x$ is rank $2$. Let $P_{\pp \E^\vee/\H_{4,g}}^{\{1, x\}}(\W)$ be the bundle of restricted principal parts as defined in Section \ref{dir-ref}.

 On $G(2, T_{\pp \E^\vee/\H_{4,g}})$, we obtain a global section, call it $\delta^{\mathrm{\mathrm{univ}}}{''}$, of $P_{\pp \E^\vee/\H_{4,g}}^{\{1, x\}}(\W)$ by composing the section $a^* \delta^{\mathrm{\mathrm{univ}}}{'}$ with the quotient $a^* P_{\pp \E^\vee/\H_{4,g}}^1(\W) \to P_{\pp \E^\vee/\H_{4,g}}^{\{1, x\}}(\W)$.
 The vanishing locus of $\delta^{\mathrm{\mathrm{univ}}}{''}$
is the space of pairs $(p, S)$ where $p \in V(\delta^{\mathrm{\mathrm{univ}}}) \subset \pp \E^\vee$ and $S$ is a two-dimensional subspace of the tangent space of the fiber of $V(\delta^{\mathrm{univ}}) \to \H_{4,g}$ through $p$. But $V(\delta^\mathrm{univ}) = \C \to \H_{4,g}$ is smooth of relative dimension $1$. Thus, $\delta^{\mathrm{\mathrm{univ}}}{''}$ must be non-vanishing on $G(2, T_{\pp \E^\vee/\H_{4,g}})$. 

Since $P_{\pp \E^\vee/\H_{4,g}}^{\{1, x\}}(\W)$ has a non-vanishing global section, its top Chern class, $c_6(P_{\pp \E^\vee/\H_{4,g}}^{\{1,x\}}(\W))$, must be $0$. Moreover, the push forward of this class times any class on $G(2, T_{\pp \E^\vee/\H_{4,g}})$ is also zero. Such relations are generated by the following classes.
\begin{lem} \label{CErels3}
Let $\tau = c_1(\Omega_y^\vee)$ where $\Omega_y^\vee$ is the tautological quotient bundle on $G(2, T_{\pp \E^\vee/ \H_{4,g}})$. Let $\zeta = \O_{\pp \E^\vee}(1)$ and $z = c_1(\O_{\P}(1))$. Then all classes of the form
\begin{equation} \label{rel4first}
\pi_*\gamma_*a_*(c_6(P^{\{1, x\}}_{\p\E^{\vee}/\H_{4,g}}(\W)) \cdot \tau^i \zeta^j z^k) 
\end{equation}
are zero in $R^*(\H_{4,g}) \subseteq A^*(\H_{4,g})$.
\end{lem}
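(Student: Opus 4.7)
My plan is to establish the lemma in two steps: first, to show that $c_6(P^{\{1,x\}}_{\pp\E^\vee/\H_{4,g}}(\W))$ vanishes as a class on $G(2, T_{\pp\E^\vee/\H_{4,g}})$; and second, to show that for every $i, j, k$ the pushforward in \eqref{rel4first} is a tautological class on $\H_{4,g}$. Combining these gives the assertion that each such class is zero in $R^*(\H_{4,g})$.

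For the first step, the plan is to verify that the induced section $\delta^{\mathrm{univ}''}$ constructed in the paragraph preceding the lemma is nowhere-vanishing. Once this is done, the top Chern class of the rank-$6$ bundle $P^{\{1,x\}}_{\pp\E^\vee/\H_{4,g}}(\W)$ (whose associated graded, by Section \ref{dir-ref}, has pieces $\W$ of rank $2$ and $\Omega_x \otimes \W$ of rank $4$) must vanish. A zero of $\delta^{\mathrm{univ}''}$ at a point $(p, S) \in G(2, T_{\pp\E^\vee/\H_{4,g}})$ would mean $\delta^{\mathrm{univ}}(p) = 0$ together with the vanishing of the derivative of $\delta^{\mathrm{univ}}$ in every direction of $S$; equivalently, $S \subseteq T_p\C_{\mathrm{fib}}$, the tangent space to the fiber of $\C \to \H_{4,g}$ through $p$. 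Since $\C \to \H_{4,g}$ is smooth of relative dimension $1$, this fiber tangent space is $1$-dimensional, so no $2$-dimensional subspace $S$ can sit inside it.

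For the second step, my plan is to expand $c_6(P^{\{1,x\}}_{\pp\E^\vee/\H_{4,g}}(\W))$ via the filtration from Section \ref{dir-ref} as a polynomial in the Chern classes of $a^*\W$ and $\Omega_x \otimes a^*\W$. Since $\W = \O_{\pp\E^\vee}(2) \otimes \gamma^*\F^\vee$ and $\Omega_x$ is the rank-$2$ tautological quotient of $a^*\Omega_{\pp\E^\vee/\H_{4,g}}$, these Chern classes are polynomials in $\zeta = c_1(\O_{\pp\E^\vee}(1))$, $z = c_1(\O_{\P}(1))$, $c_i(\E)$, $c_i(\F)$, and the Chern classes of the tautological quotient bundle on the Grassmann bundle (in particular $\tau$). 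Multiplying by $\tau^i \zeta^j z^k$ and pushing down successively by $a_*$, $\gamma_*$, and $\pi_*$ via the Grassmann bundle theorem \cite{GSS}, the projective bundle theorem, and the standard formula for $\pi_*(z^m)$ recalled in Section \ref{pandg}, produces a polynomial expression in the CE classes $a_i, a_i', b_i, b_i', c_2$. Each of these is tautological by \cite[Theorem 3.10]{part1}, so the pushforward lives in $R^*(\H_{4,g})$.

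The only substantive point is the geometric observation in step one: smoothness of the universal curve rules out any vanishing of $\delta^{\mathrm{univ}''}$. The rest is a formal Chern class and pushforward computation, so I do not anticipate any serious obstacle beyond bookkeeping.
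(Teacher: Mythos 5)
Your proposal is correct and follows essentially the same route as the paper: the key point in both is that a zero of $\delta^{\mathrm{univ}}{''}$ at $(p,S)$ would force the $2$-plane $S$ into the $1$-dimensional tangent space of the smooth fiber of $\C \to \H_{4,g}$, so the section is nowhere vanishing and $c_6$ of the rank-$6$ bundle vanishes. The remaining pushforward computation expressing the classes as polynomials in the tautological CE classes is exactly what the paper defers to its Macaulay2 code.
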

It is straightforward for a computer to compute such push forwards as polynomials in the CE classes. We describe the ideal these push forwards generate in Section \ref{ach}

\subsection{All relations in low codimension} \label{rels-step2}
We now recall the construction of our ``large open" substack $\H_{4,g}^{\circ}\subset \H_{4,g}$.
We start with $\B_{4,g}$, the moduli space of pairs of vector bundles $E$ of rank $3$, degree $g+3$ and $F$ of rank $2$, degree $g+3$ on $\pp^1$-bundles together with an isomorphism of their determinants (see \cite[Section 5.2]{part1}). Now, working over $\B_{4,g}$, let $\E$ and $\F$ be the universal bundles on $\pi:\P\rightarrow \B_{4,g}$ and let $\gamma: \pp \E^\vee \to \P$ be the structure map. Define $\mathcal{W} := \gamma^* \F^\vee \otimes \O_{\pp \E^\vee}(2)$, and let $\U_{4,g} := \gamma_* \mathcal{W} = \F^\vee \otimes \Sym^2 \E$.
 We consider an open substack $\B^{\circ}_{4,g}\subset \B_{4,g}$, defined by a certain positivity condition for the bundle $\aV_{4,g}$
    \begin{equation} \label{defb4c}
    \B^{\circ}_{4,g}:=\B_{4,g}\smallsetminus \Supp R^1\pi_*(\aV_{4,g}(-2)).
    \end{equation}
    Let $\H^{\circ}_{4,g}$ denote the base change of $\H_{4,g}\rightarrow \B_{4,g}$ along the open embedding $\B_{4,g}^{\circ}\hookrightarrow \B_{4,g}$.

\begin{figure}[h!]
\includegraphics[width=6in]{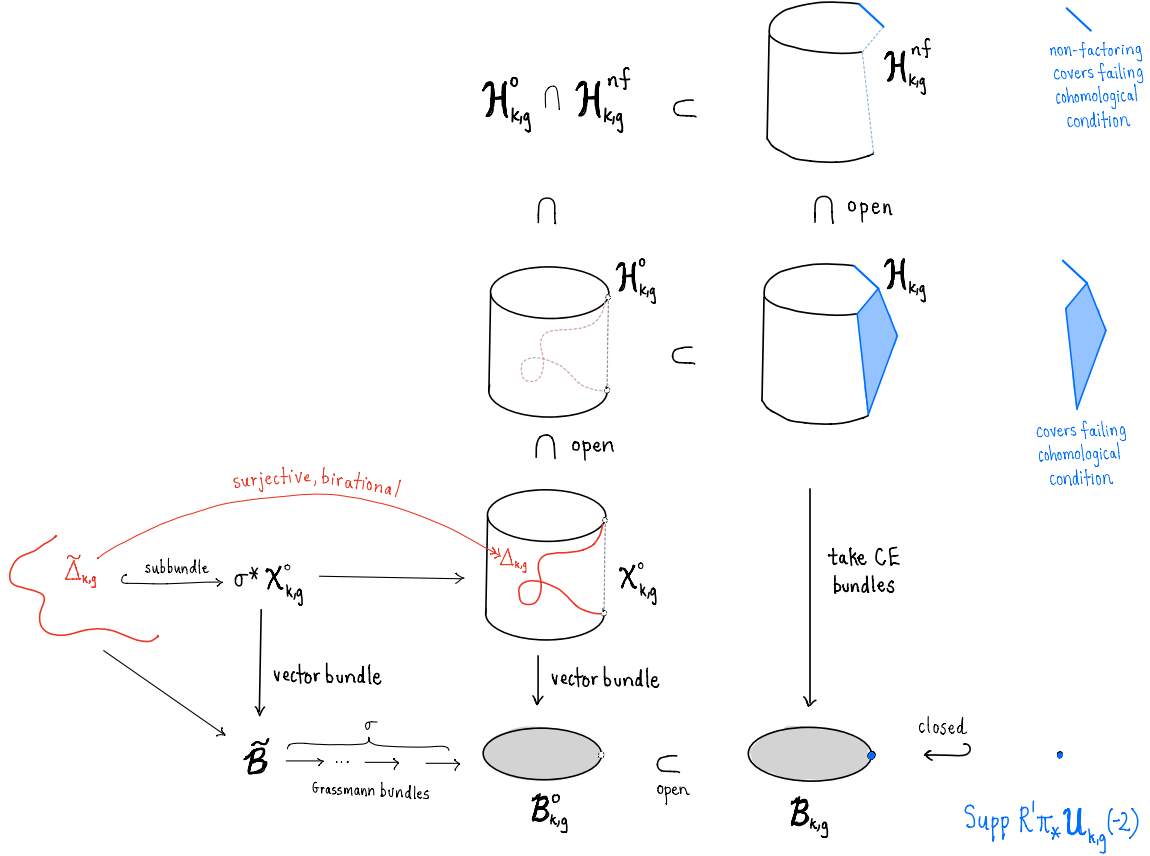}
\caption{Summary of the method}
\label{sumfig}
\end{figure}

\begin{rem}
We note that the complement of $\mathcal{H}^\circ_{4,g} \subset \H_{4,g}$ (represented in blue in the right of Figure \ref{sumfig}) contains covers that factor through a curve of low genus (see \cite[p. 21-22]{part1}). Thus, the codimension of the complement of $\H^\circ_{4,g} \subset \H_{4,g}$ is $2$. However, upon restricting to non-factoring covers, the codimension of the complement of $\H^{\circ}_{4,g} \cap \H_{4,g}^{\nf} \subset \H_{4,g}^{\nf}$ has codimension at least $\frac{g+3}{4} - 4$ \cite[Lemma 5.5]{part1}. In this sense, $\H_{4,g}^{\mathrm{nf}}$ and $\H_{4,g}^\circ$ are ``good approximations" to each other. This is what allows us to find stabilization results for $\dim A^i(\H_{4,g}^{\mathrm{nf}})$.
\end{rem}

Over $\B^{\circ}_{4,g}$, we see that $\mathcal{X}_{4,g}^{\circ}:=\pi_*\aV_{4,g}|_{\B^{\circ}_{4,g}}$ is a vector bundle whose fibers correspond to sections of $\aV_{4,g}$. 
The open $\H_{4,g}^\circ$ is contained in the open $\H_{4,g}'$ of \cite[Lemma 5.3]{part1}, so
that lemma implies $\H^{\circ}_{4,g} \to \B_{4,g}^\circ$
 factors through an open embedding in $\mathcal{X}^{\circ}_{4,g}$. We define
    \[
    \Delta_{4,g}:=\mathcal{X}^{\circ}_{4,g}\smallsetminus \H^{\circ}_{4,g},
    \]
represented in red in the middle column of Figure \ref{sumfig}.
Now we wish to use the excision
to determine the Chow ring of $\H_{4,g}^\circ$ in degrees up to $\frac{g+3}{4} - 4$. Note that $A^*(\mathcal{X}^\circ_{4,g}) \cong A^*(\B_{4,g}^\circ)$, and we have already determined the latter in degrees up to $\frac{g+3}{4} - 4$ by \cite[Equation 5.6]{part1}.

The next step is to construct a space $\widetilde{\Delta}_{4,g}$ (pictured in red on the far left of Figure \ref{sumfig}), which surjects properly onto $\Delta_{4,g}$. With rational coefficients, the push forward $\widetilde{\Delta}_{4,g} \to \Delta_{4,g}$ will be surjective on Chow groups.
Thus, pushing forward all classes from $\tilde{\Delta}_{4,g}$ will produce all relations needed to describe $\H_{4,g}^\circ$ as a quotient of $A^*(\mathcal{X}^\circ_{4,g}) \cong A^*(\B_{4,g}^\circ)$.

Each geometric point of $\aW_{4,g}^\circ$ corresponds to 
a tuple $(E, F, \delta)$ where $E, F$ are vector bundles on $\pp^1$ and $\delta \in H^0(\pp E^\vee, F^\vee \otimes \O_{\pp E^\vee}(2))$.
We now use restricted bundles of relative principal parts for $\p \E^{\vee}\rightarrow \B_{4,g}^\circ$ to define a space parametrizing triples
\[((E, F, \delta) \in \aW_{4,g}^\circ, \  p \in V(\delta), \ S \subset T_p V(\delta) \text{ of dimension 2}).\]
Let $a:G(2,T_{\p\E^{\vee}/\B_{4,g}^\circ})\rightarrow \p\E^{\vee}$ be the Grassmann bundle of $2$-planes in the relative tangent bundle. Dualizing the tautological sequence on $G(2,T_{\p\E^{\vee}/\B_{4,g}^\circ})$ we obtain a filtration 
\[0 \rightarrow \Omega_y \rightarrow a^* \Omega_{\p\E^{\vee}/\B_{4,g}^\circ} \to \Omega_x \rightarrow 0,\]
where $\Omega_y$ is rank $1$ and $\Omega_x$ is rank $2$.
Using the bundle of restricted principal parts constructed in Section \ref{dir-ref}, we obtain an evaluation map
\begin{equation} \label{eval4}
a^*\gamma^*\pi^*\pi_*\gamma_*\W\cong a^*\gamma^*\pi^*\aW_{4,g}^\circ \rightarrow P^1_{\pp \E^\vee/\B_{4,g}^\circ}(\W) \rightarrow P^{\{1,x\}}_{\p\E^{\vee}/\B^\circ_{4,g}}(\W),
\end{equation}
which we claim is surjective.
The rightmost map from principal parts to restricted principal parts is always a surjection. Thus, it suffices to show that the map 
$
\gamma^*\pi^*\aW_{4,g}^\circ \rightarrow P^1_{\p\E^{\vee}/\B^{\circ}_{4,g}}(\W)
$
is surjective. By definition of $\B_{4,g}^\circ$ (see \eqref{defb4c}), we have $R^1\pi_*[(\gamma_* \W) \otimes \O_{\P}(-2)] = 0$, so the surjectivity follows from Lemma \ref{famjet}.

We define $\tilde{\Delta}_{4,g}$ to be the kernel bundle of \eqref{eval4}. We have the following ``trapezoid" diagram:
\begin{equation}\label{4diagram}
    \begin{tikzcd}
{\tilde{\Delta}_{4,g}} \arrow[r, hook] \arrow[rd, "\rho''"'] & a^*\gamma^*\pi^*\aW_{4,g}^\circ \arrow[r, "a'"] \arrow[d, "\rho'"] & \gamma^*\pi^*\aW_{4,g}^\circ \arrow[r, "\gamma'"] \arrow[d] & \pi^*\aW_{4,g}^\circ \arrow[r, "\pi'"] \arrow[d] & \aW_{4,g}^\circ \arrow[d, "\rho"] \\
                                                             & {G(2,T_{\p\E^{\vee}/\B^\circ_{4,g}})} \arrow[r, "a"'] & \p\E^{\vee} \arrow[r, "\gamma"']              & \P \arrow[r, "\pi"']               & {\B_{4,g}^\circ} 
\end{tikzcd}
\end{equation}

\begin{prop}\label{4image}
Let $\tau = c_1(\Omega_y^\vee)$ where $\Omega_y^\vee$ is the tautological quotient line bundle on $G(2, T_{\pp \E^\vee/ \B_{4,g}^\circ})$. Let $\zeta = \O_{\pp \E^\vee}(1)$ and $z = c_1(\O_{\P}(1))$. 
Let $I$ be the ideal generated by 
\begin{equation} \label{relsgen}
\pi_*\gamma_*a_*(c_6(P^{\{1, x\}}_{\p\E^{\vee}/\B^\circ_{4,g}}(\W)) \cdot \tau^i \zeta^j z^k) \qquad \text{for } 0 \leq i, j \leq 2, \ 0 \leq k \leq 1.
\end{equation}
Then $A^*(\H_{4,g}^\circ) \cong A^*(\B_{4,g}^\circ)/I$. Together with $a_1 = b_1$, the classes in \eqref{rel4first} therefore generate all relations among the CE classes on $\H_{4,g}$ in degrees less than $\frac{g+3}{4} - 4$.
\end{prop}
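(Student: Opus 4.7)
The plan is to apply the Trapezoid Lemma (Lemma \ref{trap}) to the diagram \eqref{4diagram}, taking $B = \B_{4,g}^\circ$, $\widetilde{B} = G(2, T_{\pp \E^\vee/\B_{4,g}^\circ})$, $X = \aW_{4,g}^\circ$, $V = P^{\{1,x\}}_{\pp \E^\vee/\B_{4,g}^\circ}(\W)$, $\sigma = \pi \circ \gamma \circ a$, and $\phi$ the evaluation map \eqref{eval4}. This $\phi$ was already verified to be a surjection of vector bundles (via Lemma \ref{famjet} together with the defining positivity condition for $\B_{4,g}^\circ$), so the equality case of Lemma \ref{trap} is what I need. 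The bundle $V$ has rank $6$ by the vanishing-order filtration of Proposition \ref{parts}: it is an extension of $\Omega_x \otimes \W$ (rank $4$) by $\W$ (rank $2$), matching the $c_6(V)$ appearing in \eqref{relsgen}.

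Next I would verify that $\pi' \circ \gamma' \circ a' \circ \iota$ has image equal to $\Delta_{4,g}$. One direction is straightforward: a point of $\widetilde{\Delta}_{4,g}$ is a triple $((E,F,\delta), p, S)$ with $\delta(p)=0$ and $d\delta|_S = 0$, so $S \subseteq T_p V(\delta)$ and therefore $(E,F,\delta) \in \Delta_{4,g}$. The other direction is precisely Lemma \ref{4limits}: it supplies, for each $(E,F,\delta)\in\Delta_{4,g}$, a point $p \in V(\delta)$ with $\dim T_p V(\delta) \geq 2$, from which any $2$-plane $S \subseteq T_p V(\delta)$ builds a preimage. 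The map $\widetilde{\Delta}_{4,g} \to \Delta_{4,g}$ is proper, hence surjective on rational Chow groups. Combining this with excision for $\H_{4,g}^\circ = \aW_{4,g}^\circ \smallsetminus \Delta_{4,g}$ and the equality case of Lemma \ref{trap} yields
\[A^*(\H_{4,g}^\circ) \cong A^*(\B_{4,g}^\circ) / \langle \sigma_*(c_6(V) \cdot \alpha_i) \rangle,\]
where the $\alpha_i$ range over a generating set for $A^*(\widetilde{B})$ as an $A^*(\B_{4,g}^\circ)$-module. Since $\widetilde{B} \to \pp \E^\vee \to \P \to \B_{4,g}^\circ$ is a tower of Grassmann and projective bundles with fiber dimensions $2$, $2$, $1$ respectively, iterating the projective and Grassmann bundle theorems of Section \ref{pandg} shows that $\{\tau^i \zeta^j z^k : 0 \leq i,j \leq 2,\ 0 \leq k \leq 1\}$ is such a generating set, matching the indexing in \eqref{relsgen} and giving the first assertion.

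For the final sentence, I would appeal to \cite[Equation 5.6]{part1} to note that in degrees below $\frac{g+3}{4}-4$, the ring $A^*(\B_{4,g}^\circ)$ is the free graded $\qq$-algebra on the CE classes modulo only the determinant-compatibility relation $a_1 = b_1$. Thus in that range the isomorphism above identifies the ideal of relations among CE classes on $\H_{4,g}^\circ$ with $\langle a_1 - b_1 \rangle$ together with the classes \eqref{relsgen}. Since restriction $R^*(\H_{4,g}) \to R^*(\H_{4,g}^\circ)$ sends CE classes to CE classes, any relation among the CE classes on $\H_{4,g}$ in the stated range restricts to such a relation, hence lies in this ideal; conversely, $a_1 = b_1$ and the classes \eqref{rel4first} already vanish on all of $\H_{4,g}$ (by the construction of $\B_{4,g}$ and by Lemma \ref{CErels3}), giving the two-sided identification.

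The main obstacle I anticipate is geometric rather than calculational: one must secure \emph{both} the surjectivity of $\phi$ \emph{and} the surjectivity of $\widetilde{\Delta}_{4,g} \to \Delta_{4,g}$ before the Trapezoid Lemma delivers \emph{equality} rather than mere containment. The former depends on the positivity built into the definition of $\B_{4,g}^\circ$, and the latter depends on Lemma \ref{4limits}, which is exactly where the special geometry of pencils of conics in each fiber of $\p\E^\vee \to \P$ is used to rule out the possibility that some component of $\Delta_{4,g}$ parametrizes bad $(E,F,\delta)$ with no singular fiber point. Once these two inputs are in hand, the remainder of the argument is a formal combination of the bundle theorems of Section \ref{pandg} with excision.
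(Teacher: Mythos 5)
Your proposal is correct and follows essentially the same route as the paper: apply the equality case of the Trapezoid Lemma to the diagram \eqref{4diagram} (using the surjectivity of the evaluation map \eqref{eval4} guaranteed by the definition of $\B_{4,g}^\circ$ and Lemma \ref{famjet}), use Lemma \ref{4limits} to see that $\widetilde{\Delta}_{4,g}$ surjects onto $\Delta_{4,g}$, take the module generators $\tau^i\zeta^jz^k$ from the tower of Grassmann/projective bundles, and invoke \cite[Equation 5.6]{part1} together with the fact that restriction to $\H_{4,g}^\circ$ can only introduce more relations. The only difference is cosmetic — you spell out the two containments of $\mathrm{im}(\widetilde{\Delta}_{4,g}) = \Delta_{4,g}$ and the two-sided identification of the relation ideal slightly more explicitly than the paper does.
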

\begin{proof}
By Lemma \ref{4limits}, $\widetilde{\Delta}_{4,g}$ surjects onto $\Delta_{4,g}$, so we may apply the Trapezoid Lemma \ref{trap}.
 Since $T_{\pp \E^\vee/\B_{4,g}^\circ}$ has rank $3$, the Grassmann bundle $G(2, T_{\pp \E^\vee/\B_{4,g}^\circ})$ is just the projectivization of $T_{\pp \E^\vee/\B_{4,g}^\circ}^\vee$; hence its Chow ring is generated as a module over $A^*(\pp \E^\vee)$ by $\tau^i$ for $0 \leq i \leq 2$. Similarly $A^*(\pp \E^\vee)$ is generated as a module over $A^*(\P)$ by $\zeta^j$ for $0 \leq j \leq 2$ and $A^*(\P)$ is generated as a module over $A^*(\B_{4,g}^\circ)$ by $z^k$ for $0 \leq k \leq 1$. Thus, the Trapezoid Lemma \ref{trap} implies that the classes in \eqref{relsgen} generate all relations among the pullbacks of classes on $\B_{4,g}^\circ$. In particular, setting $i=j=k=0$, we obtain 
 \begin{equation} \label{4delta} [\Delta_{4,g}] = \pi'_* \gamma'_*a'_* [\widetilde{\Delta}_{4,g}] = \rho^*(\pi \circ \gamma\circ a)_*(c_6(P^1_{\pp \E^\vee/\B_{4,g}^{\circ}}(\W))) = (8g + 20)a_1 - 8a_2' - b_2'.
\end{equation}
 
 To see the second claim, note that the classes in \eqref{relsgen} pullback to the classes in \eqref{rel4first}.
 By \cite[Equation 5.6]{part1}, the generators $a_1 = b_1, a_2, a_2', a_3, a_3', b_2, b_2', c_2$ of $A^*(\B_{4,g}^\circ)$ satisfy no relations in codimension less than $\frac{g+3}{4}-4$ (besides $a_1 = b_1$). Since one can only obtain more relations under restriction $A^*(\H_{4,g}) \to A^*(\H_{4,g}^\circ)$, we have found all relations among CE classes in degrees less than $\frac{g+3}{4}-4$.
\end{proof}

\subsection{Presentation of the ring and stabilization} \label{ach}
We use the code \cite{github} compute the classes in \eqref{rel4first}. Let $I$ be the ideal they generate in the $\qq$-algebra on the CE classes.
It turns out that modulo $I$, all CE classes are expressible in terms of $a_1, a_2', a_3'$. In particular,
\begin{equation} \label{cealg}
\qq[c_2, a_1, a_2, a_3, a_2', a_3', b_2', b_2]/I \cong \qq[a_1, a_2', a_3']/\langle r_1, r_2, r_3, r_4 \rangle,
\end{equation}
where
{\small
\begin{align*}
r_1 &= (2g^3 + 9g^2 + 10g)a_1^3 - (8g^2 + 24g + 8)a_1a_3' \\
r_2 &= (12g^3 + 42g^2 + 36g)a_1^2a_2' - (22g^3 + 121g^2 + 187g + 66)a_1a_3' - (24g^2 + 24g)a_2'a_3' \\
r_3 &= (432g^3 + 1512g^2 + 1296g)a_1a_2'^2 - (1450g^3 + 8001g^2 + 13115g + 5442)a_1a_3' \\
&\qquad \qquad \qquad \qquad \qquad \qquad \qquad \ \  - (1584g^3 + 5544g^2 + 3936g)a_2'a_3' \\
r_4 &= (14344g^6 + 165692g^5 + 747682g^4 + 1636869g^3 + 1719009g^2 + 677844g - 540)a_1^2a_3' \\
&- (17280g^4 + 112320g^3 + 224640g^2 + 129600g)a_2'^2a_3' + (352g^5 + 1440g^4 + 1448g^3 + 120g^2)a_3'^2.
\end{align*}}

\vspace{-.2in}
\begin{rem}
In contrast with the degree $3$ case, brute force computations show that there is no presentation of the Chow ring whose relations do not involve $g$.
\end{rem}

\begin{cor} \label{rcor}
Suppose $g\geq 2$.
\begin{enumerate}
    \item $R^1(\H_{4,g})$ is spanned by $\{a_1,a_2'\}$.
    \item $R^2(\H_{4,g})$ is spanned by $\{a_1^2,a_1a_2',a_2'^2,a_3'\}$.
    \item $R^3(\H_{4,g})$ is spanned by $\{a_1a_3',a_2'^3,a_2'a_3'\}$.
    \item $R^4(\H_{4,g})$ is spanned by $\{a_2'^4,a_3'^2\}$.
    \item For $i\geq 5$, $R^i(\H_{4,g})$ is spanned by $\{a_2'^i\}$.
\end{enumerate}
For $g > 4i + 12$, the spanning set of $R^i(\H_{4,g})$ given above is a basis.
\end{cor}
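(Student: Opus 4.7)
The plan is to work with the surjection
\[\varphi: \qq[a_1, a_2', a_3']/\langle r_1, r_2, r_3, r_4 \rangle \twoheadrightarrow R^*(\H_{4,g})\]
provided by \eqref{cealg}: the classes $a_1, a_2', a_3'$ are tautological, the CE classes generate $R^*(\H_{4,g})$ by \cite[Theorem 3.10]{part1}, and the classes in \eqref{rel4first} vanish in $R^*(\H_{4,g})$ by Lemma \ref{CErels3}, so \eqref{cealg} exactly asserts that every CE class is congruent to a polynomial in $a_1, a_2', a_3'$ modulo $\langle r_1, r_2, r_3, r_4 \rangle$. Hence the spanning assertions (1)--(5) reduce to computing a spanning set in each graded piece of the explicit $\qq$-algebra on the domain of $\varphi$.

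I would prove the spanning statements degree by degree. Codimensions $1$ and $2$ are immediate since $r_1, r_2, r_3$ live in codimension $3$ and $r_4$ in codimension $4$. In codimension $3$, the coefficients of $a_1^3, a_1^2 a_2', a_1 a_2'^2$ in $r_1, r_2, r_3$ factor as $g(2g+5)(g+2)$, $6g(2g+3)(g+2)$, and $216\,g(2g+3)(g+2)$ respectively, each nonzero for $g \geq 2$, so these three monomials can be eliminated, leaving $\{a_1 a_3', a_2' a_3', a_2'^3\}$. In each codimension $i \geq 4$, I would multiply each $r_j$ by all monomials in $a_1, a_2', a_3'$ of suitable degree to produce a system of linear relations among the degree-$i$ monomials; a rank computation (performed in the Macaulay2 code \cite{github}) then shows that the quotient collapses to exactly the claimed spanning set. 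The reductions are mechanical once one checks that the $g$-dependent pivot coefficients never vanish for $g \geq 2$.

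For the basis claim under the hypothesis $g > 4i + 12$, equivalently $i \leq \frac{g+3}{4} - 4$, I would combine $\varphi$ with the restriction $R^*(\H_{4,g}) \to A^*(\H_{4,g}^\circ)$. Proposition \ref{4image} together with \cite[Equation 5.6]{part1} says the resulting composition is an isomorphism in the relevant range of codimensions, so $\varphi$ is injective in codimension $i$, forcing the spanning set to be a basis. The main obstacle is really only bookkeeping: tracking the $g$-dependence of the relations carefully enough to see that the reductions work uniformly for $g \geq 2$, and that the resulting dimensions match the stated values. Both points are handled by the explicit factored forms of $r_1, \ldots, r_4$ together with the computer check in \cite{github}.
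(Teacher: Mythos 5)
Your overall strategy is the paper's: push everything through the surjection $\qq[a_1,a_2',a_3']/\langle r_1,\ldots,r_4\rangle \twoheadrightarrow R^*(\H_{4,g})$, do linear algebra in the explicit graded quotient (with the codimension~$3$ eliminations exactly as you describe — the pivot coefficients $g(2g+5)(g+2)$, $6g(2g+3)(g+2)$, $216g(2g+3)(g+2)$ are the right ones and are nonzero for $g\geq 2$), and then get the basis claim for $g>4i+12$ from the fact that the composition with $R^*(\H_{4,g})\to A^*(\H^{\circ}_{4,g})$ is an isomorphism in that range, so the surjection has no kernel there. All of that is sound.

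The gap is in how you handle ``for each $i\geq 4$'': the statement is a claim about \emph{every} codimension, and a degree-by-degree rank computation in Macaulay2 is an infinite procedure, so as written your argument only ever certifies finitely many degrees. You need a finite check plus a bridging argument. The paper does this by verifying the claim by computer only for $i\leq 10$, and then observing that for $i\geq 11$ every degree-$i$ monomial in $a_1,a_2',a_3'$ factors as a product of monomials of degrees between $5$ and $10$, each of which (by the finite check) is a multiple of the appropriate power of $a_2'$; hence every degree-$i$ monomial is a multiple of $a_2'^i$, which gives the spanning claim in (5) for all $i$. One also needs $a_2'^i\neq 0$ in the quotient for every $i$ (otherwise the ``spanning set'' in (5) could degenerate and, more importantly, the basis claim would need separate care): this follows by inspecting $r_1,\ldots,r_4$ and noting every term of every relation involves $a_1$ or $a_3'$, so the ideal is contained in $\langle a_1,a_3'\rangle$ and cannot contain a pure power of $a_2'$. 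Neither of these two points appears in your write-up, and without them parts (5) and the uniform basis statement are not established.
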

\begin{proof}
Our code \cite{github} verifies that the lists above are bases for $\qq[a_1, a_2', a_3']/\langle r_1, r_2, r_3, r_4 \rangle$ in degrees $i \leq 9$. In particular, for $5 \leq i \leq 10$, every monomial in $a_1, a_2', a_3'$ of degree $i$ is a multiple of $a_2'^i$.
By inspection, $a_2'^i$ is not in the ideal $\langle r_1, r_2, r_3, r_4 \rangle$
for any $i$, so $a_2'^i$ is non-zero for all $i$. 
For $i \geq 11$, every monomial of degree $i$ in
$a_1, a_2', a_3'$ is expressible as a product of monomials having degrees between $5$ and $10$. It follows that every monomial of degree $i \geq 11$ is a multiple of $a_2'^i$.

Proposition \ref{4image} states that $I$ provides all relations among the CE classes in degrees less than $\frac{g+3}{4}-4$. That is, the left-hand side of \eqref{cealg} maps to $R^*(\H_{4,g})$ isomorphically in degrees $i < \frac{g+3}{4}-4$.
Hence, a basis for the degree $i$ piece of $\qq[a_1, a_2', a_3']/\langle r_1, r_2, r_3, r_4 \rangle$ is a basis for $R^i(\H_{4,g})$ when $i < \frac{g+3}{4}-4$, equivalently when $g > 4i + 12$.
\end{proof}

\begin{proof}[Proof of Theorem \ref{main}(2)]
Consider the equation
\[\frac{\qq[a_1, a_2',
a_3']}{\langle r_1, r_2, r_3, r_4 \rangle} \rightarrow R^*(\H_{4,g}) \rightarrow R^*(\H_{4,g}^\circ) = A^*(\H_{4,g}^\circ).\]
The first map exists and is surjective by Proposition \ref{CErels3} and the presentation \eqref{cealg}. Meanwhile, Lemma \ref{4image} establishes that the composition is an isomorphism in degrees less than $\frac{g+3}{4}-4$. Therefore, the first map can have no kernel in codimension less than $\frac{g+3}{4}-4$.

Finally,
in \cite[Lemmas 5.5 and 5.8]{part1}, we showed that
$\H_{4,g}^\circ$ and $\H_{4,g}^{\nf}$ are ``good approximations of each other" in the sense that the codimension the complement of $\H^\circ_{4,g} \cap \H^{\nf}_{4,g} \subset \H^{\nf}_{4,g}$
and of $\H^\circ_{4,g} \cap \H^{\nf}_{4,g} \subset \H^\circ_{4,g}$ are both
at least $\frac{g+3}{4} - 4$. Therefore, by excision there is an isomorphism $A^i(\H_{4,g}^\circ) \cong A^i(\H_{4,g}^{\nf})$.
In particular, we have $\dim A^i(\H_{4,g}^{\nf}) = \dim A^i(\H_{4,g}^\circ) = \dim R^i(\H_{4,g})$.
The calculation of  $\dim R^i(\H_{4,g})$
follows from Corollary \ref{rcor}.
\end{proof}


\section{The Chow ring in degree $5$} \label{re5}
\subsection{Set up}
We begin by recalling the linear algebraic data associated to degree $5$ covers, as developed by Casnati \cite{C}. For more details in our context, see \cite[Section 3.3]{part1}. To a degree $5$, cover $\alpha: C \to \pp^1$, we again associate two vector bundles on $\pp^1$:
\[E_\alpha := (\alpha_*\O_C/\O_{\pp^1})^\vee = \ker(\alpha_*\omega_\alpha \to \O_{\pp^1}) \qquad \text{and} \qquad F_\alpha := \ker(\Sym^2 E_\alpha \to \alpha_*\omega_{\alpha}^{\otimes 2}).\]
If $C$ has genus $g$, then $E_\alpha$ has degree $g+4$, and rank $4$, while $F_\alpha$ has degree $2g+8$ and rank $5$.
Geometrically, the curve $C$ is embedded in $\gamma:\p( E^{\vee}_{\alpha}\otimes \det E_{\alpha})\rightarrow \p^1$, which further maps to $\p(\wedge^2 F_{\alpha})$ via an associated section
\[
\eta\in H^0(\p^1,\H om(E_{\alpha}^{\vee}\otimes \det E_{\alpha},\wedge^2 F_{\alpha})).
\]
The curve $C$ is obtained as the intersection of the image of $\p( E^{\vee}_{\alpha}\otimes \det E_{\alpha})$ with the  Grassmann bundle $G(2,F_{\alpha}) \subset \pp(\wedge^2 F_\alpha)$.

Conversely, suppose we are given a rank $4$, degree $g+4$ vector bundle $E$ and a rank $5$, degree $2g + 8$ vector bundle $F$ on $\pp^1$. We write $E' := E^\vee \otimes \det E$ and $\gamma: \pp E' \to \pp^1$. We characterize which sections $\eta$ fail to produce a smooth degree $5$, genus $g$ cover.
Let
\[\Phi : H^0(\pp^1, \H om(E^\vee \otimes \det E, \wedge^2 F) \xrightarrow{\sim} H^0( \pp E', \gamma^* \wedge^2 F \otimes \O_{\pp E'}(1)). \]

\begin{lem} \label{whats_sing}
Let $E$ and $F$ be as above, with $\H om(E', \wedge^2 F)$ globally generated.
Suppose we have a map $\eta: E' \to \wedge^2 F$.
\begin{enumerate}
    \item If $\eta$ is not injective on fibers then the subscheme $D(\Phi(\eta)) \subset \pp E'$ cut by the $4 \times 4$ Pfaffians of $\Phi(\eta)$ is not smooth of dimension $1$.
    \item  If $\eta: E' \rightarrow \wedge^2 F$ is injective on fibers, the intersection $C = \eta(\pp E') \cap G(2,F)$ fails to be a smooth, irreducible genus $g$, degree $5$ cover of $\pp^1$ if and only if there exists $p \in C$ so that $\dim T_p C \geq 2$.
\end{enumerate}

\end{lem}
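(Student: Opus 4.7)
The plan is to follow the template of Lemma \ref{3limits} and Lemma \ref{4limits}, but with an extra part (1) that handles a failure mode special to the pentagonal setting, where $\eta$ itself can drop rank.

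For part (1), suppose $\eta$ fails to be injective on the fiber over some $x \in \pp^1$, and pick a nonzero $v \in \ker(\eta_x)$. The value of $\Phi(\eta)$ at $[v] \in \pp E'$ is, up to the natural twist, $\eta_x(v) = 0 \in \wedge^2 F_x$, so the antisymmetric matrix representing $\Phi(\eta)$ vanishes at $[v]$ and all five $4 \times 4$ Pfaffians vanish there; hence $[v] \in D(\Phi(\eta))$. Each such Pfaffian is a degree-$2$ polynomial in the matrix entries, so in local coordinates near $[v]$ (in which those entries all vanish) the Pfaffians vanish to order at least $2$, and their Jacobian at $[v]$ is zero. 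Consequently $D(\Phi(\eta))$ is either singular at $[v]$ or has dimension strictly greater than $1$ at $[v]$, and in either case fails to be smooth of dimension $1$.

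For part (2), assume $\eta$ is injective on fibers, so $E' \hookrightarrow \wedge^2 F$ is a subbundle and $\Phi(\eta)$ induces a relative closed embedding $\pp E' \hookrightarrow \pp(\wedge^2 F)$ identifying $C = \eta(\pp E') \cap G(2,F)$ with the Pfaffian subscheme $D(\Phi(\eta)) \subset \pp E'$. The ``if'' direction is immediate: a point with $\dim T_p C \geq 2$ forces $C$ to be either singular at $p$ or of local dimension at least $2$, so $C$ cannot be a smooth $1$-dimensional cover.

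For the ``only if'' direction, suppose $\dim T_p C = 1$ at every $p \in C$. Then $C$ is smooth of pure dimension $1$. I would first argue that no component of $C$ is contained in a single fiber of $\gamma$: if $C_0$ were such a component contained in $\pp E'|_x$, then by properness the generically five points in nearby fibers of $C \to \pp^1$ must specialize into the fiber over $x$, and any resulting meeting of the moving isolated points with $C_0$ would produce a point of $C$ with two-dimensional tangent space, contradicting smoothness. The only remaining possibility is that $C_0$ is a connected component disjoint from the rest of $C$, which is excluded by the connectedness argument below. Once there are no such vertical components, $\alpha: C \to \pp^1$ is a finite flat cover of degree $\deg G(2,5) = 5$, so Casnati's structure theorem gives $\alpha_* \O_C \cong \O_{\pp^1} \oplus E^\vee$ and $g(C) = g$. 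The global-generation hypothesis on $\H om(E',\wedge^2 F)$ forces every summand of $E$ to have positive degree, whence $h^0(\pp^1, E^\vee) = 0$ and $h^0(C, \O_C) = 1$; this both rules out the disjoint-component scenario and shows $C$ is irreducible.

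The main obstacle is the ``no vertical component'' step: unlike in the quartic case, where the intersection of two conics in $\pp^2$ has a simple picture, here one must control how a $\pp^3 \subset \pp(\wedge^2 F_x) \cong \pp^9$ can meet $G(2,5)$ in positive-dimensional loci without being fully contained in $G(2,5)$, and then combine properness of $C \to \pp^1$ with the smoothness hypothesis and with the connectedness supplied by the cohomology of $E^\vee$.
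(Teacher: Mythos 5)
Your part (1) is correct, and in fact slightly sharper than the paper's version: since every entry of the antisymmetric matrix representing $\Phi(\eta)$ vanishes at the point $[v]$ corresponding to a kernel vector, the five Pfaffians lie in the square of the maximal ideal at $[v]$, so the Zariski tangent space of $D(\Phi(\eta))$ there is all of $T_{[v]}\pp E'$. The ``if'' direction of (2) and the connectedness computation in the finite case ($h^0(\pp^1,E^\vee)=0$, hence $h^0(C,\O_C)=1$) also match the paper.

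The gap is exactly the step you flag as the ``main obstacle,'' and it is genuine for two reasons. First, when a one-dimensional component $C_0$ lies in the fiber over $x$, the five points of a nearby fiber do specialize into the fiber over $x$, but nothing in your argument prevents all five from landing on the finite residual scheme $\Gamma$ of that fiber rather than on $C_0$; in that case no point of $C_0$ acquires a two-dimensional tangent space. Second, your fallback --- that $C_0$ being a connected component disjoint from the horizontal part is ``excluded by the connectedness argument below'' --- is circular: the identification $\alpha_*\O_C\cong\O_{\pp^1}\oplus E^\vee$ underlying that computation presupposes that $C$ has the right codimension in every fiber, i.e.\ precisely that there is no vertical component. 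The paper closes the loop with a Vogel-type degree count: the fiber of $C$ over $x$ is the intersection of $G(2,F|_x)\subset\pp^9$ with the six hyperplanes cutting out $\eta(\pp E'|_x)$; by nondegeneracy of the Pl\"ucker embedding one may order them so that $H_1\cap\cdots\cap H_5\cap G(2,F|_x)$ is a curve of pure dimension one and degree $5=\deg G(2,5)$, whence $C_0\subset H_6$ and the residual curve has degree at most $4$, so $\Gamma$ has length at most $4<5$. Therefore at least one of the five moving points must specialize onto $C_0$, producing the required singular point. Some argument of this kind is needed to complete your proof.
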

\begin{proof}
(1) Suppose $\eta(e_1) = 0$ for $e_1$ a vector in the fiber of $E'$ over $0 \in \pp^1$, where $\pp^1$ has coordinate $t$. We can choose coordinates $X_1, X_2, X_3, X_4$ on $\pp E'$ so that $\mathrm{span}(e_1) \in \pp E'|_0 \subset \pp E'$ is defined by vanishing of $t$ and $X_2, X_3, X_4$. Since $\eta(e_1)$ vanishes at $t = 0$, all entries of a matrix representative $M_\eta$ for $\Phi(\eta)$ as in \cite[Equation 5.13]{part1} would have coefficient of $X_1$ divisible by $t$. In particular, the quadrics $Q_i$ that define the Pfaffian locus $C = D(\Phi(\eta))$ of $\eta$ lie in the ideal $(t) + (X_2, X_3, X_4)^2$. Hence, $T_p C$ contains the entire vertical tangent space of $\pp E' \to \pp^1$, and therefore has dimension at least $3$.

(2) If $\eta(\pp E') \cap G(2, F) \subset \pp(\wedge^2 F)$ is connected, or has a component of dimension $\geq 2$, then we are done, so we suppose $\dim C = 1$. The general fiber of $C$ over $\pp^1$ consists of $5$ points. The global generation of $\H om(E', \wedge^2 F)$ implies all summands of $E$ have positive degree, so $h^0(\pp^1, E^\vee) = 0$. Hence, if $C$ has the right codimension in each fiber, then $h^0(C, \O_C) = h^0(\pp^1, E^\vee) + 1 = 1$ so $C$ is connected.

Now suppose that $C$ has a component $C_0$ that is contained in a fiber. We claim $C$ is connected (and thus has a two dimension tangent space at some point on $C_0$). Suppose that the fiber over $x \in \pp^1$ is the union of a one dimensional component $C_0$ together with a finite scheme $\Gamma$. The image $\eta(\pp E'|_x)$ is the intersection of 
six hyperplanes $H_i$ in the fiber $\pp(\wedge^2 \F)|_x \cong \pp^9$.
Thus the fiber of $C$ over $x$ is the intersection of six hyperplanes $H_i$ and the Grassmannian $G(2,F|_x)$ in its Pl\"ucker embedding. Because the Pl\"ucker embedding is nondegenerate, we can arrange it so that $H_1 \cap \cdots \cap H_5 \cap G(2, F|_x)$ has pure dimension $1$, i.e. 
the excess dimension appears only after intersecting with $H_6$; see \cite[Section 13.3.6]{EH} for a similar argument due to Vogel. 

To obtain the excess component $C_0$ in the final intersection, we must have that
\[H_1 \cap \cdots \cap H_5 \cap G(2, F|_x) = C_0\cup \Phi\]
with $C_0 \subset H_6$.
Note that the reducible curve $C_0\cup \Phi$ must have degree $5 = \deg G(2, F|_x)$, so each component has degree at most $4$. Therefore, the finite scheme $\Gamma = \Phi \cap H_6$ has degree at most $4$. Because the general fiber of $C$ over $\pp^1$ consists of a degree five zero dimensional subscheme, it follows that some of the five points in the general fiber must specialize into $C_0$, and the intersection $\eta(\pp E') \cap G(2,F)$ is singular there.
\end{proof}

The association of $\alpha:C\rightarrow \p^1$ with the pair $(E_{\alpha}, F_{\alpha})$ gives rise to a map $\H_{5,g}\rightarrow \B_{5,g}$, where $\B_{5,g}$ is the moduli stack of pairs of vector bundles on $\p^1$-bundles, as defined in \cite[Definition 5.10]{part1}. Let $\pi: \P \to \B_{5,g}$ be the universal $\pp^1$-bundle and let $\E$ be the universal rank $4$ vector bundle on $\P$.
Continuing the notation of \cite{part1}, let $z = c_1(\O_{\P}(1))$ and define 
classes $a_i, b_i \in A^i(\B_{5,g})$ and $a_i', b_i' \in A^{i-1}(\B_{5,g})$ by the formula
\[c_i(\E) = \pi^*a_i + \pi^*a_i'z \qquad \text{and} \qquad c_i(\F) = \pi^*b_i + \pi^* b_i' z.\]
(Note that there is a ``determinant compatibility condition" which implies $2a_1 = b_1$, see \cite[p. 25]{part1}.)
We also define $c_2 = -\pi_*(z^3) \in A^2(\B_{5,g})$, which is the pullback of the universal second Chern class on $\BSL_2$. 

By \cite[Equation 5.10]{part1}
\begin{gather} \label{b5fact}
\text{$a_1, a_2, a_3, a_4, a_2', a_3', a_4', b_2, b_3, b_4, b_5, b_2', b_3', b_4', b_5', c_2$ generate $A^*(\B_{5,g})$ and satisfy} \\
\text{no relations in degrees up to $g + 4$}. \notag
\end{gather}

We call the pullbacks of $\E$ and $\F$ to $\H_{5,g}$ the \emph{CE bundles}, just like in the degree $4$ case. Similarly, the pullbacks of the classes appearing in \ref{b5fact} to $\H_{5,g}$ are called the \emph{CE classes}. By \cite[Theorem 3.10]{part1}, the CE classes are tautological and generate the tautological ring.

In order to prove Theorem \ref{main}(3), we proceed in two steps, just like we did in degree $4$. First, in Section \ref{construction}, we construct a certain bundle of principal parts, and it to find relations among the CE classes. In Section \ref{allrel}, we define an open substack $\H^{\circ}_{5,g}\subset \H_{5,g}$, which is an open substack of a vector bundle over $\B^\circ_{5,g}\subset \B_{5,g}$, and use it to demonstrate that we have found all relations in degrees up to roughly $g/5$. Just like in degree $4$, the method is summarized by Figure \ref{sumfig}, but this time there are no factoring covers, so one can ignore the top row.

\subsection{The construction of the bundle of principal parts and relations}\label{construction}
In this section, we will perform a construction that starts with the data $(\P\rightarrow B, \E, \F, \eta)$ associated to degree $5$ covers
and produces a vector bundle called $RQ^1_{\p\E'/B}(\W')$ whose sections help us detect when the associated subscheme $D(\Phi(\eta)) \subset \pp \E'$ defined by the vanishing of Pfaffians fails to be smooth of relative dimension $1$ over $B$. The formation of this bundle commutes with base change. We will use this construction to produce relations among CE classes in the Chow ring of $\H_{5,g}$. 

Suppose we are given the data $(\P\rightarrow B, \E, \F, \eta)$ where $\P\rightarrow B$ is a $\p^1$-bundle, $\E$ is a rank $4$ vector bundle on $\P$, $\F$ is a rank $5$ vector bundle on $\P$, and $\eta\in H^0(\P,\H om(\E^{\vee}\otimes \det \E,\wedge^2\F))$. Set $\E'=\E^{\vee}\otimes \det \E$. Furthermore, we will assume that $\eta:\E'\rightarrow \wedge^2\F$ is injective with locally free cokernel. It thus induces an inclusion $\p\eta:\p\E'\rightarrow \p(\wedge^2 \F)$.

To set up this construction, let
$\mathcal{Y} := G(2, \F) \times_{\P} \pp \E'$ and let
$p_1: \mathcal{Y} \to G(2, \F)$ and $p_2: \mathcal{Y} \to \pp \E'$ be the projection maps, so we have the diagram below.
\begin{center}
    \begin{tikzcd}
\X \arrow[dd, bend right = 90, looseness=2, swap, "p_2"] \arrow[r, "p_1"] \arrow[d, hook] & {G(2,\F)} \arrow[d, hook, "i"] \\
{\small \pp \E' \times_{\P} \pp(\wedge^2 \F)} \arrow{d}[swap]{q_2} \arrow{r}{q_1} & \p(\wedge^2 \F) \arrow{d}{\epsilon}\\
\p\E' \arrow[r, "\gamma"']           & \P \arrow[d, "\pi"]                               &                 \\
                                     & {B}                                &                
\end{tikzcd}
\end{center}
These spaces come equipped with tautological sequences, which we label as follows. On $G(2, \F)$, we have an exact sequence
\[
0\rightarrow \mathcal{T}\rightarrow i^*\epsilon^*\F\rightarrow \mathcal{R}\rightarrow 0,
\]
where $\T$ is rank $2$ and $\mathcal{R}$ is rank $3$.
Meanwhile, on $\p(\wedge^2 \F)$, we have an exact sequence
\begin{equation} \label{pftaut}
0 \rightarrow \O_{\pp(\wedge^2 \F)}(-1) \rightarrow \epsilon^* (\wedge^2 \F) \rightarrow \U_9 \rightarrow 0
\end{equation}
where $\U_9$ is the tautological rank $9$ quotient bundle. Noting that the Pl\"ucker embedding satisfies $i^*\O_{\p(\wedge^2 \F)}(-1)=\wedge^2 \mathcal{T}$, the restriction of \eqref{pftaut} to $G(2, \F)$ takes the form
\begin{equation} \label{u9}
0\rightarrow \wedge^2 \mathcal{T}\rightarrow i^*\epsilon^* (\wedge^2 \F)\rightarrow i^*\mathcal{U}_9\rightarrow 0.
\end{equation}
It follows that the map $i^*\epsilon^*(\wedge^2 \F) \rightarrow \wedge^2 \mathcal{R}$ descends to a map 
\begin{equation} \label{uquo}
i^*\U_9  \to \wedge^2 \mathcal{R}.
\end{equation}
\begin{rem} \label{TtoN}
The tensor product of \eqref{uquo} with $i^*\O_{\pp(\wedge^2 \F)}(1)$ is the natural map from the restriction of the tangent bundle to the normal bundle, $i^* T_{\pp(\wedge^2 \F)} \to N_{G(2, \F)/\pp(\wedge^2 \F)}$.
\end{rem}

We define
\[\W := \H om(\O_{\p\E'}(-1),\gamma^*(\wedge^2\F)) = \O_{\pp \E^\vee}(1) \otimes \gamma^*(\wedge^2 \F) \otimes \det \E,\]
which is a rank $10$ vector bundle on $\pp \E'$. The composition
\[\O_{\pp \E'}(-1) \to \gamma^* \E' \xrightarrow{\gamma^* \eta} \gamma^*(\wedge^2 \F)\]
defines a section $\delta$ of $\W$.
Pulling back to
$\pp \E' \times_{\P} \pp(\wedge^2 \F)$,
consider the further composition
\begin{equation} \label{graph}
q_2^*\O_{\pp \E'}(-1) \to q_2^*\gamma^* \E' \xrightarrow{q_1^*\epsilon^* \eta} q_1^*\epsilon^*(\wedge^2 \F) \rightarrow q_1^*\U_9.
\end{equation}
The vanishing locus of this composition is precisely the graph of $\pp \eta$ inside $\pp \E' \times_{\P} \pp(\wedge^2 \F)$. Restricting \eqref{graph} to $\mathcal{Y}$, we obtain a section, which we call $\overline{\delta}$, of the rank $9$ vector bundle
\[\W' := \H om(p_2^*\O_{\pp \E'}(-1), p_1^*i^*\U_9).\]
The vanishing $V(\overline{\delta}) \subset \mathcal{Y}$ is the intersection of the graph of $\pp \eta$ with $\mathcal{Y}$ and is therefore identified with the intersection $G(2, \F) \cap \pp\eta (\pp \E')$. Viewed inside $\pp \E'$, this intersection is equal to the desired associated subscheme $D(\Phi(\eta)) \subset \pp \E'$. 

\begin{rem}
The subscheme $D(\Phi(\eta)) \subseteq \pp \E'$ is not in general the zero locus of a section of a vector bundle. However, we have found how to realize this scheme as
the zero locus of a section of a vector bundle on $\mathcal{Y}$, basically by using the fact that the graph of $\pp \eta$ is defined by the zero locus of a section of a vector bundle.
\end{rem}

Next, we are going to construct a certain restricted principal parts bundle from $\W'$ that will detect when fibers of $\C = V(\overline{\delta}) \to B$ have vertical tangent space of dimension $2$ or more. 
Before giving the construction, let us describe the geometric picture on a single fiber $\pp^1$ of $\P \to B$.
Let $E$ and $F$ be vector bundles on $\pp^1$ of ranks $4$ and $5$ respectively and suppose $\eta: E' \to \wedge^2 F$ is an injection of vector bundles with locally free cokernel. 
Let $p \in \pp E'$. The intersection $G(2, F) \cap \eta (\pp E')$ has a two dimensional tangent space at $\eta(p) \in G(2, F)$ if and only if there exists a two dimensional subspace $S \subset T_p\pp E'$ such that the differential of the projectivization of $\eta$ sends $S$ into $T_q G(2, F) \subset T_q \pp (\wedge^2 F)$.
Equivalently, the composition $S \subset T_p\pp E' \xrightarrow{\mathrm{d} \pp \eta} T_{\eta(p)} \pp(\wedge^2 F) \to N_{G(2, F)/\pp(\wedge^2 F)}|_{\eta(p)}$ is zero (see Figure \ref{detafig}).

\begin{figure}[h!]
\includegraphics[width=3.5in]{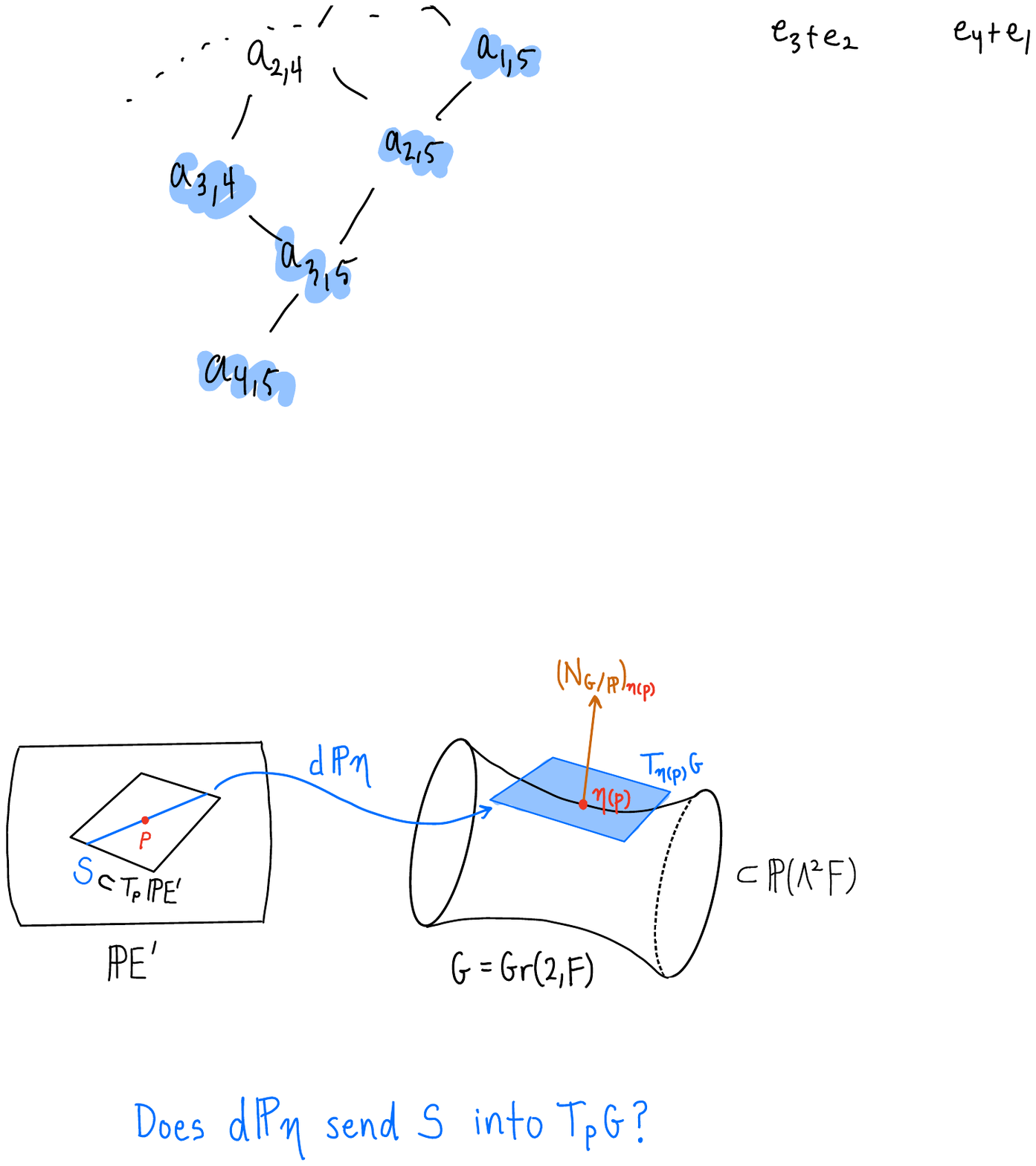}
\caption{Does $\mathrm{d} \pp \eta$ send $S$ into $T_{\eta(p)}G$?}
\label{detafig}
\end{figure}

First consider
$Q^1_{\pp \E'/B}(\W')$ (see Definition \ref{Qdef}), which comes equipped with a filtration
\begin{equation} \label{QW}
0 \rightarrow p_2^* \Omega_{\pp \E'/B} \otimes \W' \to Q^1_{\pp \E'/B}(\W') \rightarrow \W' \rightarrow 0.
\end{equation}
Given any section $\delta$ of $\W$, there is an induced section of $Q^1_{\pp \E'/B}(\W')$, which records the values and first order changes of the induced section $\overline{\delta}$ of $\W'$ as we move across $\pp \E'$.
Now let $\widetilde{\X} := G(2,p_2^*T_{\p\E'/B})\xrightarrow{a}\X$, which comes equipped with a tautological sequence
\[
0\rightarrow \Omega_x^\vee \rightarrow a^*p_2^*T_{\p\E'/B}\rightarrow \Omega_y^\vee \rightarrow 0,
\]
where $\Omega_x$ and $\Omega_y$ are both rank $2$.
Dualizing the left map gives
\begin{equation} \label{q1}
a^* p_2^* \Omega_{\pp \E'/B} \to \Omega_x.
\end{equation}
Meanwhile, tensoring the $p_1^*$ of \eqref{uquo} with $p_2^* \O_{\pp \E'}(1)$, we have a quotient 
\begin{equation} \label{wpq}
\W' \to p_2^* \O_{\pp \E'}(1) \otimes p_1^*(\wedge^2 \mathcal{R}).
\end{equation}
\begin{rem} \label{w2r}
If one has an injection $\eta: \E' \to \wedge^2 \F$, then one has an isomorphism of
 $p_2^*\O_{\pp \E'}(1)$ with $p_1^*i^*\O_{\pp(\wedge^2 \F)}(1)$ on $V(\overline{\delta})$ (coming from \eqref{graph}).
 By Remark \ref{TtoN}, the restriction of \eqref{wpq} to $V(\overline{\delta})$ then agrees with the restriction of
 $p_1^*i^* T_{\pp(\wedge^2 \F)} \to p_1^*N_{G(2, \F)/\pp(\wedge^2 \F)}$ to $V(\overline{\delta})$. This was the geometric intuition behind the definition we are about to make.
\end{rem}

Pulling back \eqref{wpq} to $\widetilde{\X}$ and tensoring with \eqref{q1}, we obtain a quotient
\begin{equation} \label{q2} a^*(p_2^*\Omega_{\pp \E'/B} \otimes \W') \to \Omega_{x}\otimes a^* (p_2^*\O_{\pp \E'}(1) \otimes p_1^*(\wedge^2 \mathcal{R})).
\end{equation}
Note that the term on the left of \eqref{q2} is the $a^*$ of the term on the left of \eqref{QW} (the ``derivatives part" of the principal parts bundle).
Let $RQ^1_{\pp\E'/B}(\W')$ be the quotient of $a^* Q^1_{\pp \E'/B}(\W')$ by the kernel of \eqref{q2}. This bundle comes equipped with a filtration
\begin{equation} \label{rqfilt}0 \rightarrow \Omega_x \otimes a^* (p_2^*\O_{\pp \E'}(1) \otimes p_1^* (\wedge^2 \mathcal{R})) \to RQ^1_{\pp \E'/B}(\W') \to \W' \rightarrow 0
\end{equation}
and has rank $15$. The bundle $RQ^1_{\pp \E'/B}(\W')$ remembers derivatives just in the ``$x$-directions" (i.e. along a distinguished $2$-plane) and remembers their values under the quotient \eqref{wpq}. Considering Remark \ref{w2r} and Figure \ref{detafig}, this is telling us to what extent vectors in the subspace $S$ corresponding to ``$x$-directions" leave $T_{\eta(p)}G(2, F)$. 
This will be spelled out in local coordinates in the lemma below.

The global section $\overline{\delta}$ of $\W'$ induces a global section $\overline{\delta}'$ of $Q^1_{\pp \E'/B}(\W')$, which in turn gives rise to a global section $\overline{\delta}''$ of $RQ^1_{\pp \E'/B}(\W')$. The following lemma describes the geometric condition for such an induced section to vanish at a geometric point of $\widetilde{\X}$.

\begin{lem} \label{geo}
Let
 $E$ and $F$ be vector bundles on $\pp^1$ of ranks $4$ and $5$ respectively. 
Let $Y = \pp E' \times_{\pp^1} G(2, F)$ and let $W$, $W'$, $R$, $Q^1_{\pp E'}(W')$ and $RQ^1_{\pp E'}(W')$ be defined analogously to the constructions above (working over a point instead of $B$). 
Suppose $\eta: E' \to \wedge^2 F$ is an injection of vector bundles.
Then the following are true:
\begin{enumerate}
    \item  The induced section $\overline{\delta}$ of $W'$ corresponding to $\eta$ vanishes at $(p, q) \in Y$ if and only if the projectivization of $\eta$ sends $p$ to $q$.
    \item The induced section $\overline{\delta}''$ of $RQ^1_{\pp E'}(W')$ corresponding to $\eta$ vanishes at $(p, q, S) \in \widetilde{Y}$ if and only if the differential of the projectivization of $\eta$ sends the subspace $S \subset T_p \pp E'$ into the subspace $T_q G(2, F) \subset T_q \pp(\wedge^2 F)$.
\end{enumerate} 
Hence, given any family $(\P \to B, \E, \F, \eta)$, the image of the vanishing of the induced section $\overline{\delta}''$ of $RQ^1_{\pp \E'}(\W')$ is the locus in $B$ over which fibers of $D(\Phi(\eta)) \to B$ fail to be smooth of relative dimension $1$.
\end{lem}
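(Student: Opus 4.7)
\textbf{Proof proposal for Lemma \ref{geo}.}

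The plan is to verify (1) by unwinding the definition of $\overline{\delta}$ fiber by fiber, then obtain (2) as a refinement using the filtration on $RQ^1_{\pp E'}(W')$ and the identification in Remark \ref{w2r}, and finally deduce (3) by combining with Lemma \ref{whats_sing}. Throughout, we may work on a single fiber over $B$ because the principal parts and the section $\overline{\delta}$ are formed compatibly with base change.

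For part (1), I would unwind the composition \eqref{graph} pointwise. A geometric point $(p,q) \in \X$ corresponds to a line $L_p \subset E'$ and a two-plane $T_q \subset F$ (both lying over the same point of $\pp^1$). Under the restriction of \eqref{pftaut} to $G(2,F)$ given by \eqref{u9}, the fiber of $i^*\U_9$ at $q$ is $(\wedge^2 F)/(\wedge^2 T_q)$. The section $\overline{\delta}$ is the composition $L_p \hookrightarrow E' \xrightarrow{\eta} \wedge^2 F \twoheadrightarrow (\wedge^2 F)/(\wedge^2 T_q)$, which vanishes exactly when $\eta(L_p) \subset \wedge^2 T_q$, i.e.\ when $\pp\eta(p) = q$.

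For part (2), the filtration \eqref{rqfilt} shows that $\overline{\delta}''$ vanishes at $(p,q,S)$ if and only if its image $\overline{\delta}$ in $W'$ vanishes (so by (1), $\pp\eta(p) = q$) and the ``derivative part'' lying in $\Omega_x \otimes a^*(p_2^*\O_{\pp E'}(1) \otimes p_1^*(\wedge^2 R))$ vanishes. At a point where $\overline{\delta}$ vanishes, the derivative part of the section of $Q^1_{\pp E'/B}(W')$ induced by $\overline{\delta}$ coincides, by Lemma \ref{Qequip}, with the linearization of the composition \eqref{graph} thought of as a map $T_p \pp E' \to W'|_{(p,q)}$; this linearization is precisely $(\mathrm{d}\pp\eta)_p$ followed by the projection onto the fiber of $W'$ (using the identification of $p_2^*\O_{\pp E'}(1)$ with $p_1^*i^*\O_{\pp(\wedge^2 F)}(1)$ along $V(\overline{\delta})$ from Remark \ref{w2r}). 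Passing from $Q^1$ to $RQ^1$ amounts to (i) restricting this linear map to the distinguished subspace $S \subset T_p\pp E'$ dual to $\Omega_x$ and (ii) composing with the quotient \eqref{wpq}, which by Remark \ref{TtoN} and Remark \ref{w2r} is the projection $T_q \pp(\wedge^2 F) \to N_{G(2,F)/\pp(\wedge^2 F)}|_q$ up to twisting by $\O(1)$. Hence $\overline{\delta}''$ vanishes at $(p,q,S)$ if and only if $\pp\eta(p) = q$ and $\mathrm{d}\pp\eta(S) \subset T_q G(2,F)$.

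For part (3), restrict to a fiber over $b \in B$. If $\eta_b: E' \to \wedge^2 F$ is injective on fibers, then $D(\Phi(\eta_b)) = \pp\eta(\pp E') \cap G(2,F)$ by construction. By Lemma \ref{whats_sing}(2), the fiber fails to be smooth of relative dimension $1$ iff there exists $p \in D(\Phi(\eta_b))$ with $\dim T_p D(\Phi(\eta_b)) \geq 2$; by (2), this occurs iff some $(p,q,S) \in \widetilde{\X}$ lies in the vanishing of $\overline{\delta}''$, with $q = \pp\eta(p)$ and $S$ a two-plane sent into $T_q G(2,F)$ by $\mathrm{d}\pp\eta$. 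If instead $\eta_b$ fails to be injective at some $p \in \pp E'$, then Lemma \ref{whats_sing}(1) says $D(\Phi(\eta_b))$ is not smooth of dimension $1$; but in that case $\mathrm{d}\pp\eta_p$ is not even defined as a map to $\pp(\wedge^2 F)$ along a direction in the kernel of $\eta$, and an argument by specialization (taking the limit of $(p',\pp\eta(p'), S')$ as $p'\to p$ inside the locus where $\eta$ drops rank) produces a point of $\widetilde{\X}$ in the vanishing locus of $\overline{\delta}''$, matching the condition on the right.

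The main obstacle I expect is being fully careful in paragraph two: the identification of the derivative part of the section of $Q^1_{\pp E'/B}(W')$ with the linearization of $\pp\eta$, combined with the precise agreement (up to the twist by $\O(1)$) between the quotient \eqref{wpq} and the projection $T_{\pp(\wedge^2 F)} \to N_{G(2,F)/\pp(\wedge^2 F)}$ on the vanishing locus, must be checked carefully so that the two filtration pieces in \eqref{rqfilt} really encode the conditions $\pp\eta(p) = q$ and $\mathrm{d}\pp\eta(S) \subset T_q G(2,F)$. Once this is verified, the rest is bookkeeping.
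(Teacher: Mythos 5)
Your overall architecture matches the paper's: part (1) by unwinding the tautological sequences pointwise, part (2) by splitting $\overline{\delta}''$ into the zeroth-order piece (giving $\pp\eta(p)=q$ via (1)) and the derivative piece in $\Omega_x \otimes a^*(p_2^*\O_{\pp E'}(1)\otimes p_1^*(\wedge^2 R))$, and part (3) from part (2), Lemma \ref{whats_sing}, and compatibility with base change. The difference is in how part (2)'s derivative piece is identified with ``$\mathrm{d}\pp\eta$ followed by projection to the normal bundle of $G(2,F)$.'' You argue this abstractly from Remarks \ref{TtoN} and \ref{w2r} and the filtration \eqref{rqfilt}, and you explicitly defer the verification as ``the main obstacle.'' The paper does not have a slicker abstract argument here: its proof of (2) \emph{is} that verification, carried out in local coordinates --- choosing bases $e_1,\dots,e_4$ and $f_1,\dots,f_5$, writing $\eta_{k,ij}$, computing the first-order deformation of $\mathrm{span}(e_1)$, exhibiting the $9\times 4$ matrix of $\mathrm{d}\pp\eta$, and checking that the bottom three rows (the $\wedge^2 R$-components, i.e.\ the quotient \eqref{wpq}) are exactly the coefficients recorded by the derivative piece of $RQ^1$, including the horizontal $d/dt$ column. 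So your proposal is a correct plan, but the step you flag as needing care is precisely the substance of the paper's proof rather than routine bookkeeping; as written, the proposal asserts the key identification rather than establishing it. One point that the coordinate computation makes transparent and that a purely formal argument must not gloss over: the identification in Remark \ref{w2r} of $p_2^*\O_{\pp E'}(1)$ with $p_1^*i^*\O_{\pp(\wedge^2 F)}(1)$ holds only along $V(\overline{\delta})$, and the normalization by $\eta_{1,12}|_{t=0}\neq 0$ (i.e.\ injectivity of $\eta$ on fibers) is what makes $\mathrm{d}\pp\eta$ well defined and matches the two twists.

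Two smaller remarks. First, your treatment of the non-injective case in part (3) is unnecessary: the lemma hypothesizes that $\eta$ is an injection of vector bundles (in the construction this means injective with locally free cokernel, hence injective on every fiber), and in the application the non-injective locus is excised separately via $\widetilde{\aW}^{\mathrm{ni}}$ in Proposition \ref{niimage}. If you did want to cover it, the specialization argument is more roundabout than needed: if $\eta(L_p)=0$ then $\overline{\delta}$ already vanishes on all of $\{p\}\times G(2,F|_x)$, so the vanishing locus is certainly nonempty there. Second, the paper's proof of the family statement is exactly your one-line base-change observation, so that part is fine as is.
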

\begin{proof}
(1) Let $t$ be a coordinate on $\pp^1$, and let $p \in \pp E'$ and $q \in G(2, F)$ be points in the fiber over $0 \in \pp^1$.
To say $\eta$ sends $p$ to $q$ is to say that $\eta$ sends the subspace of $E'|_0$ corresponding to $p$ into the subspace of $\wedge^2 F|_0$ corresponding to $q$. 
Hence, by the definition of the tautological sequences, $\eta$ sends $p$ to $q$ if and only if the composition \[p_2^* \O_{\pp E'}(-1) \to p_2^* \gamma^* E' \to p_1^*i^*\epsilon^*(\wedge^2 F) \to p_1^*i^*U_9\]
vanishes at $(p, q)$, which is to say $\overline{\delta}$ vanishes.

(2) Trivializing $E$ and $F$ over an open $0 \in U \subset \pp^1$, we may choose a basis $e_1, \ldots, e_4$ for $E$ so that $p = \mathrm{span}(e_1)$ and a basis $f_1, \ldots, f_5$ for $F$ so that $q = \mathrm{span}(f_1 \wedge f_2)$.
 Let $\eta_{k, ij}$ be the coefficient of $f_i \wedge f_j$ in $\eta(e_k)$, so $\eta_{k, ij}$ is a polynomial in $t$.
In these local coordinates, to say $\eta$ sends $p$ to $q$ is to say that $\eta_{1, ij}|_{t=0} = 0$ for $ij \neq 12$.

The map $p_1^*\wedge^2 F \rightarrow \wedge^2 R$ corresponds to projection onto the span of $f_3 \wedge f_4, f_3 \wedge f_5,$ and $f_4 \wedge f_5$.
If $\eta$ sends $p$ to $q$, then the induced section $\overline{\delta}$ of $W'$ already vanishes. Therefore, the value of $\overline{\delta}''$ at $(p, q)$
lands in the subbundle 
$p_2^*\Omega_{\pp E'/B} \otimes p_2^* \O_{\pp E'}(1) \otimes p_1^*(\wedge^2 R) \subset Q^1_{\pp E'}(W')$. This ``value" of $\overline{\delta}''$ at $(p, q)$ records the first order information of $\eta_{1, ij}$ for $ij = 34, 35, 45$ as $p$ deforms.

First order deformations of $p$ are of the form $\mathrm{span}(e_1) \mapsto \mathrm{span}(e_1 + \epsilon(ae_2 + be_3 + c e_4))|_{t = \epsilon d}$, where $\epsilon^2 = 0$. Here, $a, b, c, d$ are coordinates on the tangent space at $p$ ($a, b, c$ are vertical coordinates and $d$ is the horizontal coordinate).
The coefficient of $f_i \wedge f_j$ in $\eta(e_1 + \epsilon(ae_2 + be_3 + ce_4))|_{t = \epsilon d}$ is
\begin{equation} \label{abg}
\eta_{1,ij} + \left(d \left. \left(\frac{d}{dt}\eta_{1,ij}\right)\right|_{t=0}  + a \eta_{2,ij}|_{t=0} + b \eta_{3,ij}|_{t=0} + c\eta_{4,ij}|_{t=0}\right)\epsilon \qquad \text{for $ij = 34, 35, 45$}.
\end{equation}
Locally, $a \epsilon, b\epsilon, c\epsilon, d\epsilon$ are our basis for $\Omega_{\pp E}$ and $f_i \wedge f_j$ for $ij = 34, 35, 45$ is our basis for $\wedge^2 R$. The ``value" we wish to extract in the fiber of $p_2^*\Omega_{\pp E'/B} \otimes p_2^* \O_{\pp E'}(1) \otimes p_1^*(\wedge^2 R)$ over $(p, q)$ is the coefficients of $a \epsilon, b \epsilon, c\epsilon,$ and $d\epsilon$ in \eqref{abg} for $ij = 34, 35, 45$.

Now suppose $\eta$ is injective on fibers, so $\pp \eta$ is well-defined. In particular, $\eta_{1,12}|_{t=0} \neq 0$. With respect to $a,b,c,d$ the differential of $\pp \eta$, from $T_p \pp E' \rightarrow T_q \pp (\wedge^2 F)$, is represented by a $9 \times 4$ matrix
\begin{equation} \label{diff}
   \frac{1}{\eta_{1,12}|_{t=0}} \left(\begin{matrix}
   \frac{d}{dt} \eta_{1,13}|_{t=0} &
    \eta_{2,13}|_{t=0} & \eta_{3,13}|_{t=0} & \eta_{4,13}|_{t=0} \\
    \frac{d}{dt} \eta_{1,14}|_{t=0} &
    \eta_{2,14}|_{t=0} & \eta_{3,14}|_{t=0} & \eta_{4,14}|_{t=0} \\
    \vdots & & & \vdots \\
    \frac{d}{dt} \eta_{1,45}|_{t=0} &
    \eta_{2,45}|_{t=0} & \eta_{3,45}|_{t=0} & \eta_{4,45}|_{t=0}
    \end{matrix}\right).
\end{equation}
The subspace $T_q G(2, F) \subset T_q\pp(\wedge^2 F)$ corresponds to the first $6$ coordinates. (A first order deformation of $f_1 \wedge f_2$ remains a pure wedge to first order if and only if the $f_i \wedge f_j$ with non-zero coefficient in the deformation have one of $i,j$ is equal to $1$ or $2$. See also Remark \ref{w2r}.)
Thus, $\pp \eta$ sends $T_p\pp E'$ into $T_qG(2, F)$ if and only if the bottom three rows of \eqref{diff} vanish, which occurs if and only if the coefficients of $a,b,c,d$ in \eqref{abg} vanish.
More generally, a tangent vector in $T_p \pp E'$
is sent into $T_q G(2, F)$ if and only if 
\eqref{abg} vanishes (for $ij = 34, 35, 45$) when the corresponding values of $a,b,c,d$ are plugged in.  Plugging in values for $a,b,c,d$ in a given two dimensional subspace $S$ of $T_p\pp E'$ then corresponds to the ``value" of $\eta$ in $S^\vee \otimes p_2^* \O_{\pp E'}(1) \otimes p_1^*(\wedge^2 R)$ over $(p, q)$. By the filtration \eqref{rqfilt}, this ``value" is zero if and only if $\overline{\delta}''$ vanishes at $(p, q, S) \in \widetilde{X}$.

Since the formation of these (refined) principal parts bundles commutes with base change, the claim regarding families follows.
\end{proof}

We now apply the above construction in the case $B=\H_{5,g}$ and $\eta = \eta^{\mathrm{univ}}$, the section associated to the universal cover $\C \to \P$. By Lemma \ref{geo} and the fact that the universal curve $\mathcal{C} = V(\overline{\delta}'')$ is smooth of relative dimension $1$ over $\H_{5,g}$, the global section $\overline{\delta}''$ of $RQ^1_{\pp \E'/\H_{5,g}}(\W')$ is nowhere vanishing. We therefore have the following lemma, which gives a source of relations among the CE classes on $\H_{5,g}$.
\begin{lem} \label{rels5}
Let $z = c_1(\O_{\P}(1)), \zeta = c_1(\O_{\pp \E'}(1)), \sigma_{i} = c_i(\mathcal{R}),$ and $s_i = c_i(\Omega_y^{\vee})$. All classes of the form (some pullbacks omitted for ease of notation):
\[a_* p_{2*} \gamma_* \pi_* (c_{15}(RQ^1_{\pp \E'/\H_{5,g}}(\W')) \cdot  s_1^{l_1} s_2^{l_2} \sigma_1^{k_1} \sigma_2^{k_2}\sigma_3^{k_3} \zeta^j z^i)\]
are zero in $R^*(\H_{5,g}) \subseteq A^*(\H_{5,g})$.
\end{lem}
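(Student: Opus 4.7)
The plan is to exploit the nowhere-vanishing section $\overline{\delta}''$ of $RQ^1_{\pp\E'/\H_{5,g}}(\W')$ to force its top Chern class to vanish, and then verify that the pushforwards in the statement land in the tautological subring.

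First I would record the vanishing of the top Chern class. The preceding paragraph already observes that, by Lemma \ref{geo} applied to $(\P \to \H_{5,g}, \E, \F, \eta^{\mathrm{univ}})$, the section $\overline{\delta}''$ vanishes precisely over the locus where $D(\Phi(\eta^{\mathrm{univ}})) = \mathcal{C}$ fails to be smooth of relative dimension $1$ over $\H_{5,g}$. Since $\mathcal{C} \to \H_{5,g}$ is a smooth family of curves by the definition of $\H_{5,g}$, this locus is empty, so $\overline{\delta}''$ is nowhere vanishing. As $RQ^1_{\pp\E'/\H_{5,g}}(\W')$ has rank $15$, this gives
\[c_{15}(RQ^1_{\pp\E'/\H_{5,g}}(\W')) = 0 \in A^{15}(\widetilde{\X}).\]

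Next I would multiply this zero class by any monomial $\alpha = s_1^{l_1} s_2^{l_2} \sigma_1^{k_1} \sigma_2^{k_2}\sigma_3^{k_3} \zeta^j z^i$ and push forward along the proper tower $\widetilde{\X} \xrightarrow{a} \X \xrightarrow{p_2} \pp \E' \xrightarrow{\gamma} \P \xrightarrow{\pi} \H_{5,g}$ (each map is a Grassmann or projective bundle, hence proper). Properness and the fact that pushforward commutes with multiplication by pulled-back classes give that the resulting class vanishes in $A^*(\H_{5,g})$.

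The substantive step is then verifying that such a pushforward actually lies in the tautological subring $R^*(\H_{5,g})$, so that it produces a relation among tautological classes rather than only among Chow classes. Here I would invoke the fact from \cite[Theorem 3.10]{part1} that $R^*(\H_{5,g})$ is generated by the CE classes, together with the observation that $RQ^1_{\pp\E'/\H_{5,g}}(\W')$ is assembled by standard functorial operations from pullbacks of $\E$, $\F$, and $\O_\P(1)$: one forms $\E' = \E^\vee \otimes \det\E$, then the projective and Grassmann bundles $\pp\E'$, $G(2,\F)$, and $G(2, p_2^*T_{\pp \E'/\H_{5,g}})$, and finally $Q^1$ and its refinement $RQ^1$ via the filtrations set up in Section \ref{dir-ref}. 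Consequently, the Chern classes of $RQ^1$ as well as the auxiliary classes $s_i, \sigma_j, \zeta, z$ all lie in the algebra generated by pullbacks of Chern classes of $\E, \F$, and $\O_\P(1)$. The projective and Grassmann bundle theorems of Section \ref{pandg}, combined with the projection formula, show that the successive pushforwards collapse to polynomial expressions in the CE classes on $\H_{5,g}$, which lie in $R^*(\H_{5,g})$. The main (though routine) obstacle is this tautological bookkeeping; no deep geometric input is needed beyond Lemma \ref{geo}, and the explicit polynomials are computed with Schubert2 in \cite{github}.
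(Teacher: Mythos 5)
Your proposal is correct and matches the paper's (implicit) argument exactly: the paper derives this lemma directly from the preceding paragraph, where Lemma \ref{geo} and the smoothness of $\C \to \H_{5,g}$ show that $\overline{\delta}''$ is nowhere vanishing, so $c_{15}(RQ^1_{\pp\E'/\H_{5,g}}(\W'))=0$ and all its products push forward to zero. Your extra remarks on why the pushforwards land in $R^*(\H_{5,g})$ (everything is built functorially from the CE bundles, so the projective/Grassmann bundle theorems and the projection formula yield polynomials in CE classes) are exactly the bookkeeping the paper leaves implicit.
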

\subsection{All relations in low codimension} \label{allrel}
We recall the construction of an open substack $\H_{5,g}^{\circ}\subset \H_{5,g}$ and what we already know about its Chow ring from \cite{part1}.
We start with $\B_{5,g}$, the moduli space of pairs of vector bundles $E$ of rank $4$, degree $g+4$ and $F$ of rank $5$, degree $g+5$ on $\pp^1$-bundles together with an isomorphism of $\det E^{\otimes 2}$ and $\det F$ (see \cite[Section 5.3]{part1}). Let $\E$ and $\F$ be the universal bundles on $\pi:\P\rightarrow \B_{5,g}$ and let $\gamma: \pp \E^\vee \to \P$ be the structure map. Define $\aV_{5,g}:=\H om(\E^{\vee}\otimes \det \E,\wedge^2 \F)$.
 We consider an open substack $\B^{\circ}_{5,g}\subset \B_{5,g}$, defined by a certain positivity condition for the bundle $\aV_{5,g}$
    \begin{equation} \label{defb5c}
    \B^{\circ}_{5,g}:=\B_{5,g}\smallsetminus \Supp R^1\pi_*(\aV_{5,g}(-2)).
    \end{equation}
    Let $\H^{\circ}_{5,g}$ denote the base change of $\H_{5,g}\rightarrow \B_{5,g}$ along the open embedding $\B_{5,g}^{\circ}\hookrightarrow \B_{5,g}$.
    
Over $\B^{\circ}_{5,g}$, we see that $\mathcal{X}_{5,g}^{\circ}:=\pi_*\aV_{5,g}|_{\B^{\circ}_{5,g}}$ is a vector bundle whose fibers correspond to sections of $\aV_{5,g}$. 
The open $\H_{5,g}^\circ$ is contained in the open $\H_{5,g}'$ of 
\cite[Lemma 5.11]{part1}, so that lemma implies $\H^{\circ}_{5,g} \to \B_{5,g}^\circ$ 
 factors through an open embedding in $\mathcal{X}^{\circ}_{5,g}$. We define
    \[
    \Delta_{5,g}:=\mathcal{X}^{\circ}_{5,g}\smallsetminus \H^{\circ}_{5,g},
    \]
represented in red in the middle column of Figure \ref{sumfig}.
Now we wish to use excision
to determine the Chow ring of $\H_{5,g}^\circ$ in degrees up to $\frac{g+4}{5} - 16$. We already understand $A^*(\mathcal{X}^\circ_{5,g}) \cong A^*(\B_{5,g}^\circ)$ in degrees up to $\frac{g+4}{5} - 16$ by \cite[Equation 5.11]{part1}.
Lemma \ref{whats_sing} says we need to remove the locus of non-injective maps and the locus of injective maps such that the induced intersection of $\p\E'$ and $G(2,\F)$ has a singular point. 


We begin by computing the relations obtained from removing the locus of non-injective maps $\E' \rightarrow \wedge^2 \F$, i.e. maps that drop rank along some point on $\P$. Consider the projective bundle $\gamma:\p \E'\rightarrow \P \to \B_{5,g}^\circ$, and let $\W := \O_{\p\E'}(1)\otimes \gamma^*(\wedge^2 \F)$. We have that $\gamma_* \W = \H om(\E', \wedge^2 \F) = \aV_{5,g}$, so by the definition of $\B_{5,g}^\circ$ (see \eqref{defb5c}) and Lemma \ref{famjet}, the map
\begin{equation} \label{wjets}\gamma^*\pi^* \aW_{5,g}^\circ = \gamma^* \pi^*\pi_* \gamma_* \W \to P^1_{\pp \E'/\B_{5,g}^\circ}(\W) \qquad \text{is surjective.}
\end{equation}
Composing with the surjection $P^1_{\pp \E'/\B_{5,g}^\circ}(\W) \to \W$, we obtain a surjection $\gamma^* \pi^* \aW_{5,g}^\circ \to \W$, whose kernel we define to be $\widetilde{\aW}^{\mathrm{ni}}$. The fiber of $\widetilde{\aW}^{\mathrm{ni}}$ at a point $p \in \pp \E'$ corresponds to maps of $\E' \to \wedge^2 \F$ (on the fiber over $\pi(\gamma(p))$) whose kernel contains the subspace referred to by $p$.

We then have the following trapezoid diagram:
\[
\begin{tikzcd}
\tilde{\aW}^{\mathrm{ni}} \arrow[r, hook] \arrow[rd] & {\gamma^*\pi^*\aW_{5,g}^\circ} \arrow[d] \arrow[r] & {\pi^*\aW_{5,g}^\circ} \arrow[d] \arrow[r] & {\aW_{5,g}^\circ} \arrow[d] \\
                                          & \p\E' \arrow[r, "\gamma"']                 & \P \arrow[r, "\pi"']               & {\B^{\circ}_{5,g}} 
\end{tikzcd}
\]
Thus, Lemma \ref{trap} yields:
\begin{prop}\label{niimage}
The image of the pushforward map $A_*(\tilde{\aW}^{\mathrm{ni}})\rightarrow A_*(\aW_{5,g})$ is equal to the ideal generated by
\begin{equation} \label{wni}
\rho^*\pi_*\gamma_*(c_{10}(\W))\cdot \zeta^jz^i), \qquad 0 \leq j \leq 3, \ 0 \leq i \leq 1.
\end{equation}
where $\zeta = c_1(\O_{\pp \E'}(1))$ and $z = c_1(\O_{\P}(1))$.
\end{prop}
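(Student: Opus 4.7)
The plan is to apply the Trapezoid Lemma (Lemma \ref{trap}) directly in the setup where $B = \B^{\circ}_{5,g}$, $\widetilde{B} = \p\E'$ with structure map $\sigma = \pi \circ \gamma$, the bundle on $B$ is $X = \aW^{\circ}_{5,g}$, and the bundle on $\widetilde{B}$ is $V = \W$. The map $\phi : \sigma^{*} X = \gamma^{*}\pi^{*}\aW^{\circ}_{5,g} \to \W$ is the composition of the surjection $\gamma^{*}\pi^{*}\pi_{*}\gamma_{*}\W \to P^{1}_{\p\E'/\B^{\circ}_{5,g}}(\W)$, which is surjective by the definition of $\B^{\circ}_{5,g}$ in \eqref{defb5c} together with Lemma \ref{famjet} (as already noted just above in \eqref{wjets}), with the canonical quotient $P^{1}_{\p\E'/\B^{\circ}_{5,g}}(\W) \to \W$. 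Hence $\phi$ is a surjection of vector bundles, and its kernel $\tilde{\aW}^{\mathrm{ni}}$ is a vector bundle over $\p\E'$.

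Next I would record that $V = \W = \O_{\p\E'}(1) \otimes \gamma^{*}(\wedge^{2}\F)$ has rank $\binom{5}{2} = 10$, since $\F$ has rank $5$. Consequently $\tilde{\aW}^{\mathrm{ni}}$ has codimension $10$ inside $\gamma^{*}\pi^{*}\aW^{\circ}_{5,g}$, which is the codimension required for the Trapezoid Lemma to apply with $c_{10}(\W)$ as the fundamental class of the vanishing locus of the tautological section of $\phi$.

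By the equality clause of Lemma \ref{trap}, which applies precisely because $\phi$ is a surjection of vector bundles, the image of $A_{*}(\tilde{\aW}^{\mathrm{ni}}) \to A_{*}(\aW^{\circ}_{5,g})$ is exactly the ideal generated by the classes $\rho^{*}\sigma_{*}(c_{10}(\W) \cdot \alpha) = \rho^{*}\pi_{*}\gamma_{*}(c_{10}(\W) \cdot \alpha)$, as $\alpha$ ranges over any set of generators for $A^{*}(\p\E')$ as a module over $A^{*}(\B^{\circ}_{5,g})$. It remains only to exhibit such a module-generating set. Applying the projective bundle theorem \eqref{pbt} twice --- first to the $\p^{1}$-bundle $\P \to \B^{\circ}_{5,g}$, which furnishes the $A^{*}(\B^{\circ}_{5,g})$-module basis $\{1, z\}$ of $A^{*}(\P)$, and then to the projective bundle $\p\E' \to \P$ associated to the rank-$4$ bundle $\E'$, which furnishes the $A^{*}(\P)$-module basis $\{1, \zeta, \zeta^{2}, \zeta^{3}\}$ of $A^{*}(\p\E')$ --- one obtains that the monomials $\zeta^{j} z^{i}$ with $0 \leq j \leq 3$ and $0 \leq i \leq 1$ generate $A^{*}(\p\E')$ as an $A^{*}(\B^{\circ}_{5,g})$-module. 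This yields precisely the ideal described in the statement.

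No step here constitutes a genuine obstacle: once the surjectivity of $\phi$ (already essentially in hand from the discussion preceding the statement) is invoked, the proposition reduces to bookkeeping with the Trapezoid Lemma and the projective bundle theorem.
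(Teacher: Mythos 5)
Your proposal is correct and is essentially the paper's own argument: the paper defines $\tilde{\aW}^{\mathrm{ni}}$ as the kernel of the surjection $\gamma^*\pi^*\aW_{5,g}^\circ \to \W$ (obtained from \eqref{wjets} composed with $P^1_{\pp\E'/\B_{5,g}^\circ}(\W)\to\W$) and then invokes the equality clause of the Trapezoid Lemma \ref{trap} with the same module generators $\zeta^j z^i$. The rank count $\operatorname{rank}\W = 10$ and the module-basis bookkeeping you supply are exactly the (unstated) details behind the paper's one-line deduction.
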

Next, we excise the locus of injective maps such that the induced intersection of $\p\E'$ and $G(2,\F)$ has a singular point.
From the construction in Section \ref{construction} applied to the case $B=\B^{\circ}_{5,g}$, we have a rank $15$ vector bundle $RQ^1_{\pp \E'/\B^{\circ}_{5,g}}(\W')$ on $\widetilde{\X}$, which comes equipped with a series of surjections (see Lemma \ref{Qequip} for the first map; the second map comes from the construction of $RQ^1_{\pp \E'/\B^{\circ}_{5,g}}(\W')$, which was made just after \eqref{q2}):
\begin{equation} \label{ptor} a^*p_1^*P^1_{\pp \E'/\B_{5,g}^\circ}(\W) \to a^* Q^1_{\pp \E'/\B_{5,g}^\circ}(\W') \to RQ^1_{\pp \E'/\B_{5,g}^\circ}(\W').
\end{equation}
Applying $a^*p_2^*$ to \eqref{wjets} and composing the result with \eqref{ptor}, we obtain a surjection
\begin{equation} \label{to15}a^*p_2^*\gamma^*\pi^* \aW_{5,g}^\circ \to RQ^1_{\pp \E'/\B_{5,g}^\circ}(\W').
\end{equation}
Define $\widetilde{\Delta}_{5,g}$ to be the kernel of \eqref{to15}, so that we obtain a trapezoid diagram:
\[
\begin{tikzcd}
\widetilde{\Delta}_{5,g} \arrow[overlay,r, "i", hook] \arrow[overlay,rd, "\rho''"'] &  {\sigma^*p_2^*\gamma^*\pi^*\aW_{5,g}^\circ} \arrow[overlay,d, "\rho'"] \arrow[r] & {p_2^*\gamma^*\pi^*\aW_{5,g}^\circ} \arrow[overlay,d] \arrow[overlay,r] & {\gamma^*\pi^*\aW_{5,g}^\circ} \arrow[overlay,d] \arrow[overlay,r] & {\pi^*\aW_{5,g}^\circ} \arrow[d] \arrow[overlay,r] & {\aW_{5,g}^\circ} \arrow[overlay,d, "\rho"] \\
                                                                                                               & \tilde{\X} \arrow[overlay,r, "a"']                                         & \X \arrow[overlay,r, "p_2"']                             & \p\E' \arrow[overlay,r, "\gamma"']                 & \P \arrow[overlay,r, "\pi"']               & {\B^{\circ}_{5,g}}         
\end{tikzcd}
\]

\begin{lem} \label{dtlem}
Let $z = c_1(\O_{\P}(1)), \zeta = c_1(\O_{\pp \E'}(1)), \sigma_{i} = c_i(\mathcal{R}),$ and $s_i = c_i(\Omega_y^{\vee})$. The image of the push forward $A_*(\widetilde{\Delta}_{5,g}) \to A_*(\aW_{5,g}^\circ)$ is the ideal generated by
\begin{equation} \label{dtilde} \rho^*a_* p_{2*} \gamma_* \pi_* (c_{15}(RQ^1_{\pp \E'/\B^{\circ}_{5,g}}(\W')) \cdot  s_1^{l_1} s_2^{l_2} \sigma_1^{k_1} \sigma_2^{k_2} \sigma_3^{k_3} \zeta^j z^i)
\end{equation}
for $0 \leq j\leq 3$,  $ 0 \leq i\leq 1$,
$0 \leq l_1, l_2 \leq 2$ with $l_1+l_2\leq 2$, and $0\leq k_1,k_2,k_3\leq 2$ with $k_1+k_2+k_3\leq 2$.
\end{lem}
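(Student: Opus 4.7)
The plan is to apply the Trapezoid Lemma \ref{trap} directly to the trapezoid diagram preceding the statement, taking $B=\B_{5,g}^\circ$, $\widetilde{B}=\widetilde{\X}$ (with proper cover $\pi\circ\gamma\circ p_2\circ a$), $X=\aW_{5,g}^\circ$, and $V=RQ^1_{\pp\E'/\B_{5,g}^\circ}(\W')$, a bundle of rank $15$. By construction $\widetilde{\Delta}_{5,g}$ is the kernel of the map $\phi:a^*p_2^*\gamma^*\pi^*\aW_{5,g}^\circ \to RQ^1_{\pp\E'/\B_{5,g}^\circ}(\W')$ of \eqref{to15}, i.e.\ the preimage of the zero section, and $\rho^*:A^*(\B_{5,g}^\circ)\to A^*(\aW_{5,g}^\circ)$ is an isomorphism. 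The Trapezoid Lemma will then identify the image of $A_*(\widetilde{\Delta}_{5,g})\to A_*(\aW_{5,g}^\circ)$ with the ideal generated by the classes $\rho^*(\pi\circ\gamma\circ p_2\circ a)_*(c_{15}(RQ^1_{\pp\E'/\B_{5,g}^\circ}(\W'))\cdot \alpha)$, where $\alpha$ ranges over $A^*(\B_{5,g}^\circ)$-module generators of $A^*(\widetilde{\X})$.

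The first hypothesis to verify is that $\phi$ is a surjection of vector bundles, so that the Trapezoid Lemma gives equality of ideals rather than mere containment. By construction $\phi$ is the pullback along $a^*p_2^*$ of the map \eqref{wjets} composed with the surjections of \eqref{ptor}. The map \eqref{wjets} is surjective by the defining vanishing $R^1\pi_*(\aV_{5,g}(-2))=0$ of $\B_{5,g}^\circ$ in \eqref{defb5c} together with Lemma \ref{famjet}; the two maps in \eqref{ptor} are surjective by the construction of $Q^1$ and $RQ^1$ in Section \ref{construction}. Since pullback preserves surjectivity, $\phi$ is surjective and $\widetilde{\Delta}_{5,g}$ is genuinely a subbundle of the expected codimension $15$.

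Next I would enumerate $A^*(\B_{5,g}^\circ)$-module generators of $A^*(\widetilde{\X})$ by pasting together the bundle structures in the tower
\[\widetilde{\X} \xrightarrow{a} \X \xrightarrow{p_2} \pp\E' \xrightarrow{\gamma} \P \xrightarrow{\pi} \B_{5,g}^\circ.\]
The map $\pi$ is a $\pp^1$-bundle contributing $z^i$ for $0\leq i\leq 1$; the map $\gamma$ is the projective bundle of the rank $4$ bundle $\E'$ contributing $\zeta^j$ for $0\leq j\leq 3$; the map $p_2$ exhibits $\X$ as the pullback to $\pp\E'$ of the Grassmann bundle $G(2,\F)\to\P$ of $2$-planes in the rank $5$ bundle $\F$, so by the rank $5$ case of Section \ref{pandg} it contributes $\sigma_1^{k_1}\sigma_2^{k_2}\sigma_3^{k_3}$ with $0\leq k_i\leq 2$ and $k_1+k_2+k_3\leq 2$; and $a$ is the Grassmann bundle of $2$-planes in $p_2^*T_{\pp\E'/\B_{5,g}^\circ}$, which has rank $4$ since $\pp\E'\to \B_{5,g}^\circ$ has relative dimension $4$, so by the rank $4$ case of Section \ref{pandg} it contributes $s_1^{l_1}s_2^{l_2}$ with $0\leq l_i\leq 2$ and $l_1+l_2\leq 2$. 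Multiplying these together yields precisely the monomials indexed in the statement. The only non-mechanical step in the argument is the everywhere-surjectivity of $\phi$, which is exactly what the positivity condition \eqref{defb5c} defining $\B_{5,g}^\circ$ was arranged to provide; given that, the Trapezoid Lemma combined with the projective and Grassmann bundle theorems of Section \ref{pandg} finishes the proof.
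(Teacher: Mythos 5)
Your proposal is correct and follows essentially the same route as the paper: the paper's proof likewise cites the Grassmann/projective bundle module generators from Section \ref{pandg} for the tower $\widetilde{\X}\to\X\to\pp\E'\to\P\to\B_{5,g}^\circ$ and then invokes the Trapezoid Lemma, with the surjectivity of \eqref{to15} having been established just before the lemma (exactly as you argue, via \eqref{defb5c}, Lemma \ref{famjet}, and the construction of $Q^1$ and $RQ^1$). Your ranks and exponent bounds all check out, so there is nothing to add.
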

\begin{proof}
The monomials $s_1^{l_1} s_2^{l_2} \sigma_1^{k_1} \sigma_2^{k_2}\sigma_3^{k_3} \zeta^j z^i$ with exponents satisfying the inequalities in the statement of the lemma generate $A^*(\widetilde{\X})$ as an $A^*(\B_{5,g}^\circ)$ module (see the last paragraph of Section \ref{pandg}). The result now follows from the Trapezoid Lemma \ref{trap}. In codimension $1$, for example, since $\widetilde{\Delta}_{5,g} \to \Delta_{5,g}$ is generically one-to-one, we see
 \begin{equation} \label{5delta} [\Delta_{5,g}] = \rho^*(\pi \circ \gamma\circ p_2\circ a)_*(c_{15}(RQ^1_{\pp \E'/\B_{5,g}^{\circ}}(\W'))) = (10g+36)a_1-7a_2'-b_2'. \qedhere
\end{equation}
\end{proof}
\begin{lem} \label{asy5}
Let $I$ be the ideal generated by the classes in \eqref{wni} and \eqref{dtilde}. Then $A^*(\H_{5,g}^\circ) = A^*(\B_{5,g}^\circ)/I$. In fact, $I$ is generated by the classes in \eqref{dtilde}, so Lemma \ref{rels5} determines all relations among $CE$ classes in codimension up to $\frac{g+4}{5}-16$.
\end{lem}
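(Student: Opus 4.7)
The plan is to prove the lemma in two stages. In stage one, I will show that $A^*(\H_{5,g}^\circ) = A^*(\B_{5,g}^\circ)/\tilde I$ where $\tilde I$ is the ideal generated by \emph{both} \eqref{wni} and \eqref{dtilde}. In stage two, I will argue that the classes in \eqref{wni} already lie in the ideal generated by \eqref{dtilde}, so in fact $\tilde I$ is generated by \eqref{dtilde} alone.

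For stage one, I would apply excision along $\H_{5,g}^\circ \hookrightarrow \aW_{5,g}^\circ$ with complement $\Delta_{5,g}$, using $A^*(\aW_{5,g}^\circ) \cong A^*(\B_{5,g}^\circ)$ because $\rho$ is a vector bundle. By Lemma \ref{whats_sing}, $\Delta_{5,g}$ decomposes set-theoretically as the non-injective locus together with the locus of injective maps whose Pfaffian scheme has a $2$-dimensional tangent space. The former is by construction the image of the proper map $\tilde\aW^{\mathrm{ni}} \to \aW_{5,g}^\circ$; the latter is contained in the image of the proper map $\widetilde\Delta_{5,g} \to \aW_{5,g}^\circ$ by Lemma \ref{geo}. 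With rational coefficients, a proper surjection onto a closed subset induces a surjection on Chow groups, so the image of $A_*(\Delta_{5,g}) \to A_*(\aW_{5,g}^\circ)$ is generated by the combined pushforwards. By Proposition \ref{niimage} these are the classes \eqref{wni}, and by Lemma \ref{dtlem} they are the classes \eqref{dtilde}.

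For stage two --- the main obstacle --- the subsumption does not follow cleanly from a scheme-theoretic containment. Given a non-injective $\eta$ with kernel vector $v \in \E'|_p$ at some $p \in \pp^1$, setting $P = [v]$ makes $\overline\delta$ vanish at $(P, q)$ for every $q \in G(2,\F)|_p$; however, the vanishing of the refined principal parts section $\overline\delta''$ at some $(P, q, S)$ additionally requires the first-order deformation map $\psi \colon T_P\pp\E' \to (\wedge^2\F)|_p$ to send a $2$-plane $S$ into $T\wedge\F|_p \subset (\wedge^2\F)|_p$ for some $T \in G(2,\F)|_p$, and this is a non-trivial linear-algebraic condition on $\eta$. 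I therefore expect the claim to be established by direct Chow-ring computation: one expresses each generator of \eqref{wni} as a polynomial in the CE classes and verifies by Gr\"obner basis methods, implemented in the Schubert2 package of Macaulay2 (cf.\ \cite{github}), that it lies in the ideal generated by the corresponding polynomial expressions for \eqref{dtilde}. This is analogous to the computer-verified relations in Sections \ref{RE3} and \ref{re4}.

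Finally, the concluding statement follows at once: by \cite[Equation 5.10]{part1}, the CE generators of $A^*(\B_{5,g}^\circ)$ in \eqref{b5fact} satisfy no relations in codimension up to $\frac{g+4}{5} - 16$ beyond the determinant compatibility $b_1 = 2a_1$. Combined with $A^*(\H_{5,g}^\circ) = A^*(\B_{5,g}^\circ)/I$ and $I$ generated by \eqref{dtilde}, this forces every relation among CE classes on $\H_{5,g}^\circ$ in that codimension range to come from Lemma \ref{rels5}.
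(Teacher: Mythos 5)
Your proposal is correct and follows essentially the same route as the paper: excision plus the decomposition of $\Delta_{5,g}$ into the images of $\tilde\aW^{\mathrm{ni}}$ and $\widetilde\Delta_{5,g}$ via Lemmas \ref{whats_sing} and \ref{geo}, with the containment of the classes \eqref{wni} in the ideal generated by \eqref{dtilde} established by direct Macaulay2 computation rather than by any geometric containment, exactly as in \cite{github}. Your observation that the subsumption is not a scheme-theoretic fact but a computational one is accurate and matches the paper's treatment.
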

\begin{proof} By Lemmas \ref{geo} and \ref{whats_sing}, we have that $\Delta_{5,g}$ is the
 union of the image of $\widetilde{\Delta}_{5,g}$ in $\aW_{5,g}^\circ$ with the image of
$\widetilde{\aW}^{\mathrm{ni}}$ in $\aW_{5,g}^\circ$. The first claim now follows from excision, the fact that push forward is surjective with rational coefficients, and Lemmas \ref{niimage} and \ref{dtlem}.

Meanwhile, direct computation \cite{github} shows that $I$ is generated by the classes in \eqref{dtilde}. Since $\rho$ is flat, the classes in \eqref{dtilde} equal the classes of Lemma \ref{rels5}.
Next, \cite[Equation 5.11]{part1} says that our generators on $\B_{5,g}^\circ$ satisfy no relations in codimension less than $\frac{g+4}{5}-16$.
Thus, we have determined all relations among CE classes in codimension up to $\frac{g+4}{5}-16$ 
\end{proof}

\subsection{Presentation of the ring and stabilization} \label{ach5}
Modulo the relations in Lemma \ref{asy5}, it turns out $R^*(\H_{5,g})$ is generated by $a_1, a_2' \in R^1(\H_{5,g})$ and $a_2, c_2 \in R^2(\H_{5,g})$, as we now explain. Let $I$ be the ideal generated by the classes in \eqref{wni} and \eqref{dtilde} in the $\qq$-algebra on the CE classes.
Using Macaulay, we determined a simplified presentation
\begin{equation} \label{simpres}
\qq[c_2, a_1, \ldots, a_4, a_2', \ldots, a_4', b_2, \ldots, b_5, b_2', \ldots, b_5']/I \cong \qq[a_1, a_2', a_2, c_2]/\langle r_1, r_2, r_3, r_4, r_5\rangle, 
\end{equation}
where
{\small
\begin{align*}
r_1 &= (1064g + 3610)a_1^3-1074a_1^2a_2' + (-2148g - 7272)a_1a_2 + 2160a_2a_2' + 
\\ &\qquad +(-1064g^3 - 10830g^2 - 36680g - 41360)a_1c_2 + (1074g^2 + 7272g + 12288)a_2'c_2 \\
r_2 &= (-6412g - 21255)a_1^3 + 6207a_1^2a_2' + (12414g + 40896)a_1a_2 + (-11880)a_2a_2'+\\ &\qquad 
+ (6412g^3 + 63765g^2 + 211540g + 234480)a_1c_2+ (-6207g^2 - 40896g - 68184)a_2'c_2 \\
r_3 &= (-22845g - 67763)a_1^4 + 18141a_1^3a_2' + (54423g + 146550)a_1^2a_2 - 35640a_1a_2a_2 \\
&\qquad  + (45690g^3 + 406578g^2 + 1184220g + 1123060)a_1^2c_2 \\
&\qquad - (54423g^2 + 293100g + 372648)a_1a_2'c_2 + (17820g + 24840)a_2'^2c_2 \\
&\qquad -(17820g + 24840)a_2^2 -(18141g^3 + 146550g^2 + 372648g + 283824)a_2c_2 \\
&\qquad  -(4569g^5 + 67763g^4 + 394740g^3 + 1123060g^2 + 1546176g + 810432)c_2^2 \\
r_4 &= 133a_1^4 -537a_1^2a_2 + (-798g^2 - 5415g - 9170)a_1^2c_2 + (1074g + 3636)a_1a_2'c_2 \\&\qquad -540a_2'^2c_2 + 540a_2^2 + (537g^2 + 3636g + 6144)a_2c_2 \\&\qquad + (133g^4 + 1805g^3 + 9170g^2 + 20680g + 17472)c_2^2 \\
r_5 &= (-18545g - 68407)a_1^4 + 15261a_1^3a_2' + (45783g + 175866)a_1^2a_2 -31320 a_1a_2a_2'\\&\qquad + (37090g^3 + 410442g^2 + 1499460g + 1811300)a_1^2c_2 \\&\qquad + (-45783g^2 - 351732g - 662976)a_1a_2'c_2 + (15660g + 72360)a_2'^2c_2 \\
&\qquad + (-15660g - 72360)a_2^2 + (-15261g^3 - 175866g^2 - 662976g - 822096)a_2c_2\\
&\qquad + (-3709g^5 - 68407g^4 - 499820g^3 - 1811300g^2 - 3260256g - 2334528)c_2^2.
\end{align*}
}

\par As a corollary of the above presentation, we can use Macaulay2 to determine a spanning set for each group $R^i(\H_{5,g})$, which is actually a basis when $g$ is sufficiently large relative to $i$. We will use these spanning sets in Section \ref{formulas5} to prove another collection of classes are additive generators.
\begin{cor}\label{5span}
Suppose $g\geq 2$.
\begin{enumerate}
    \item $R^1(\H_{5,g})$ is spanned by $\{a_1,a_2'\}$.
    \item $R^2(\H_{5,g})$ is spanned by $\{a_1^2,a_1a_2',a_2,a_2'^2,c_2\}$.
    \item $R^3(\H_{5,g})$ is spanned by $\{a_1^2a_2',a_1a_2'^2,a_1c_2,a_2a_2',a_2'c_2\}$.
    \item $R^4(\H_{5,g})$ is spanned by $\{a_1^2c_2,a_1a_2'^3,a_1a_2'c_2,a_2c_2,a_2'^4,a_2'^2,a_2'^2c_2,c_2^2\}$.
    \item $R^5(\H_{5,g})$ is spanned by $\{a_1a_2'^4,a_1c_2^2,a_2'^5,a_2'c_2^2\}$
    \item $R^6(\H_{5,g})$ is spanned by $\{a_1a_2'^5,a_2'^6,c_2^3\}$
    \item For $i\geq 7$, $R^7(\H_{5,g})$ is spanned by $\{a_1a_2'^{i-1},a_2'^i\}$.
\end{enumerate}
The above spanning set for $R^i(\H_{5,g})$ is a basis when $g > 5i + 76$.
\end{cor}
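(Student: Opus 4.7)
The plan is to deduce Corollary~\ref{5span} directly from the presentation \eqref{simpres} together with Lemma~\ref{asy5}. The latter shows that the composite
\[
\qq[a_1, a_2', a_2, c_2]/\langle r_1, \ldots, r_5 \rangle \twoheadrightarrow R^*(\H_{5,g})
\]
is surjective in every degree and an isomorphism in degrees $i < \tfrac{g+4}{5} - 16$. Consequently, it suffices to exhibit $\qq$-spanning sets for each graded piece of the explicit algebra on the left; any such set automatically spans $R^i(\H_{5,g})$, and, when $g > 5i + 76$, forms a basis.

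For the statements (1)--(6) together with the range $7 \leq i \leq N$ of statement (7) for a fixed bound $N$ (say $N = 13$), I would carry out the verification in Macaulay2, extending the code already used in \cite{github}. Specifically, one computes a Gr\"obner basis of $\langle r_1, \ldots, r_5 \rangle$ in $\qq[g][a_1, a_2', a_2, c_2]$ and checks in each degree that every monomial reduces to a $\qq[g]$-combination of the proposed spanning set. Nonvanishing of $a_2'^i$ in the quotient follows from inspection of $r_1, \ldots, r_5$: none of these relations is a polynomial in $a_2'$ alone, so $a_2'^i$ cannot lie in the ideal for any $i$. The nonvanishing of the other listed generators in the verified range is visible from the Macaulay2 basis computation.

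The key step is extending (7) to all $i \geq N+1$ by an inductive/multiplicative argument. From the base case $i = 7$, Macaulay2 produces an identity of the form $f(g)\, a_1^2 a_2'^5 = h_1(g)\, a_1 a_2'^6 + h_2(g)\, a_2'^7 \pmod{\langle r_1, \ldots, r_5\rangle}$ with $f, h_1, h_2 \in \qq[g]$ and $f$ not identically zero; multiplying by $a_2'^{i-7}$ then yields the analogous identity in every degree $i \geq 7$. With this in hand, one proves (7) for $i \geq N+1$ by induction: any monomial $M$ of degree $i$ factors as $M_1 \cdot M_2$ with $\deg M_j \geq 7$ (possible whenever $i \geq 14$); the inductive hypothesis reduces each factor to a combination of $a_1 a_2'^{\deg M_j - 1}$ and $a_2'^{\deg M_j}$, so the resulting expansion involves only the three monomials $a_1^2 a_2'^{i-2}$, $a_1 a_2'^{i-1}$, and $a_2'^i$, the first of which we have just rewritten.

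The main obstacle I anticipate is controlling the dependence on $g$ throughout the Gr\"obner reductions. The relations $r_1, \ldots, r_5$ have coefficients that are honest polynomials in $g$, and rewriting a given monomial may require dividing by a nonzero polynomial in $g$. To ensure validity for every integer $g \geq 2$, one must verify that the ``pivot'' polynomials arising in the reductions (notably $f(g)$ above and its analogues in degrees $1$--$6$) have no integer roots in $[2, \infty)$; in practice these polynomials only have small rational roots, which can be checked case by case. Once this is cleared, the basis statement for $g > 5i + 76$ follows immediately from the isomorphism range in Lemma~\ref{asy5}.
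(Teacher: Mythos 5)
Your proposal follows essentially the same route as the paper: reduce to the explicit graded algebra $\qq[a_1,a_2',a_2,c_2]/\langle r_1,\dots,r_5\rangle$ via Lemma \ref{asy5} and \eqref{simpres}, verify low degrees by computer, and handle $i$ large by factoring a degree-$i$ monomial into two monomials of degree $\geq 7$ and using the degree-$7$ rewriting of $a_1^2a_2'^5$ in terms of $a_1a_2'^6$ and $a_2'^7$; the paper does exactly this (checking degrees $\leq 14$ and factoring for $i\geq 15$). Your attention to the $g$-dependence of the pivot coefficients is a legitimate implementation point that the paper's code handles implicitly.

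One small gap: for the final assertion that the spanning sets are \emph{bases} when $g>5i+76$ with $i\geq 14$, you need the pair $\{a_1a_2'^{i-1},\,a_2'^i\}$ to be linearly \emph{independent} in the quotient, not merely that $a_2'^i\neq 0$; and your justification of even that weaker claim (``none of the relations is a polynomial in $a_2'$ alone'') is insufficient, since an element of the ideal is a combination $\sum f_jr_j$ in which cancellation can occur. The paper closes this by observing that no monomial of the form $a_2'^i$ or $a_1a_2'^{i-1}$ occurs in any $r_j$, so no nonzero linear combination of these two monomials can lie in the ideal; some such argument about the monomial support of the ideal (or a direct Hilbert-series computation of $\dim S^i$) is needed to complete your proof.
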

\begin{proof}
Let $S^i$ denote the degree $i$ group of the graded ring $\qq[a_1, a_2', a_2, c_2]/\langle r_1, r_2, r_3, r_4, r_5 \rangle$.
By Proposition \ref{asy5} and Equation \eqref{simpres}, $S^i$ surjects onto $R^i(\H_{5,g})$ and is an isomorphism in degrees $i < \frac{g+4}{5}-16$, equivalently when $g > 5i + 76$.

Using Macaulay, we check that the set listed in the lemma is a basis of $S^i$ for $i \leq 14$.
For $7 \leq i \leq 14$, in particular, we see that $a_2'^i$ and $a_2'^{i-1} a_1$ form a basis for the group $S^i$. For $i \geq 15$, every monomial of degree $i$ in $a_1, a_2', a_2, c_2$ is expressible as a product of two monomials, both of degree at least $7$. Then the product of two such monomials is in the span of $a_2'^i, a_2'^{i-1} a_1$ and $a_2'^{i-2} a_1^2 = a_2'^{i-7}(a_2'^5 a_1^2)$. The last monomial is already in the span of the first two because $S^7$ is spanned by $a_2'^7, a_2'^{6} a_1$.
It follows that $a_2'^i$ and $a_2'^{i-1} a_1$ span $S^i$ for all $i \geq 15$. Meanwhile, no monomial of the form $a_2'^i$ or  $a_2'^{i-1} a_1$ appears in the relations $r_1, \ldots, r_5$. Hence, no combination of $a_2'^i$ and $a_2'^{i-1} a_1$ lies in $\langle r_1, \ldots, r_5 \rangle$, so $a_1a_2'^{i-1}$ and $a_2'^i$ are independent for all $i$.
\end{proof}

\begin{proof}[Proof of Theorem \ref{main}(3)]
Consider the equation
\[\frac{\qq[a_1, a_2', a_2, c_2]}{\langle r_1, r_2, r_3, r_4, r_5 \rangle} \rightarrow R^*(\H_{5,g}) \rightarrow R^*(\H_{5,g}^\circ) = A^*(\H_{5,g}^\circ).\]
The first map exists and is surjective by Proposition \ref{rels5}. Meanwhile, Lemma \ref{asy5} establishes that the composition is an isomorphism in degrees less than $\frac{g+4}{5}-16$. Therefore, the first map can have no kernel in codimension less than $\frac{g+4}{5}-16$.
Finally, for $i < \frac{g+4}{5}-16$, we have $A^i(\H_{5,g}) = R^i(\H_{5,g})$ by \cite[Theorem 1.4]{part1}.
The dimension of $R^i(\H_{5,g})$ follows from Corollary \ref{5span}.
\end{proof}

\section{Applications to the moduli space of curves and a generalized Picard rank conjecture} \label{app-sec}
In this section, we express the Chow rings we have computed in terms of some natural classes associated to the Hurwitz spaces. We use those expressions to prove Theorems \ref{taut} and \ref{GPRC}. The natural classes we discuss can be defined on $\Hp_{k,g}$ for any $k$. They are the kappa classes and loci parametrizing covers with certain ramification profiles.
\begin{definition}
We define the following three closed loci in $\Hp_{k,g}$:
\begin{enumerate}
    \item $T:=\overline{\{[\alpha:C\rightarrow \p^1]: \alpha^{-1}(q)=3p_1+p_2\cdots+p_{k-2}, \text{ for some } q\text{ and distinct } p_i \}}$
    \item $D:=\overline{\{[\alpha:C\rightarrow \p^1]:\alpha^{-1}(q)=2p_1+2p_2\cdots+p_{k-2}, \text{ for some } q\text{ and distinct } p_i\}}$
    \item $U:=\overline{\{[\alpha:C\rightarrow \p^1]:\alpha^{-1}(q)=4p_1+p_2\cdots+p_{k-2}, \text{ for some } q\text{ and distinct } p_i\}}$
\end{enumerate}
\end{definition}

\begin{center}
\includegraphics[width=6in]{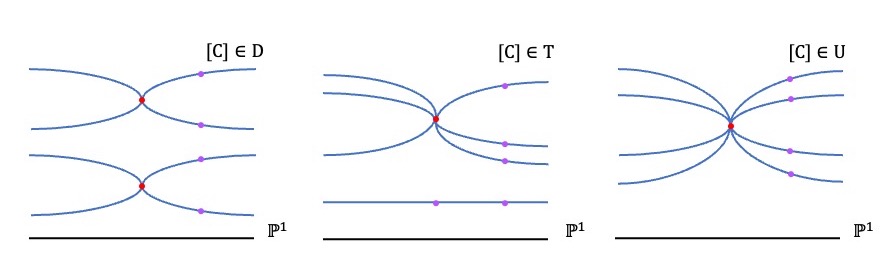}
\end{center}
The loci $T$ and $D$ have codimension $1$. The locus $U$ is one component of the intersection $T \cap D$, and $U$ has codimension $2$. 


Of course, one could consider other ramification behavior, but these three suffice for the applications in this paper. One benefit of these classes is that their push forwards to the moduli space of curves are known to be tautological. We make this precise in the next subsection. Then in the next two subsections, we rewrite the $\kappa$-classes and ramification loci in terms of CE classes to show that $[T], [D], [T]\cdot [D]$ and $[U]$ generate $R^*(\Hp_{k,g})$ as a module over $R^*(\M_g)$ in degrees $k = 4, 5$ respectively.

\subsection{Push forwards to $\M_g$}
To push forward cycles from the Hurwitz stack to $\Mg$, we first need to show that the relevant forgetful maps are proper. 
Consider the gonality stratification on the moduli space of curves:
\[
\M^d_{g}:=\{[C]\in \Mg: C \text{ has a } g^1_d \}.
\]
Because we don't require base point freeness in the equation above, we have the inclusions
$\M^d_{g}\subset \M^{d+1}_{g}$. Because gonality is lower semi-continuous,  $\Mg\setminus \M^d_{g} $ is open for any $d$. We have the map 
\[
\beta: \Hp_{k,g}\rightarrow \Mg
\]
obtained by forgetting the map to $\p^1$. After removing curves of lower gonality, we obtain a proper map
\[
\beta_k:\Hp_{k,g}\setminus \beta^{-1}(\Mg^{k-1})\rightarrow \Mg\setminus \Mg^{k-1},
\]
essentially by the same proof as \cite[Proposition 2.3]{BV}. 

\begin{rem}
If $k=3$ or $5$ and $g$ is sufficiently large, the maps $\beta_k$ are actually closed embeddings. See \cite[Proposition 2.3]{BV} for the $k=3$ case. On the other hand, the map $\beta_4$ is not injective on points because bielliptic curves admit infinitely many degree $4$ maps to $\p^1$.
\end{rem}

Because $\Mg\setminus \Mg^k$ is open in $\Mg$, there is a restriction map $A^*(\Mg)\rightarrow A^*(\Mg\setminus \Mg^k)$.
\begin{definition}\label{tautring}
The tautological ring $R^*(\Mg\setminus \Mg^k)$ of $\Mg\setminus \Mg^k$ is defined to be the image of the tautological ring $R^*(\Mg)$ under the restriction map 
$A^*(\Mg)\rightarrow A^*(\Mg\setminus \Mg^k)$.
\end{definition}

We need the following result of Faber-Pandharipande \cite{FP}, which concerns push forwards of classes of ramification loci quite generally. Let $\mu^1,\dots,\mu^m$ be $m$ partitions of equal size $k$ and length $\ell(\mu^i)$ that satisfy
\[
2g-2+2k=\sum_{i=1}^m (d-\ell(\mu^i)).
\]

Faber and Pandharipande use the Hurwitz space $\Hp_g(\mu^1,\dots,\mu^m)$ that parametrizes morphisms $\alpha:C\rightarrow \p^1$ that has marked ramification profiles $\mu^1,\dots,\mu^m$ over $m$ ordered points of the target and no ramification elsewhere. Two morphisms are equivalent if they are related by composition with an automorphism on $\pp^1$. By the Riemann-Hurwitz formula, these are covers of genus $g$ and degree $k$. They then consider the compactification by admissible covers $\overline{\Hp}_g(\mu^1,\dots,\mu^m)$. It admits a natural map to the moduli space of stable curves with marked points by forgetting the map to $\p^1$:
\[
\rho:\overline{\Hp}_g(\mu^1,\dots,\mu^m)\rightarrow \overline{\M}_{g,\sum_{i=1}^m\ell(\mu^i)}.
\]
\begin{thm}[Faber-Pandharipande \cite{FP}]
The pushforwards $\rho_*(\overline{\Hp}_g(\mu^1,\dots,\mu^m))$ are tautological classes in $A^*(\overline{\M}_{g,\sum_{i=1}^m\ell(\mu^i)})$.
\end{thm}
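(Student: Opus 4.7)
The plan is to apply the virtual localization formula to a $\mathbb{C}^{\ast}$-action on $\pp^1$ and induct on the discrete invariants of the Hurwitz problem. As a first step, I would identify $\overline{\Hp}_g(\mu^1,\ldots,\mu^m)$ with (a union of connected components of) the moduli space $\overline{\M}_{g,n}(\pp^1/p_{\bullet},\mu^{\bullet})$ of relative stable maps of degree $k$ to $\pp^1$ with prescribed ramification $\mu^i$ over the marked target points $p_i$. Up to an explicit automorphism correction, the virtual fundamental class on the relative-stable-map side agrees with $[\overline{\Hp}_g(\mu^1,\ldots,\mu^m)]$, so it suffices to prove that the pushforward of the virtual class under the forgetful map to $\overline{\M}_{g,n}$ is tautological.

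Next, I would choose a $\mathbb{C}^{\ast}$-action on the target $\pp^1$ fixing two of the branch points (say $0$ and $\infty$) and run virtual localization. The $\mathbb{C}^{\ast}$-fixed loci decompose as products of two kinds of factors: boundary strata of $\overline{\M}_{g,n}$ coming from source components contracted to $0$ or $\infty$, and \emph{rubber} relative Hurwitz spaces parametrizing covers of an unparameterized $\pp^1$ by the remaining source components. The equivariant Euler classes of the virtual normal bundles are explicit universal polynomials in $\psi$-, $\kappa$-, and $\lambda$-classes on these factors, plus classes pulled back from the lower-dimensional rubber Hurwitz factors.

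The induction is carried out on a suitable complexity measure, for instance on dimension or on $(g, k, \sum \ell(\mu^i))$ lexicographically: every rubber factor appearing in a fixed locus is itself a Hurwitz space of strictly smaller invariants, so by the inductive hypothesis its pushforward to the appropriate $\overline{\M}_{g',n'}$ is tautological. Combined with the facts that $\psi$-, $\kappa$-, and $\lambda$-classes are tautological on $\overline{\M}_{g,n}$, and that the tautological ring is closed under products, boundary inclusions, and forgetful pushforwards, each fixed-locus contribution is tautological; summing over fixed loci yields the result.

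The hard part will be the combinatorial bookkeeping that relates the space of admissible covers to the space of relative stable maps, including the orbifold automorphism factors at the nodes and the precise matching of connected components and ramification profiles after degeneration. A secondary subtlety is arranging the induction so that every rubber factor strictly decreases in the chosen complexity parameter, so that the recursion terminates; the base cases (such as $g=0$ with few branch points) then have to be checked directly, typically by noting that the relevant Hurwitz spaces are zero-dimensional and their pushforwards are multiples of fundamental classes of tautological boundary strata in $\overline{\M}_{g,n}$.
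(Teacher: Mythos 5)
This statement is not proved in the paper at all: it is imported verbatim from Faber--Pandharipande \cite{FP} (``Relative maps and tautological classes'') and used as a black box in the proof of Corollary \ref{TDUpush}. So the comparison to make is with the original argument of \cite{FP}, and your sketch is, in outline, exactly that argument: identify the admissible-cover space with a component of the space of relative stable maps to $\pp^1$ (with an automorphism correction, and with the virtual class agreeing with the ordinary fundamental class on that component), strengthen the statement to $\psi$- and $\lambda$-decorated pushforwards of virtual classes, and run virtual localization with respect to a $\mathbb{C}^*$-action on the target, handling the fixed loci by induction.

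The one point where your plan as written would fail is the termination of the induction, and it is not a ``secondary subtlety'': among the $\mathbb{C}^*$-fixed loci there is always the distinguished locus where the entire source curve is mapped into the rubber over $0$ (or $\infty$). Its rubber factor has the \emph{same} genus, degree, and ramification data as the space you started with, so no lexicographic ordering on $(g,k,\sum\ell(\mu^i))$ or on dimension makes every rubber contribution strictly smaller. Faber--Pandharipande get around this with the ``rubber calculus'': the dilaton/divisor-type equations and the cotangent-line comparison on rubber spaces, together with relations obtained by degenerating the target, let them solve for the problematic self-referential rubber term in terms of genuinely lower classes. Relatedly, the inductive statement must be formulated from the outset for the rubber (unparametrized) spaces and for pushforwards decorated by $\psi$- and $\lambda$-classes, not just for fundamental classes, since the inverse Euler classes of the virtual normal bundles deposit such decorations on the rubber factors; your hypothesis as stated only covers undecorated parametrized Hurwitz spaces and would not close the induction.
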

We then have the following diagram:
\[
\begin{tikzcd}
                                                              &                                       &                     & {\overline{\Hp}_{g}(\mu^1,\dots,\mu^m)} \arrow[d, "\rho"] \\
&&& \overline{\M}_{g,\sum_{i=1}^m\ell(\mu^i)} \arrow{d} \\
{\Hp_{k,g}\setminus\beta^{-1}(\Mg^{k-1})} \arrow[r, "\beta_k"] & \Mg\setminus\Mg^{k-1} \arrow[r, hook] & \Mg \arrow[r, hook] & \overline{\M}_g                               
\end{tikzcd}                  
\]
Because the tautological ring is closed under forgetting marked points and under the pullback from $\overline{\M}_g$ to $\Mg$, it follows that the image of $[\overline{\Hp}_g(\mu^1,\dots,\mu^m)]$ in $A^*(\Mg\setminus \Mg^{k-1})$ is a tautological class.

\begin{cor}\label{TDUpush}
Let $k\in\{3,4,5\}$. Then the classes $\beta_{k*}[T]$, $\beta_{k*}[D]$, $\beta_{k*}[U]$, and $\beta_{k*}([T]\cdot [D])$ lie in the tautological ring of $\Mg\setminus \Mg^{k-1}$.
\end{cor}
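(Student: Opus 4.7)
The strategy is to realize each of the loci $T, D, U$ and $T\cap D$ (or rather, a dense open substack thereof) as the image of an appropriate Faber--Pandharipande Hurwitz space of admissible covers, and then use their theorem. First I would isolate the generic ramification profiles. A general member of $T$ is a cover $\alpha: C \to \pp^1$ with one fiber of shape $(3,1^{k-3})$ and simply branched elsewhere; similarly $D$ corresponds to one fiber of shape $(2,2,1^{k-4})$, $U$ to one fiber of shape $(4,1^{k-4})$, and $T\cap D$ to one fiber of shape $(3,1^{k-3})$ and another of shape $(2,2,1^{k-4})$. In each case Riemann--Hurwitz determines the number of simple branch fibers $(2,1^{k-2})$ needed. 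Let $\mu_T, \mu_D, \mu_U, \mu_{T\cap D}$ denote the resulting tuples of partitions, and let $T^\circ \subset T$, etc., denote the locally closed substacks of smooth covers with exactly these ramification profiles; these are open dense in $T, D, U, T\cap D$.

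Next I would compare with $\overline{\Hp}_g(\mu_T)$. There is a natural morphism from the open locus of smooth covers inside $\overline{\Hp}_g(\mu_T)$ onto $T^\circ$, obtained by forgetting the ordering of the branch points on the target, the markings of their preimages on the source, and the $\PGL_2$-action on $\pp^1$. This morphism is finite and étale of some positive degree $d_T$ depending only on $k$ and the combinatorics of $\mu_T$ (essentially $m!$ times a product of stabilizer orders, divided by $\dim \PGL_2$-related factors which appear after passing to an $\SL_2$-quotient as in Section \ref{hssec}). Restricting the diagram displayed just before the statement of Corollary \ref{TDUpush} to the locus of smooth covers, and forgetting the $n = \sum_i \ell(\mu_T^i)$ marked points on $\overline{M}_{g,n}$, we obtain a commutative square which, after restriction to $\Mg \smallsetminus \Mg^{k-1}$, relates the composite pushforward from $\overline{\Hp}_g(\mu_T)$ to $d_T \cdot \beta_{k*}[T]$ (the contributions from the admissible--cover boundary of $\overline{\Hp}_g(\mu_T)$ and from the complement $T\smallsetminus T^\circ$ both land inside $\Mg^{k-1}$ or inside $\Mg$ itself and therefore restrict to tautological classes on $\Mg \smallsetminus \Mg^{k-1}$).

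The Faber--Pandharipande theorem recalled just before the corollary says that $\rho_*[\overline{\Hp}_g(\mu_T)]$ is tautological on $\overline{M}_{g,n}$; tautologicity is preserved by forgetting the marked points and by restriction to $\Mg \smallsetminus \Mg^{k-1}$ (Definition \ref{tautring}). Dividing by $d_T > 0$ gives that $\beta_{k*}[T]$ is tautological on $\Mg \smallsetminus \Mg^{k-1}$. Exactly the same argument applied to $\mu_D$, $\mu_U$, and $\mu_{T\cap D}$ handles $\beta_{k*}[D]$, $\beta_{k*}[U]$, and $\beta_{k*}([T]\cdot[D]) = \beta_{k*}[T\cap D]$, where the last equality uses that $T$ and $D$ meet transversely along $T\cap D \supset U$ in codimension $2$ away from a set of codimension $\geq 3$.

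The main obstacle is the bookkeeping in the second paragraph: one must check carefully that the pushforward of the fundamental class of $\overline{\Hp}_g(\mu)$ really does recover a positive multiple of $\beta_{k*}$ of the corresponding closure, and in particular that the class contributions from boundary admissible covers and from the degenerations $T\smallsetminus T^\circ$, etc., are either tautological or supported inside $\Mg^{k-1}$. Both of these follow from the fact that a non-smooth admissible cover has either a reducible source or a source of lower geometric genus, and covers whose source has gonality $\leq k-1$ are by definition thrown out when restricting to $\Mg \smallsetminus \Mg^{k-1}$; after this restriction the remaining boundary contributions are pulled back from $\kappa$-classes on $\overline{M}_{g,n}$, hence tautological.
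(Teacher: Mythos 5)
Your treatment of $\beta_{k*}[T]$, $\beta_{k*}[D]$, and $\beta_{k*}[U]$ is essentially the paper's argument: identify the generic ramification profile, match the locus with the image of the corresponding Faber--Pandharipande space of admissible covers, and invoke their theorem together with the fact that tautological classes are preserved under forgetting marked points and under restriction. The gap is in the fourth class. You assert that, up to codimension $\geq 3$, the locus $T\cap D$ is the closure of the locus of covers with one fiber of profile $(3,1^{k-3})$ and a \emph{second} fiber of profile $(2,2,1^{k-4})$, and that $[T]\cdot[D]=[T\cap D]$ by transversality. Neither holds. The intersection $T\cap D$ has several codimension-$2$ components: besides the one you describe, $U$ itself is a component (the paper says so right after defining $T$, $D$, $U$ --- a quadruple point arises both as a triple point colliding with a simple branch point and as two double points colliding, so $U$ lies in the closure of $T$ and of $D$ but not in the closure of your two-special-fibers locus, since both have codimension $2$); and for $k=5$ there is a further component whose generic member has a single fiber of profile $(3,2)$. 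Along $U$ the divisors $T$ and $D$ do not meet transversely, and in any case $U$ has codimension exactly $2$, so it cannot be absorbed into a ``set of codimension $\geq 3$''. Your argument therefore accounts for only one component of the support of $[T]\cdot[D]$.

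The repair is cheap, and is what the paper does: no intersection multiplicities are needed. Since $[T]\cdot[D]$ is a codimension-$2$ class supported on $T\cap D$, it is a $\qq$-linear combination of the fundamental classes of the codimension-$2$ components of $T\cap D$, and each such component is the image of a Faber--Pandharipande space --- take $\mu_1=(4,1^{k-4})$ with all other $\mu_i$ simple; $\mu_1=(3,2,1^{k-5})$ with all other $\mu_i$ simple (only relevant for $k=5$); or $\mu_1=(3,1^{k-3})$, $\mu_2=(2,2,1^{k-4})$ with all other $\mu_i$ simple. Each of these pushes forward to a tautological class, hence so does $\beta_{k*}([T]\cdot[D])$. (For $k=3$ the loci $D$ and $U$ are empty and there is nothing to prove beyond $[T]$.)
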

\begin{proof}
We explain the proof in the case $k=5$. The other cases are similar.
The image of $T$, $D$, and $U$ in $\Mg\setminus\Mg^{k-1}$ are the images of the corresponding spaces considered by Faber-Pandharipande. Indeed, for $T$, take $\mu_1=(3,1,1)$ and $\mu_i=(2,1,1,1)$ for all other $i$. For $D$, take $\mu_1=(2,2,1)$ and $\mu_i=(2,1,1,1)$ for all other $i$. For $U$, take $\mu_1=(4,1)$ and $\mu_i=(2,1,1,1)$ for all other $i$. 
\par One can see that the image of $T \cap D$ under $\beta_k$ is supported on the image of the following three spaces considered by Faber-Pandharipande:
\begin{enumerate}
    \item The image of the space with $\mu_1 = (4, 1)$ and all other $\mu_i = (2, 1, 1, 1)$
    \item The image of the space with $\mu_1 = (3, 2)$ and all other $\mu_i = (2, 1, 1, 1)$
    \item The image of the space with $\mu_1 = (3, 1, 1)$ and $\mu_2 = (2, 2, 1)$
\end{enumerate}
It follows that the pushforward of $[T]\cdot [D]$ is a linear combination of the restrictions of images of the above three spaces.
Hence, $\beta_{k*}([T] \cdot [D])$ is also tautological.
\end{proof}

\subsection{Formulas in degree $4$}
In this section, we compute formulas for the some of the natural classes on $\Hp_{4,g}$. We will do the computations in $A^*(\H_{4,g})$ in order to simplify the intersection theory calculation. This simplification is of no consequence to the end results because of the isomorphism $A^*(\H_{4,g})\cong A^*(\Hp_{4,g})$.

Deopurkar-Patel \cite[Proposition 2.8]{DP2} computed formulas for the classes of $T$ and $D$ in terms of $\kappa_1$ and $a_1$. In \cite[Example 3.12]{part1} we explained how to write the $\kappa$-classes in terms of CE classes, so we obtain the following.

\begin{lem} \label{4codim1}
The following identities hold in $A^1(\H_{4,g})$
\[\kappa_1 = (12g + 24)a_1 - 12a_2', \qquad [T] = (24g + 60)a_1 - 24a_2', \qquad [D] = (-32g - 80)a_1 + 36a_2'.\]
\end{lem}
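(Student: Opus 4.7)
The plan is to combine two ingredients already in place: the formulas of Deopurkar--Patel \cite[Proposition 2.8]{DP2} expressing $[T]$ and $[D]$ as $\qq$-linear combinations of $\kappa_1$ and $a_1$, and the expression of the kappa classes in terms of CE classes recorded in \cite[Example 3.12]{part1}. The first identity in the lemma is a specialization of the Part I computation. Given this, the other two identities follow by substituting the resulting expression for $\kappa_1$ into the Deopurkar--Patel formulas and collecting the coefficients of $a_1$ and $a_2'$.

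To re-derive the $\kappa_1$ identity directly (as a sanity check on the coefficients), I would factor $f = \pi \circ \alpha$ with $\alpha:\C \to \P$ the universal degree-$4$ cover, use $\omega_f = \omega_\alpha \otimes \alpha^\ast \omega_\pi$ together with $c_1(\omega_\pi) = -2z$, and expand
\[
c_1(\omega_f)^2 = c_1(\omega_\alpha)^2 - 4\, c_1(\omega_\alpha)\cdot \alpha^\ast z + 4\, \alpha^\ast z^2.
\]
Push forward via $\pi_\ast \alpha_\ast$ and apply the projection formula. The terms involving $\alpha_\ast \O_\C = \O_\P \oplus \E^\vee$ and (dually) $\alpha_\ast \omega_\alpha = \O_\P \oplus \E$ are read off from the Casnati--Ekedahl data, while $\alpha_\ast c_1(\omega_\alpha)^2$ can be computed either via GRR or from the Koszul resolution of $\C$ as the zero scheme of a section of $\W = \O_{\pp \E^\vee}(2) \otimes \gamma^\ast \F^\vee$. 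Converting everything to the classes $a_1, a_2, a_2'$ (and using $a_1' = g+3$) and simplifying yields $\kappa_1 = (12g+24) a_1 - 12 a_2'$.

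The main thing to watch for is a convention check between \cite{DP2} and the present paper (in particular the $\SL_2$ vs.\ $\PGL_2$ quotient), but since we work with rational coefficients and pullback along the $\mu_2$-banded gerbe $[\H_{4,g}^\dagger/\SL_2] \to [\H_{4,g}^\dagger/\PGL_2]$ is an isomorphism on rational Chow rings by \cite[Section 2.3]{PV}, no adjustment is required. With that check dispatched, the proof is a mechanical substitution, and I would not anticipate any serious obstacle.
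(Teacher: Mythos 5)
Your proposal matches the paper's proof: the authors likewise cite Deopurkar--Patel \cite[Proposition 2.8]{DP2} for $[T]$ and $[D]$ in terms of $\kappa_1$ and $a_1$, and \cite[Example 3.12]{part1} for $\kappa_1$ in terms of the CE classes, then substitute. Your extra sanity-check derivation of the $\kappa_1$ identity and the $\SL_2$ versus $\PGL_2$ convention check are fine but not needed beyond what the paper already records.
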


Next, we compute the codimension two class $[U]$. In particular, we will see that $[U]$ is not in the span of products of codimension $1$ classes, from which it follows that the classes of $[T], [D], [U]$ generate $R^*(\H_{4,g})$ as a ring.

\begin{lem}\label{4quad}
The class of the quadruple ramification stratum $U$ on $\H_{4,g}$ is \[[U]= 36a_1a_2' -(32g + 80)a_1^2+(4g + 4)a_2-(4g+4)b_2.\]
Modulo the relations from Proposition \ref{4image}, we have $[U]=4a_3'$. 
\end{lem}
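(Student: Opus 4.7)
The plan is to realize $[U]$ as a pushforward of a top Chern class of a refined principal parts bundle, in the spirit of the calculation of $[\Delta_{4,g}]$ in Proposition \ref{4image}. The key input is the local characterization of quadruple points: a cover $(E, F, \delta) \in \H_{4,g}$ lies in $U$ iff there exist $p \in \pp E^\vee$, a tangent line $\ell \subset T_p(\pp E^\vee|_{\gamma(p)})$ in the vertical fiber, and a point $[v] \in \pp F|_{\gamma(p)}$ of the pencil, such that both conics of the pencil vanish at $p$ with common tangent $\ell$, and moreover the conic $\delta(v)$ is a scalar multiple of $\ell^2$. In local coordinates $(x_1, x_2)$ at $p$ with $\ell = \{x_2 = 0\}$, writing $Q_i = a_i x_1 + b_i x_2 + (\text{quadratic})$, this forces $a_1 = a_2 = 0$, then picks out the unique combination $\mu Q_1 + \nu Q_2$ with vanishing $x_2$-linear part, $x_1^2$-coefficient, and $x_1x_2$-coefficient. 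Crucially, $(\ell, [v])$ is uniquely determined by $p$, so the forgetful map from this data to $U$ is birational.

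Accordingly, I introduce the auxiliary space $X := \pp T_{\pp \E^\vee/\P} \times_\P \pp \F$, a $\pp^1 \times \pp^1$-bundle over $\pp \E^\vee$ of relative dimension $5$ over $\B_{4,g}^\circ$. On $X$ there are two filtrations: the tautological subbundle on $\pp T_{\pp \E^\vee/\P}$ dualizes to $0 \to \Omega_y \to a^*\Omega_{\pp \E^\vee/\P} \to \Omega_x \to 0$ with $\Omega_x, \Omega_y$ both of rank $1$, and the tautological subbundle $\O_{\pp \F}(-1) \hookrightarrow \F$, tensored with $\O_{\pp \E^\vee}(2)$, produces a line subbundle of the pullback of $\W = \O_{\pp \E^\vee}(2) \otimes \gamma^*\F^\vee$ whose line bundle quotient we call $\W'$. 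Feeding these into the refined construction of Section \ref{dir-ref} (case 6.4B) with $S = \{1,x\}$ and $S' = \{1,x,y,x^2,xy\}$ produces the rank-$7$ bundle $P^{S \subset S'}_{\pp \E^\vee/\P}(\W \to \W')$ on $X$, with graded pieces $\W, \Omega_x\otimes\W, \Omega_y\otimes\W', \Sym^2\Omega_x\otimes\W', \Omega_x\otimes\Omega_y\otimes\W'$ of ranks $2, 2, 1, 1, 1$. I emphasize that the principal parts are taken relative to $\P$, since the quadruple-point condition is purely vertical over $\pp^1$.

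An extension of Lemma \ref{famjet} shows the evaluation map
\[
a^*\gamma^*\pi^*\aW^\circ_{4,g} \to P^{S \subset S'}_{\pp \E^\vee/\P}(\W \to \W')
\]
is surjective, so the preimage $\widetilde{U}$ of the zero section is a vector bundle of codimension $7$, and by the local analysis it maps birationally to $U \subset \aW^\circ_{4,g}$. The Trapezoid Lemma \ref{trap} then yields
\[
[U] \;=\; \rho^*(\pi \circ \gamma \circ a)_*\, c_7\!\left(P^{S \subset S'}_{\pp \E^\vee/\P}(\W \to \W')\right) \in A^2(\aW^\circ_{4,g}).
\]
Computing this pushforward via the Chern roots of the graded pieces and the Chow ring of $X$ (a tower of two $\pp^1$-bundles over $\pp \E^\vee$) using Macaulay2 produces the first formula. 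The second claim, reducing to $4a_3'$ modulo the relations of Proposition \ref{4image}, is then a polynomial identity in $\qq[a_1, a_2, a_3, a_2', a_3', b_2, b_2', c_2]$ modulo the ideal generated by \eqref{relsgen}, verified in the accompanying code at \cite{github}.

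The main obstacle is verifying the birationality of $\widetilde{U} \to U$: one must ensure that the vanishing locus of the section of $P^{S \subset S'}_{\pp \E^\vee/\P}(\W \to \W')$ induced by $\delta^{\mathrm{univ}}$ parametrizes exactly the quadruple-point data $(p, \ell, [v])$ with multiplicity one, and not a spurious larger locus. This is precisely the content of the local-coordinate analysis in the first paragraph, together with the codimension count $7 - 5 = 2$ matching the expected codimension of $U$ in $\aW^\circ_{4,g}$.
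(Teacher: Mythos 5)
Your construction is essentially the paper's: the same auxiliary space $X = \pp T_{\pp \E^\vee/\P} \times_{\P} \pp \F$, the same rank-$7$ refined bundle $P^{S \subset S'}_{\pp \E^\vee/\P}(\W \to \W')$ with $S = \{1,x\}$, $S' = \{1,x,y,x^2,xy\}$, the same local characterization (U1)--(U2) of quadruple points with uniqueness of the pair $(\ell,[v])$, and the same pushforward of $c_7$ computed in Macaulay2. The graded pieces and ranks you list are correct.

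The one substantive deviation is where the computation lives, and it leaves a gap relative to the statement. You set everything up over $\B_{4,g}^\circ$ and apply the Trapezoid Lemma on $\aW_{4,g}^\circ$; this computes the class of the quadruple locus in $A^2(\aW_{4,g}^\circ)$ and hence, after restriction, the image of $[U]$ in $A^2(\H_{4,g}^\circ)$. But the lemma asserts an identity in $A^2(\H_{4,g})$, and the complement of $\H_{4,g}^\circ$ in $\H_{4,g}$ has codimension exactly $2$ (it contains factoring covers), so the restriction map $A^2(\H_{4,g}) \to A^2(\H_{4,g}^\circ)$ need not be injective and your identity does not lift. The paper sidesteps this by running the whole construction over $\H_{4,g}$ itself, using the universal section $\delta^{\mathrm{univ}}$ of $\W$ whose zero locus is the universal curve: there is then no need for surjectivity of an evaluation map, and the expected codimension $7$ of $\widetilde{U} \subset X$ is verified directly from the finiteness of $\widetilde{U} \to U$ and $\codim(U \subset \H_{4,g}) = 2$, giving $[\widetilde{U}] = c_7(Q)$ on the nose. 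If you insist on working over $\aW_{4,g}^\circ$, you would need a separate argument that the discrepancy is supported in codimension $\geq 3$, which is false here; so adopt the paper's base instead.
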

\begin{proof}
The fibers of a degree $4$ cover $\alpha:C\rightarrow \p^1$ are given by the base locus of a pencil of conics. A pencil of conics has base locus $4p$ if and only if every element of the pencil is tangent to a given line $L$ and $2L$ is a member of the pencil.
Equivalently, $4p$ is the base locus of a pencil of conics if and only if in some choice of local coordinates $x, y$ at $p$
\begin{enumerate}
    \item[(U1)] All members of the pencil are tangent to the line $y = 0$ at $p$, i.e. have vanishing coefficient of $x, 1$.
    \item[(U2)] Some member of the pencil is a multiple of $y^2$, i.e. has vanishing coefficient of $1, x, y, x^2, xy$.
\end{enumerate}
Note that the base locus of a pencil containing two double lines is not a \emph{curve-linear scheme} (i.e. a subscheme of smooth curve) since it has two dimensional tangent space at the intersection point. Therefore, if
If $p$ is a point of quadruple ramification on a smooth curve $C \xrightarrow{\alpha} \pp^1$, then the line $L \subset (\pp E_\alpha^\vee)_{\alpha^{-1}(\alpha(p))} \cong \pp^2$ is unique. That is, there is a unique direction and member of the pencil satisfying (U1) and (U2).

We will use the theory of restricted bundles of principal parts developed in Section 6 to characterize the covers satisfying these conditions. 
Let $X := \pp T_{\pp \E^\vee/\P} \times_{\P} \pp \F$. The first factor $\pp T_{\pp \E^\vee/\P}$ keeps track of a ``$x$-direction" and the second factor $\pp \F$ keeps track of a particular member of the pencil. We will apply the constructions of Section 6 to the tower
\[X \xrightarrow{a} \pp \E^\vee \xrightarrow{\gamma} \P.\]
In particular, pulling back the dual of the tautological sequence on the $\pp T_{\pp \E^\vee/\P}$ factor, we obtain a filtration on $X$
\[0 \rightarrow \Omega_y \rightarrow a^*\Omega_{\pp\E^\vee/\P} \rightarrow \Omega_x \rightarrow 0. \]
Meanwhile, pulling back the dual of the tautological sequence from the $\pp \F$ we obtain a quotient 
\[a^* \gamma^* \F^\vee \rightarrow \O_{\pp \F}(1) \rightarrow 0\]
Tensoring with $a^*\O_{\pp \E^\vee}(2)$, we obtain a filtration of $a^* \W = a^* (\gamma^*\F^\vee \otimes \O_{\pp \E^\vee}(2))$:
\[0 \rightarrow K \rightarrow a^*\W \rightarrow \O_{\pp \F}(1) \otimes \O_{\pp \E^\vee}(2) =: \W' \rightarrow 0.\]

To track the data in (U1) and (U2) we define $Q := P^{S \subset S'}_{\pp \E^\vee/\P}(\W \to \W')$
where
$S = \{1, x\}$ and $S' = \{1, x, y, x^2, xy\}$. This is represented by the diagram
      \begin{equation} \label{Qdiag}
   \begin{tikzpicture}[scale = .6]
     \draw[white] (3, 0) circle (5pt);
 \filldraw (0, 0) circle (5pt);
\draw (0, -1) circle (5pt);
 \filldraw (1, 0) circle (5pt);
 \draw (2, 0) circle (5pt);
 \begin{scope}[xshift=2cm]
 \fill[blue] (0,0) -- (90:1ex) arc (90:270:1ex) -- cycle;
 \end{scope}
  \begin{scope}[yshift=-1cm]
 \fill[blue] (0,0) -- (90:1ex) arc (90:270:1ex) -- cycle;
 \end{scope}
   \begin{scope}[yshift=-1cm]
 \fill[blue] (0,0) -- (90:1ex) arc (90:270:1ex) -- cycle;
 \end{scope}
 \draw (1, -1) circle (5pt);
    \begin{scope}[yshift=-1cm, xshift = 1cm]
 \fill[blue] (0,0) -- (90:1ex) arc (90:270:1ex) -- cycle;
 \end{scope}
\end{tikzpicture}
   \end{equation}
There is a natural quotient $a^*P^2_{\pp \E^\vee/\P}(\W) \to Q$, corresponding to the picture below.
\begin{center}
   \begin{tikzpicture}[scale = .6]
\begin{scope}[xshift=-5cm]
 \filldraw (0, 0) circle (5pt);
\filldraw (0, -1) circle (5pt);
 \filldraw (1, 0) circle (5pt);
\filldraw (0, -2) circle (5pt);
 \filldraw (2, 0) circle (5pt);
\filldraw (1, -1) circle (5pt);
\end{scope} 
\node at (-1.5, -.5) {$\longrightarrow$};
     \draw[white] (3, 0) circle (5pt);
 \filldraw (0, 0) circle (5pt);
\draw (0, -1) circle (5pt);
 \filldraw (1, 0) circle (5pt);
 \draw (2, 0) circle (5pt);
 \begin{scope}[xshift=2cm]
 \fill[blue] (0,0) -- (90:1ex) arc (90:270:1ex) -- cycle;
 \end{scope}
  \begin{scope}[yshift=-1cm]
 \fill[blue] (0,0) -- (90:1ex) arc (90:270:1ex) -- cycle;
 \end{scope}
   \begin{scope}[yshift=-1cm]
 \fill[blue] (0,0) -- (90:1ex) arc (90:270:1ex) -- cycle;
 \end{scope}
 \draw (1, -1) circle (5pt);
    \begin{scope}[yshift=-1cm, xshift = 1cm]
 \fill[blue] (0,0) -- (90:1ex) arc (90:270:1ex) -- cycle;
 \end{scope}
\end{tikzpicture}
\end{center}

As discussed in Section \ref{re4} the Casnati--Ekedahl theorem determines a global section $\delta^{\mathrm{univ}}$ of $\W$ whose vanishing is the universal curve. The induced section of $Q$
\begin{equation} \label{Qsec} \O_X \xrightarrow{a^* \delta^{\mathrm{univ}}{'}}  a^*P^2_{\pp \E^\vee/\P}(\W) \to Q\end{equation}
vanishes at a point of $X$ over $p$
precisely when conditions (U1) and (U2) above are satisfied at $p$ for the corresponding direction and member of the pencil.
Let $\widetilde{U}$ be the vanishing locus of the section in \eqref{Qsec}.

The map $a$ sends $\widetilde{U}$ one-to-one onto the universal quadruple ramification point. In turn, the universal quadruple ramification point maps generically one-to-one onto $U$, so 
\[[U] = \pi_*\gamma_*a_*[\widetilde{U}].\]
Since all fibers of the map $\widetilde{U} \to U$ are finite we have $\dim \widetilde{U} = \dim U$. Note that $X$ has relative dimension $2$ over $\pp \E^\vee$, which has relative dimension $3$ over $\H_{4,g}$. Thus, we have 
\[\codim(\widetilde{U} \subset X) = \codim(U \subset \H_{4,g}) + \text{relative dim of $X/\H_{4,g}$} = 2 + (2 + 3) = 7.\]
Meanwhile, $\rank \Omega_x = \rank \Omega_y = \rank \W' = \rank K = 1$. 
Each dot in the diagram \eqref{Qdiag} corresponds to a piece of a filtration of $Q$. The filled dots $\CIRCLE$ correspond to pieces of rank $2$ and half-filled dots ${\color{blue} \LEFTcircle }$ correspond to pieces of rank $1$.
Hence, $\rank Q = 7$. In particular, $\codim(\widetilde{U} \subset X) = \rank Q$, so $[\widetilde{U}] = c_7(Q).$ The top Chern class of $Q$ can be computed using its filtration, and its push forward to $\H_{4,g}$ is computed in Macaulay2 \cite{github}, which gives the expressions in the statement of the Lemma.
\end{proof}

In the example below, we provide expressions for some other codimension $2$ classes in terms of our preferred generators.
\begin{example}
Using the relations provided in the code, we can rewrite $c_2$ in terms of our preferred generators as
\begin{equation} \label{c2}
c_2=\frac{3}{g^2+4g+3}a_1^2-\frac{8}{g^3+6g^2+11g+6}a_3'
\end{equation}
Using \cite[Example 3.12]{part1}, we can compute

 \begin{align} 
\kappa_2 &=  
a_1b_2' -6a_1a_2' +(6g+6)a_1^2 -(6g-6)a_2+(g-3)b_2\\
& \qquad -(2g^3+6g^2+6g-14)c_2
+
4a_3' \notag \\
&= \frac{44g^2+200g+300}{g^2+4g+3}a_1^2-\frac{44}{g+1}a_1a_2'+\frac{2g^3-32g^2+138g-12}{3g^3+18g^2+33g+18}a_3'. \label{k2}
 \end{align}
Since the coefficient of $a_3'$ is non-zero in \eqref{c2} (resp. \eqref{k2}), we see that $c_2$ (resp. $\kappa_2$) may be used instead of $a_3'$ as the generator of $R^*(\H_{4,g})$ in codimension $2$. 
\end{example}

We can now prove Theorem \ref{GPRC} in when $k = 4$.
\begin{proof}[Proof of Theorem \ref{GPRC}, $k = 4$]
By Lemmas \ref{4codim1} and \ref{4quad} and Theorem \ref{main}, it follows that $[T], [D], [U]$ generate $R^*(\H_{4,g})$. Moreover, $R^i(\H_{4,g}) \to A^i(\H_{4,g}^{\nf})$ is surjective in degrees $i  \leq \frac{g+3}{4} - 4$ by Theorem \ref{main} (2). We have that $A^*(\H_{4,g}^{\nf}) \to A^*(\H_{4,g}^{s})$ is surjective and the ideal generated by $T, D, U$ is in the kernel. Hence, $A^i(\H_{4,g}^{s}) = 0$ for $i  \leq \frac{g+3}{4} - 4$.
\end{proof}

Above, we showed that $[T], [D], [U]$ generate $R^*(\H_{4,g})$ as a \emph{ring}. We now show that $[T], [D], [U], [T]\cdot [D]$ generate $R^*(\H_{4,g})$ as a \emph{module} over $\qq[\kappa_1]$.

\begin{lem} \label{first4}
The following are true
\begin{enumerate}
    \item $R^1(\H_{4,g})$ is spanned by $[T]$ and $[D]$. Alternatively, it is spanned by $[T]$ and $\kappa_1$.
    \item $R^2(\H_{4,g})$ is spanned by $[T]\kappa_1, [D]\kappa_1, [T] \cdot [D]$ and $[U]$.
    \item $R^3(\H_{4,g})$ is spanned by $\kappa_1^2 [T], \kappa_1^2 [D], \kappa_1 [U]$
    \item $R^4(\H_{4,g})$ is spanned by $\kappa_1^4$ and $\kappa_1^2 [U]$.
    \item For $i\geq 5$, $R^i(\H_{4,g})$ is spanned by $\kappa_1^i$. 
\end{enumerate}
\end{lem}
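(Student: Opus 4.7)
The plan is to combine the explicit formulas for $[T]$, $[D]$, $[U]$, $\kappa_1$ in terms of the generators $a_1, a_2', a_3'$ (from Lemma \ref{4codim1} and Lemma \ref{4quad}) with the presentation $\qq[a_1, a_2', a_3']/\langle r_1, r_2, r_3, r_4\rangle$ of $R^*(\H_{4,g})$ coming from \eqref{cealg} and the basis description in Corollary \ref{rcor}. Each part reduces to checking that a specific matrix of coefficients (polynomial in $g$) is nonsingular, which can be verified symbolically in the Macaulay2 script of \cite{github}.

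For (1), one has $R^1(\H_{4,g}) = \mathrm{span}(a_1, a_2')$ and the $2 \times 2$ coefficient matrices of $([T],[D])$ and of $([T], \kappa_1)$ with respect to this basis have determinants $24(4g+10)$ and $-144$ respectively, both nonzero for $g \geq 2$. For (2), I would observe that every product of two classes from $R^1$ lies in $\mathrm{span}(a_1^2, a_1a_2', a_2'^2)$, a subspace of rank $3$ inside the rank-$4$ group $R^2(\H_{4,g})$. The class $[U]$ equals $4a_3'$ modulo relations, so it supplies the missing $a_3'$-direction. It then suffices to check that $\kappa_1[T], \kappa_1[D], [T][D]$ span $\mathrm{span}(a_1^2, a_1a_2', a_2'^2)$, i.e. that the corresponding $3 \times 3$ matrix of coefficients has nonzero determinant as a polynomial in $g$.

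For (3), I would use that the relations $r_1, r_2, r_3$ express $a_1^3, a_1^2 a_2', a_1 a_2'^2$ in terms of the basis $\{a_1a_3', a_2'^3, a_2'a_3'\}$ of $R^3(\H_{4,g})$. Products of three codim-$1$ classes then fall in $\mathrm{span}(a_2'^3, \text{some line in }\mathrm{span}(a_1a_3', a_2'a_3'))$, so $\kappa_1^2[T]$ and $\kappa_1^2[D]$ span at most this $2$-plane; meanwhile $\kappa_1 \cdot [U] \equiv 4\kappa_1 a_3' = 48(g+2)a_1a_3' - 48 a_2'a_3'$ provides a complementary direction. The main step is to show the $3 \times 3$ coefficient matrix of $(\kappa_1^2[T], \kappa_1^2[D], \kappa_1[U])$ in the basis $\{a_1a_3', a_2'^3, a_2'a_3'\}$ has nonvanishing determinant. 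For (4), the basis of $R^4(\H_{4,g})$ is $\{a_2'^4, a_3'^2\}$; every product of codim-$1$ classes expresses as a polynomial in $a_1, a_2'$, and after reduction by $r_1,\dots,r_4$ collapses onto the $a_2'^4$-direction. Consequently $\kappa_1^4$ covers the $a_2'^4$-line and $\kappa_1^2[U] \equiv 4\kappa_1^2 a_3'$ supplies the $a_3'^2$-direction via $r_4$; I would verify that the $2\times 2$ coefficient matrix of $(\kappa_1^4, \kappa_1^2[U])$ in $\{a_2'^4, a_3'^2\}$ has nonzero determinant. For (5), Corollary \ref{rcor} gives $R^i(\H_{4,g}) = \qq \cdot a_2'^i$ for $i \geq 5$ (for $g$ large, extended to all $g$ by surjectivity of the presentation), so it only remains to check that $\kappa_1^i$ projects to a nonzero multiple of $a_2'^i$, equivalently that the scalar obtained after reducing $((12g+24)a_1 - 12a_2')^i$ modulo the relations is nonzero as a function of $g$.

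The main obstacle is purely computational: each determinant above is a polynomial in $g$ whose nonvanishing must be confirmed, and for (4)–(5) one has to chase monomials of increasing degree through the four relations $r_1,\dots,r_4$. However, because the relations are explicit and the presentation of the ring is finite, every check can be performed symbolically in the Macaulay2 script accompanying the paper; I expect no surprises beyond verifying that the leading coefficients in $g$ do not happen to vanish. Once these nonvanishing determinants are in hand, each claimed spanning set has the correct cardinality and is linearly independent, hence is a basis for generic $g$, and by surjectivity of the presentation onto $R^i(\H_{4,g})$ it spans for all $g \geq 2$.
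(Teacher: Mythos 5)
Your overall strategy is the same as the paper's: write $[T],[D],[U],\kappa_1$ in terms of $a_1,a_2',a_3'$ via Lemmas \ref{4codim1} and \ref{4quad}, reduce modulo the presentation \eqref{cealg} using the spanning sets of Corollary \ref{rcor}, and verify nonsingularity of coefficient matrices in Macaulay2. For parts (3) and (4) this is exactly what the paper does. Two points, however, need repair.

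First, in part (2) your criterion ``nonzero determinant as a polynomial in $g$'' is not the right test, and in this instance it actively fails: the $3\times 3$ matrix expressing $\kappa_1[T],\kappa_1[D],[T]\cdot[D]$ in terms of $a_1^2, a_1a_2', a_2'^2$ has determinant proportional to $(g-2)(2g+5)$, which vanishes at $g=2$. Indeed, at $g=2$ one has $[D]=-144a_1+36a_2'=-3\kappa_1$, so $\kappa_1[D]=-3\kappa_1^2$ and $[T]\cdot[D]=-3\kappa_1[T]$, and the three products span at most a $2$-plane. What one must check throughout is that the determinant has no integer roots $g\geq 2$, not merely that it is a nonzero polynomial; your closing remark that you ``expect no surprises beyond verifying that the leading coefficients in $g$ do not happen to vanish'' misses exactly this kind of low-genus degeneration (the paper's proof of (2) instead argues that \emph{all} products of codimension-one classes together with $[U]$ span, using part (1) and Corollary \ref{rcor}, rather than relying on that particular determinant). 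Second, part (5) as you state it requires infinitely many symbolic checks, one for each $i\geq 5$. The paper reduces this to finitely many: a direct computation shows $\kappa_1^i$ is a nonzero multiple of $a_2'^i$ for $5\leq i\leq 10$, and for $i\geq 11$ one factors $\kappa_1^i$ as a product of powers $\kappa_1^j$ with $5\leq j\leq 10$, each already known to be a nonzero multiple of the corresponding $a_2'^j$. You need some such bootstrapping step (the same device is needed to justify your part (4) heuristic that degree-$4$ products of codimension-one classes reduce correctly; as stated, ``collapses onto the $a_2'^4$-direction'' is inaccurate, since reduction by $r_1,\dots,r_4$ also produces $a_3'^2$ terms, though your final $2\times 2$ determinant check is the correct one).
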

\begin{proof}
(1) By Lemma \ref{4codim1}, any pair of $[T], [D], \kappa_1$ span $R^1(\H_{4,g})$.

(2) By Corollary \ref{rcor}, we have that  $R^2(\H_{4,g})$ is spanned by $\{a_1^2, a_1a_2', a_2'^2, a_3'\}$. Hence, Lemma \ref{4quad} shows that $[U]$ and products of codimension $1$ classes span $R^2(\H_{4,g})$.

(3) Since $a_1, a_2', a_3'$ generate $R^*(\H_{4,g})$ as a ring, the classes
$\{a_1^3, a_1^2a_2', a_1a_2'^2, a_2'^3, a_1a_3', a_2'a_3'\}$ span $R^3(\H_{4,g})$. To show that $\kappa_1^2 [T], \kappa_1^2 [D],$ and $\kappa_1[U]$ span $R^3(\H_{4,g})$,
we first rewrite them in terms of CE classes.
It then suffices to see that these three classes, together with the codimension $3$ relations $r_1, r_2, r_3$ of Section \ref{ach}, span $\{a_1^3, a_1^2a_2', a_1a_2'^2, a_2'^3, a_1a_3', a_2'a_3'\}$.
One way to accomplish this is as follows. 
By Corollary \ref{rcor}, $\{a_1a_3',a_2'^3,a_2'a_3'\}$ is a spanning set modulo $r_1, r_2, r_3$ and one can readily rewrite
$\kappa_1^2[T], \kappa_1^2 [D],$ and $\kappa_1[U]$ in terms of $\{a_1a_3',a_2'^3,a_2'a_3'\}$ modulo the relations. We record the coefficients of these expressions in a $3 \times 3$ matrix. The determinant of this matrix has non-vanishing determinant for all $g$, so we conclude that $\kappa_1^2[T], \kappa_1^2 [D],$ and $\kappa_1[U]$ are also a spanning set modulo the relations. The calculation of the determinant is provided at \cite{github}.

(4) The proof is similar to the previous part.  By Corollary \ref{rcor}, $\{a_2'^4, a_3'^2\}$ spans the degree $4$ piece of $\qq[a_1, a_2', a_3']/\langle r_1, r_2, r_3, r_4 \rangle$.
We then write a $2 \times 2$ matrix of coefficients that expresses $\kappa_1^4$ and $\kappa_1^2[U]$ in terms of $\{a_2'^4, a_3'^2\}$ modulo the relations. We then check that the determinant is non-vanshing.

(5) From a direct calculation provided in the code, we see that $\kappa_1^i$ is a nonzero multiple of $a_2'^i$ for $5 \leq i \leq 10$. For all $i \geq 11$, a monomial of degree $i$ in the generators $a_1, a_2', a_3'$ can be written as a product of monomials having degrees between $5$ and $10$, so the claim follows.
\end{proof}

\begin{proof}[Proof of Theorem \ref{taut}, $k = 4$]
By Lemma \ref{first4}, we see that every class in $R^*(\H_{4,g})$ is expressible as a polynomial in $\kappa_1$ times $[T], [D], [T] \cdot [D],$ or $[U]$. By Corollary \ref{TDUpush}, the push forwards of $[T], [D], [T] \cdot [D], [U]$ are tautological, so by push-pull, the push forwards of all classes in $R^*(\H_{4,g})$ are tautological on $\M_g \smallsetminus \M_g^3$.
\end{proof}

\subsection{Formulas in degree 5} \label{formulas5}
As in the previous section, we will perform the calculations on the spaces $\H_{5,g}$ instead of $\Hp_{5,g}$. As in degree $4$, the codimension $1$ identities are easily converted from Deopukar-Patel \cite[Proposition 2.8]{DP2} and \cite[Example 3.12]{part1}, which computes $\kappa_1$ in terms of CE classes.

\begin{lem} \label{5codim1}
The following identities hold in $A^1(\H_{5,g})$
\[\kappa_1 = (12g + 36)a_1 -12a_2' \qquad [T] =(24g + 84)a_1-24a_2' \qquad [D] = -(32g + 112)a_1 + 36a_2'.\]
\end{lem}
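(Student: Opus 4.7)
The plan is to derive all three identities by combining two off-the-shelf inputs. First, \cite[Example 3.12]{part1} expresses $\kappa_1$ on $\H_{k,g}$ as a polynomial in the CE classes for general $k$; specializing to $k = 5$ yields the first formula of the lemma. Second, Deopurkar--Patel \cite[Proposition 2.8]{DP2} expresses $[T]$ and $[D]$ on $\Hp_{k,g}$ as explicit linear combinations of $\kappa_1$ and $a_1$, with coefficients depending on $k$ and $g$. Substituting the $\kappa_1$ formula into these expressions gives the remaining two identities. This mirrors exactly the argument behind Lemma \ref{4codim1} in the degree-$4$ case.

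To execute the $\kappa_1$ step, I would apply the Riemann--Hurwitz relation $\omega_f = \omega_\alpha \otimes \alpha^*\omega_\pi$ and expand
\[
c_1(\omega_f)^2 = c_1(\omega_\alpha)^2 + 2\, c_1(\omega_\alpha)\cdot \alpha^*c_1(\omega_\pi) + \alpha^* c_1(\omega_\pi)^2.
\]
Pushing forward along $f = \pi \circ \alpha$ reduces the computation to the three classes $\alpha_* 1 = k$, $\alpha_* c_1(\omega_\alpha)$, and $\alpha_* c_1(\omega_\alpha)^2$, followed by a $\pi_*$. Using $\alpha_* \mathcal{O}_\C = \mathcal{O}_\P \oplus \E^\vee$ and Grothendieck duality $\alpha_*\omega_\alpha = \mathcal{O}_\P \oplus \E$, these intermediate classes on $\P$ are expressible in the Chern classes of $\E$; pushing forward to $\H_{5,g}$ and rewriting in terms of $z = c_1(\O_\P(1))$ produces a polynomial in the CE classes that (for $k = 5$) simplifies to $(12g + 36)a_1 - 12 a_2'$.

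For $[T]$ and $[D]$, the Deopurkar--Patel formulas have the shape $[T] = 2\kappa_1 + c_T(k,g)\, a_1$ and $[D] = -3\kappa_1 + c_D(k,g)\, a_1$, with the coefficients $c_T, c_D$ coming from a local analysis at a triple (respectively double-double) ramification point. Plugging in $k = 5$ and substituting the expression for $\kappa_1$ yields the stated formulas. The main obstacle is bookkeeping the normalizations: one must match the ``$a_1$'' appearing in \cite{DP2} with the CE class $a_1 = c_1(\E)$ used here (both are, up to a fixed scalar, expressible via the branch divisor class, so the match is a routine check). An alternative, more intrinsic path --- which I would fall back on if the conversion of DP's conventions proved thorny --- is to reconstruct $[T]$ and $[D]$ directly inside our framework by building small trapezoid diagrams: for $[T]$, use the restricted principal parts bundle $P^{\{1,x,y,x^2\}}_{\pp\E^\vee/\H_{5,g}}(\W)$ of Section \ref{partsec} applied to the universal section $\delta^{\mathrm{univ}}$, and for $[D]$, use two copies of $\pp\E^\vee$ fibered over $\P$ meeting in a shared fiber with first-order principal parts at each point. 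In either case, pushing forward the resulting top Chern class reproduces the formulas.
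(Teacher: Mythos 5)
Your proposal matches the paper's argument: the paper likewise obtains all three identities by specializing the $\kappa_1$ formula of \cite[Example 3.12]{part1} to $k=5$ and converting the Deopurkar--Patel expressions for $[T]$ and $[D]$ in terms of $\kappa_1$ and $a_1$ from \cite[Proposition 2.8]{DP2}, exactly as in the degree-$4$ case. Your extra detail on computing $\kappa_1$ via $\alpha_*\omega_\alpha$ and the fallback via principal parts bundles is consistent with the paper's machinery but not needed for this lemma.
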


Using the method explained in \cite[Example 3.12]{part1}, it is not difficult to compute $\kappa_2$ in terms of CE classes with our code \cite{github}.
\begin{lem} \label{5k2}
The following identities hold in $A^2(\H_{5,g})$
\[\kappa_2 = (6g^2+24g+40)c_2-6a_1^2+(-7g+2)a_2-7a_1a_2'+(2g+2)b_2+2a_1b_2'+5a_3'-b_3'.\]
Modulo the relations found in Lemma \ref{asy5},
\[\kappa_2 =(30g+66)a_1^2+(-21g+2)a_2-21a_1a_2'-(10g^3+66g^2+104g)c_2 .\]
\end{lem}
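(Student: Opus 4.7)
By definition $\kappa_2 = f_*(c_1(\omega_f)^3)$. Writing $f = \pi \circ \alpha$ and $c_1(\omega_f) = c_1(\omega_\alpha) + \alpha^* c_1(\omega_\pi)$, the binomial theorem together with the projection formula yields
\[
\kappa_2 \;=\; \sum_{i=0}^{3}\binom{3}{i}\, \pi_*\!\left(\alpha_*\bigl(c_1(\omega_\alpha)^{3-i}\bigr)\cdot c_1(\omega_\pi)^{i}\right).
\]
The outer pushforward $\pi_*$ is routine once every term lives on $\P$: one uses $c_1(\omega_\pi) = -2z$, the standard identities for $\pi_* z^j$ with $0 \le j \le 3$ (in particular $\pi_* z^3 = -c_2$), and the decompositions $c_i(\E) = \pi^*a_i + \pi^*a_i' \cdot z$ and $c_i(\F) = \pi^*b_i + \pi^*b_i' \cdot z$. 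The whole computation therefore reduces to expressing $\alpha_*(c_1(\omega_\alpha)^j) \in A^j(\P)$ for $0\le j \le 3$ as polynomials in $c_i(\E)$ and $c_i(\F)$.

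To carry this out I would mimic \cite[Example 3.12]{part1} and apply Grothendieck--Riemann--Roch to $\alpha\colon \C\to \P$ using the Casnati resolution of $\O_\C$ on $\p\E'$. For $k=5$ this is a length-three locally free resolution of $\O_\C$ whose terms are built out of $\E$, $\F$, and the relative tautological twists $\O_{\p\E'}(-i)$. Equivalently, one can read off $\ch(\alpha_*\omega_\alpha^{\otimes n})$ for $n = 0, 1$ from the structure-theoretic identifications $\alpha_*\O_\C = \O_\P \oplus \E^\vee$ and $\alpha_*\omega_\alpha \cong \O_\P \oplus \E$ (the latter by relative duality), and match these against the GRR identity $\ch(\alpha_*\omega_\alpha^{\otimes n}) = \alpha_*\bigl(e^{n c_1(\omega_\alpha)}\cdot \td(-\omega_\alpha)\bigr)$. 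Either approach produces a linear system whose unique solution expresses $\alpha_*(c_1(\omega_\alpha)^j)$ for $0 \le j \le 3$ in terms of $c_i(\E)$ and $c_i(\F)$.

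Substituting these expressions back into the binomial expansion and pushing forward via $\pi_*$ yields the first displayed formula for $\kappa_2$. This final assembly is a tedious but purely mechanical polynomial computation, carried out in the Macaulay2/Schubert2 script at \cite{github}. The second formula follows by reducing the first modulo the ideal of relations in Lemma \ref{asy5}, again handled by the code. The main obstacle is bookkeeping---specifically, fixing the correct twists by powers of $\O_{\p\E'}(1)$ and by $\det \E$ in the Casnati resolution, and then tracking coefficients consistently through the pushforward computation---but the computer algebra system handles both the linear system and the reduction without difficulty.
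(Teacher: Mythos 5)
Your proposal matches the paper's proof, which consists exactly of invoking the method of \cite[Example 3.12]{part1} --- expand $c_1(\omega_f)^3$ binomially, reduce to computing $\alpha_*(c_1(\omega_\alpha)^j)$ for $j\le 3$ via Grothendieck--Riemann--Roch and the Casnati resolution, and delegate the coefficient bookkeeping and the reduction modulo the relations of Lemma \ref{asy5} to the Schubert2 code at \cite{github}. The one cosmetic caveat is that your alternative route via $\alpha_*\O_{\C}\cong\O_{\P}\oplus\E^\vee$ and $\alpha_*\omega_\alpha\cong\O_{\P}\oplus\E$ produces an expression for $\kappa_2$ involving only the $a$-classes and $c_2$, so to recover the displayed formula (which involves $b_2$, $b_2'$, $b_3'$) you must additionally use the exact sequence $0\to\F\to\Sym^2\E\to\alpha_*\omega_\alpha^{\otimes 2}\to 0$ defining $\F$ on $\H_{5,g}$ to translate between the two equivalent forms.
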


Next, we wish to compute $[U]$ in terms of CE classes, which will require more work and geometric input. Once we have $[U]$ in terms of CE classes, it will not be hard to see that $[T], [D], [U]$ and $[T] \cdot [U]$ generate $R^*(\H_{5,g})$ as a module over $\qq[\kappa_1, \kappa_2]$. However, in contrast with the case $k = 4$, the classes $[T], [D], [U]$ do not generate $R^*(\H_{5,g})$ as a ring, so additional work is needed to prove the vanishing results for $A^i(\H_{5,g}^s)$. We do this by constructing the universal triple ramification point and showing that an additional codimension $2$ class needed to generate $R^*(\H_{5,g})$ as a ring is supported on $T$.

For these last computations, we work with the realization of the universal curve $\C \subset G(2, \F)$ as the vanishing locus of a section of a rank $6$ vector bundle, as we now describe. On $\pi: \P \to \H_{5,g}$, Casnati's structure theorem in degree $5$ determines a universal injection $\eta^{\mathrm{univ}}: \E' \to \wedge^2 \F$. Let $\mathcal{Q}$ be the rank $6$ cokernel. Let $\mu: G := Gr(2, \F) \to \P$ be the Grassmann bundle.
Then $\C \subset G$ is defined by the vanishing of the composition 
\[\O_{G}(-1) := \O_{\pp(\wedge^2 \F)}(-1)|_G \to \mu^*(\wedge^2 \F) \to \mu^* \Q,\]
which we view as a section $\sigma$ of $\mu^*\mathcal{Q} \otimes \O_G(1) =: \W$. Studying appropriate principal parts of this section $\sigma$ of $\W$ on $G$ over $\P$ helps us describe when $\C \to \P$ has a point of higher order ramification.

Precisely, the universal curve has a triple (resp. quadruple) ramification point at $p \in \C \subset G$ if and only if there exists a direction $x$ in $(T_{G/\P})_p$ such that 
\begin{enumerate} 
\item the coefficient of $x$ vanishes in all equations. This implies that the universal curve has a vertical tangent vector in the $x$ direction, and so is ramified at $p$.
\item Let $y_1, \ldots, y_5$ be the remaining first order coordinates on $(T_{G/\P})_p$.
Locally $\sigma$ corresponds to $6$ equations on $G$. Since the universal curve is smooth, when we expand these equations to first order, the coefficients of $y_1, \ldots, y_5$ must span a five-dimensional space. That is, on $\C$ each $y_i$ may be solved for as a power series in $x$ with leading term order $2$. Moreover, there is also a ``distinguished equation" whose first order parts are all zero. This ``distinguished equation" will correspond to a particular quotient of $\W$. 
\item After substituting for $y_i$ as a power series in $x$ using (2), all equations vanish to order $2$ (resp. order 3). This is only a condition on the distinguished equation (the substitutions for $y_i$ were determined so that the other five are identically zero). For order $2$ vanishing, this condition is just that the coefficient of $x^2$ in the distinguished equation is zero. 
For order $3$ vanishing, this will involve expanding through the coefficients of $xy_i$ and $x^3$. 
\end{enumerate}
Note that because $\C$ is smooth over $\H$, the distinguished direction $x$ and distinguished equation of (2) are unique.

\par Let $X:=\p T_{G/\P}\times_{\P}\p \W^\vee$. The first factor keeps track of an ``$x$-direction" and the second factor keeps track of a ``distinguished equation" among the equations. We apply the constructions of Section 6 to the tower
\[
X\xrightarrow{a}G\xrightarrow{\mu}\P.
\]
The pullback to $X$ of the dual of the tautological sequence on $\p T_{G/\P}$ gives a filtration
\[
0\rightarrow \Omega_y\rightarrow a^*\Omega_{G/\P}\rightarrow \Omega_x\rightarrow 0.
\]
Meanwhile, the pullback of the dual of the tautological sequence on $\p \W^{\vee}$ gives a quotient
\[
a^*\mu^* \W \rightarrow \O_{\p \W^{\vee}}(1) =: \W' \rightarrow 0.
\]
Let $S = \{1, x\}$ and $S' = \{1, x, y, x^2\}$ and set $M := P_{G/\P}^{S \subset S'}(\W \to \W')$, which is a quotient $a^*P^2_{G/\P}(\W)$ corresponding to (\ref{6T}A), pictured again below. The bundles that appear in the filtration are listed in the corresponding location to the right.
\begin{equation} \label{t5diag}
\begin{tikzpicture}
 \filldraw (0, 0) circle (5pt);
\draw (0, -1) circle (5pt);
 \filldraw (1, 0) circle (5pt);
 \draw (2, 0) circle (5pt);
 \begin{scope}[xshift=2cm]
 \fill[blue] (0,0) -- (90:1ex) arc (90:270:1ex) -- cycle;
 \end{scope}
  \begin{scope}[yshift=-1cm]
 \fill[blue] (0,0) -- (90:1ex) arc (90:270:1ex) -- cycle;
 \end{scope}
 \draw[white] (-.5, -1.2) circle (5pt);
\end{tikzpicture}
\hspace{.8in}
\begin{tikzpicture}
\node at (0, 0) {$\W$};
\node at (3, 0) {$\W \otimes \Omega_x$};
\node at (6, 0) {\color{blue} $\W' \otimes \Omega_x^2$}; 
\node at (0, -1) {\color{blue} $\W' \otimes \Omega_y$};
\end{tikzpicture}
\end{equation}
The bundle $M$ measures the values and coefficients of $x$ in the equations, as well as the coefficients of the $y_i$ and $x^2$ in a distinguished equation. It has rank $18$.

A section of $a^*P^2_{G/\P}(\W)$ induces a section of $M$.
In particular, the global section $\sigma$ of $\W$ induces a section $\sigma'$ of $a^*P^2_{G/\P}(\W)$, which then gives a section $\sigma''$ of $M$.
We claim that this section $\sigma''$ vanishes at some point $\tilde{p} \in X$ lying over $p \in G$ if and only if conditions (1) -- (3) above are satisfied (to order $2$) for the distinguished direction and distinguished equation referred to by $\tilde{p}$. In more detail:
the left $\CIRCLE = \W$ corresponds to the condition $p \in \C$; the right $\CIRCLE = \W \otimes \Omega_x$ gives condition (1); the lower ${\color{blue} \LEFTcircle = \W' \otimes \Omega_y}$ corresponds to condition (2); and and the right ${\color{blue} \LEFTcircle = \W' \otimes \Omega_x^2}$ corresponds to condition (3).

Hence, the vanishing locus $\widetilde{T}$ of this induced section of $M$ maps isomorphically to the universal triple ramification point.
A computation similar to the one in Lemma \ref{4quad} shows that this vanishing occurs in the expected codimension, so $[\tilde{T}]=c_{18}(M)$.
The composition from $\tilde{T}\rightarrow \H_{5,g}$ is generically one-to-one onto its image, so we obtain an equality of classes
\[
[T]=\pi_*\mu_*a_*[\tilde{T}].
\]
This pushforward can be computed using a computer, and agrees with Lemma \ref{5codim1}.

The universal quadruple ramification point is cut out inside $\widetilde{T}$ by one more condition: namely, after replacing each $y_i$ with its power series in $x$ as in (2),
the coefficient of $x^3$ in the distinguished equation must vanish.

Since $y_i$ is of order $2$ in $x$, only the terms $xy_i$ can contribute to the coefficient of $x^3$. We already know that the coefficients of $1, y_1, \ldots, y_5, x, x^2$ vanish in the distinguished equation (corresponding to the shape \eqref{t5diag}).
We therefore wish to study the expansion of the distinguished equation through its coefficients of $xy_1, \ldots, xy_5$ and $x^3$. This will correspond to two new dots (represented below in red). Let $S'' = \{1, x, y, x^2, xy, y^2, x^3\}$.
The part of the Taylor expansion we need corresponds to the bundle $N:= P_{G/\P}^{S \subset S''}(\W \to \W')$ from (\ref{U5}C), pictured below. The bundles in the filtration are listed in the corresponding location on the right.
 \begin{center}
    \begin{tikzpicture}
     \draw[white] (0, -1.2) circle (5pt);
 \filldraw (0, 0) circle (5pt);
\draw (0, -1) circle (5pt);
 \filldraw (1, 0) circle (5pt);
 \draw (2, 0) circle (5pt);
 \begin{scope}[xshift=2cm]
 \fill[blue] (0,0) -- (90:1ex) arc (90:270:1ex) -- cycle;
 \end{scope}
  \begin{scope}[yshift=-1cm]
 \fill[blue] (0,0) -- (90:1ex) arc (90:270:1ex) -- cycle;
 \end{scope}
   \begin{scope}[yshift=-1cm]
 \fill[blue] (0,0) -- (90:1ex) arc (90:270:1ex) -- cycle;
 \end{scope}
 \draw (1, -1) circle (5pt);
    \begin{scope}[yshift=-1cm, xshift = 1cm]
 \fill[red] (0,0) -- (90:1ex) arc (90:270:1ex) -- cycle;
 \end{scope}
  \draw (3, 0) circle (5pt);
    \begin{scope}[xshift = 3cm]
 \fill[red] (0,0) -- (90:1ex) arc (90:270:1ex) -- cycle;
 \end{scope}
 \end{tikzpicture}
 \hspace{.5in}
\begin{tikzpicture}
\node at (0, 0) {$\W$};
\node at (2.8, 0) {$\W \otimes \Omega_x$};
\node at (2.8*2, 0) {\color{blue} $\W' \otimes \Omega_x^2$}; 
\node at (2.8*3, 0) { \color{red} $\W' \otimes \Omega_x^3$}; 
\node at (0, -1) {\color{blue} $\W' \otimes \Omega_y$};
\node at (2.8, -1) {\color{red} $\W' \otimes \Omega_x \otimes \Omega_y$};
\end{tikzpicture}
 \end{center}
 Let $N^{\color{red} \LEFTcircle} \subset N$ be the kernel of $N \to M$. Visually, $N^{\color{red} \LEFTcircle}$ is
subbundle corresponding to the right-most partially filled circles, which is filtered by $\W' \otimes \Omega_x \otimes \Omega_y$ and $\W' \otimes \Omega_x^3$.
 By the definition of $\widetilde{T}$, on $\widetilde{T} \subset X$, the section of $N$ induced by $\sigma$ factors through $N^{\color{red} \LEFTcircle}$. We call this section $\sigma^{\color{red} \LEFTcircle}$.

To get a quadruple point, it needs to be the case that when we sub in the power series of the $y_i$'s in terms of $x$
into the distinguished equation, the coefficient of $x^3$ vanishes. This is the same as saying that the expansion of the distinguished equation
lies in the span of ``$x$ times" the $\{y, x^2\}$ parts of the other equations. This will correspond to vanishing of evaluation in a rank $1$ quotient of $N^{\color{red} \LEFTcircle}$ that we define below. This quotient will be isomorphic to $\W' \otimes \Omega_x^3$.

\begin{rem}
The vanishing order filtration on $N^{{\color{red} \LEFTcircle}}$ provides a \emph{sub}bundle $\W' \otimes \Omega_x^3 \subset N^{{\color{red} \LEFTcircle}}$. The construction of our desired \emph{quotient} $N^{{\color{red} \LEFTcircle}} \to \W' \otimes \Omega_x^3$ on $\widetilde{T}$ will crucially use the fact that the subschemes in the fibers of $\C \to \P$ are curve-linear (in particular, have $1$ dimensional tangent space). This is equivalent to the statement in (2) that the other $y_i$'s may be solved for as power series in $x$.
\end{rem}

To make this precise,
let $V$ be the kernel of $P_{G/P}^{\{1, x, y, x^2\}}(\O) \to \O$, which comes equipped with a filtration
\[0 \rightarrow \Omega_{x}^2 \to V \to a^*\Omega_{G/P} \rightarrow 0.\] The bundle $V$ is like the tangent bundle but ``with a bit of second order information in the distinguished direction." Considering the triple point inside $G$ referred to by each point of $\widetilde{T}$ determines a rank $2$ quotient $Q_{\mathrm{trip}}$ of $V$ on $\widetilde{T}$ that fits in a diagram
\begin{center}
\begin{tikzcd}
0 \arrow{r} &\Omega_x^2 \arrow{r} & V \arrow{d} \arrow{r} & a^*\Omega_{G/P} \arrow{d} \arrow{r} & 0 \\
& & Q_{\mathrm{trip}} \arrow{r} & \Omega_x.
\end{tikzcd}
\end{center}
Just as having a distinguished quotient of $a^*\Omega_{G/P}$ allowed us to refine bundles of principal parts in Section \ref{dir-ref}, so too does having this rank $2$ quotient of $V$. Let $L$ be the kernel of $Q_{\mathrm{trip}} \to \Omega_x$, so $L$ corresponds to the second order data along a triple ramification point. The map from upper left to lower right, $\Omega_x^2 \to L$, is non-vanishing because the square of the first order coordinate is non-zero on the triple point (this uses curve-linearity), so $L \cong \Omega_x^2$. Equivalently, the quotient $V \to Q_{\mathrm{trip}}$ does factor through $a^*\Omega_{G/\P}$ on any fiber (which would mean the fiber through $p$ had two-dimensional tangent space).
Now, $\ker(V \to \Omega_x)$ corresponds to the $\{y, x^2\}$ parts of our expansions. Similarly, $\ker(V \to \Omega_x) \otimes \Omega_x$ corresponds to the $\{xy, x^3\}$ parts.
Tensoring $\ker(V \to \Omega_x) \to L$ with $\W' \otimes \Omega_x$, we get the desired quotient 
\[N^{\color{red} \LEFTcircle} = \W' \otimes \Omega_x \otimes \ker(V \to \Omega_x) \to \W' \otimes  \Omega_x \otimes L \cong \W' \otimes \Omega_{x}^3.\]
The evaluation of $\delta^{\color{red} \LEFTcircle}$ in this quotient is zero precisely when condition (3) above is satisfied to order $3$.

Hence, the universal quadruple ramification point is determined by the vanishing of a section of a line bundle $\W' \otimes \Omega_x^3$ on $\widetilde{T}$. In particular,
\[[U] = \pi_*\mu_*a_*([\widetilde{T}] \cdot c_1(\W' \otimes \Omega_x^3)), \]
which we computed in Macaulay.
\begin{lem} \label{classU5}
The class of the ramification locus $U$ on $\H_{5,g}$ is 
\[
[U]=(12g+48)a_1^2-(4g+16)b_2-(4g^3+48g^2+192g+256)c_2-4a_1b_2'+4b_3'.
\]
Modulo the relations from  Lemma \ref{asy5},
\[
[U]=\frac{156g+468}{5}a_1^2-\frac{108g+216}{5}a_2-\frac{108}{5}a_1a_2'-\frac{52g^3+468g^2+1352g+1248}{5}c_2.
\]
\end{lem}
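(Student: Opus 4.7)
The plan is to realize the quadruple ramification locus $U$ as the image of the zero locus of a section of a line bundle on $\widetilde{T}$, and then push its class through the tower $\widetilde{T} \subset X \xrightarrow{a} G \xrightarrow{\mu} \P \xrightarrow{\pi} \H_{5,g}$. Almost all the geometric work has been packaged into the discussion preceding the lemma, so the proposal amounts to identifying the correct line bundle, checking codimension and multiplicity, and then invoking the projective/Grassmann bundle theorems together with a Macaulay2 calculation.

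First, I would confirm that $[\widetilde{T}] = c_{18}(M)$ and recall the rank $2$ filtration $0 \to \W' \otimes \Omega_x \otimes \Omega_y \to N^{\color{red} \LEFTcircle} \to \W' \otimes \Omega_x^3 \to 0$ on the kernel of $N \to M$, where the construction of the quotient $N^{\color{red} \LEFTcircle} \to \W' \otimes \Omega_x^3$ used the curve-linearity of the fiber through a triple ramification point (encoded in the fact that $L \cong \Omega_x^2$). On $\widetilde{T}$, the universal section $\sigma$ of $\W$ induces a section of $N^{\color{red} \LEFTcircle}$; its composition with the line bundle quotient is the coefficient of $x^3$ in the distinguished equation after substituting the power series $y_i = y_i(x)$ determined by condition (2). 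Its vanishing locus is precisely the universal quadruple ramification locus $\widetilde{U}$.

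Next, I would verify the expected codimension. A generic triple ramification point is not a quadruple ramification point, so the induced section of the line bundle $\W' \otimes \Omega_x^3$ does not vanish identically along $\widetilde{T}$; hence $\widetilde{U}$ has pure codimension $1$ in $\widetilde{T}$ and class
\begin{equation*}
[\widetilde{U}] = [\widetilde{T}] \cdot c_1(\W' \otimes \Omega_x^3) = c_{18}(M) \cdot c_1(\W' \otimes \Omega_x^3) \in A^{19}(X).
\end{equation*}
At a generic quadruple ramification point, the distinguished direction and distinguished equation are unique (condition (2) forces the tangent direction to be the ramification direction, and the ``distinguished equation'' is the unique element of the pencil of equations whose first-order coefficients are all zero), so $\widetilde{U} \to U$ is generically one-to-one. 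Therefore $[U] = \pi_*\mu_*a_*[\widetilde{U}]$.

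Finally, I would compute the pushforward. The bundle $M$ is built from $\W$, $\W'$, $\Omega_x$, $\Omega_y$ via the filtration \eqref{t5diag}, all of whose Chern roots are expressible in terms of the tautological classes on $X = \pp T_{G/\P} \times_{\P} \pp \W^\vee$, the Schubert classes on the Grassmann bundle $G = Gr(2, \F)$, and the classes $z$, $a_i, a_i', b_i, b_i'$ on $\P \to \H_{5,g}$. Pushing forward through the projective bundle $a$ and the Grassmann bundle $\mu$ by iterated use of the projective/Grassmann bundle theorems, then through $\pi$, is a direct but lengthy symbolic computation; I would delegate it to the Macaulay2 package Schubert2 as in the rest of the paper. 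The output gives the first formula in the statement; applying the relations of Lemma~\ref{asy5} to rewrite $a_3', b_2, b_2', b_3'$ in terms of $a_1, a_2', a_2, c_2$ (again via Macaulay2) yields the second formula. The only real subtlety is the construction of the quotient $N^{\color{red} \LEFTcircle} \to \W' \otimes \Omega_x^3$, and that has already been handled above the lemma statement, so the remaining obstacle is purely computational.
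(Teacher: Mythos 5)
Your proposal matches the paper's argument: the paper likewise realizes the universal quadruple ramification point as the vanishing of the induced section of the line-bundle quotient $\W' \otimes \Omega_x^3$ of $N^{\color{red} \LEFTcircle}$ on $\widetilde{T}$, obtains $[U] = \pi_*\mu_*a_*([\widetilde{T}] \cdot c_1(\W' \otimes \Omega_x^3))$ with $[\widetilde{T}] = c_{18}(M)$, and evaluates the pushforward in Macaulay2 before reducing modulo the relations of Lemma \ref{asy5}. No gaps; this is essentially the same proof.
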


We now give additive generators for $R^*(\H_{5,g})$.
\begin{lem}\label{5taut}
Suppose $g\geq 2$. Then,
\begin{enumerate}
    \item $R^1(\H_{5,g})$ is spanned by $[T]$ and $[D]$. Alternately, it is spanned by $[T]$ and $\kappa_1$.
    \item $R^2(\H_{5,g})$ is spanned by $[T]\kappa_1$, $[D]\kappa_1$, $[T]\cdot [D]$, $[U]$, $\kappa_2$.
    \item $R^3(\H_{5,g})$ is spanned by $[T]\kappa_1^2, [D]\kappa_1^2, [T]\cdot [D] \kappa_1, [U]\kappa_1, [T]\kappa_2, [D]\kappa_2$.
    \item $R^4(\H_{5,g})$ is spanned by $[T]\kappa_1^3, \kappa_1^4, [T]\kappa_1\kappa_2,  [T]\cdot [D] \kappa_2, \kappa_2^2, \kappa_1^2\kappa_2, [U]\kappa_2$.
    \item $R^5(\H_{5,g})$ is spanned by $[T]\kappa_1^4,[T]\kappa_2^2,\kappa_1^5,\kappa_1\kappa_2^2$.
    \item $R^6(\H_{5,g})$ is spanned by $[T]\kappa_1^5,\kappa_1^6,\kappa_1^4\kappa_2$.
    \item $R^i(\H_{5,g})$ is spanned by $[T]\kappa_1^{i-1},\kappa_1^i$ for $i\geq 7$.
\end{enumerate}
\end{lem}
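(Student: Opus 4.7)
The plan mirrors the degree-$4$ argument of Lemma \ref{first4}. Using Lemmas \ref{5codim1}, \ref{5k2}, and \ref{classU5}, I would rewrite each of $[T], [D], [U], \kappa_1, \kappa_2$ as an explicit polynomial in the CE classes $a_1, a_2', a_2, c_2$. Combined with the explicit bases of $R^i(\H_{5,g})$ from Corollary \ref{5span} and the presentation \eqref{simpres}, this reduces every claim to a finite-dimensional linear-algebra check inside the quotient ring $\qq[a_1, a_2', a_2, c_2]/\langle r_1, \ldots, r_5\rangle$.

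\textbf{Parts (1)--(6).} Part (1) is immediate from Lemma \ref{5codim1}, since any two of $[T], [D], \kappa_1$ are non-proportional linear combinations of $a_1$ and $a_2'$, hence span the $2$-dimensional space $R^1(\H_{5,g}) = \qq\langle a_1, a_2'\rangle$. For parts (2)--(6), I would form, for each $i$, the matrix whose rows express the proposed spanning classes (products of $[T], [D], [U], \kappa_1, \kappa_2$) in the CE-class basis of Corollary \ref{5span}(i), and verify that its rank equals $\dim R^i(\H_{5,g})$. Each verification reduces to checking that certain determinants over $\qq(g)$ do not vanish for any $g \geq 2$; these are computations of the same type as in the code \cite{github}, and can be added there.

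\textbf{Part (7).} For $i \geq 7$, $R^i(\H_{5,g})$ is two-dimensional with basis $\{a_1 a_2'^{i-1}, a_2'^i\}$, so I need only show $\kappa_1^i$ and $[T]\kappa_1^{i-1}$ are linearly independent in this space. Following the strategy of Lemma \ref{first4}(5), I would verify this directly by computer for a finite range of $i$ (say $7 \leq i \leq 14$), and then extend by the multiplicative structure: any monomial $a_1 a_2'^{i-1}$ or $a_2'^i$ of degree $i \geq 15$ factors as a product of two such monomials of degrees in $\{7, \ldots, 14\}$, so independence for all $i$ follows from independence on the finite range and closure of the span under multiplication by $\kappa_1$. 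Alternatively, one can use that $[T] = 2\kappa_1 + 12a_1$ (immediate from Lemma \ref{5codim1}) to replace the pair $\{\kappa_1^i, [T]\kappa_1^{i-1}\}$ by $\{\kappa_1^i, a_1\kappa_1^{i-1}\}$, and then show that $a_1 \kappa_1^{i-1}$ has nonzero $a_1 a_2'^{i-1}$-coefficient modulo $(a_2'^i)$ while $\kappa_1^i$ has nonzero $a_2'^i$-coefficient modulo $(a_1 \cdot R^{i-1})$.

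\textbf{Main obstacle.} The key subtlety is that the relations $r_1, \ldots, r_5$ have coefficients in $\qq[g]$, so the independence verifications for parts (2)--(6) produce determinants in $\qq(g)$ and one must ensure these do not vanish for any integer $g \geq 2$, not merely generically. For part (7) the added complication is that reducing $a_1^j a_2'^{i-j}$ with $j \geq 2$ back to the basis $\{a_1 a_2'^{i-1}, a_2'^i\}$ requires repeated application of the relations, so explicit expressions for the coefficients become unwieldy as $i$ grows; this is why the argument defers to the computer verification on a finite base range of $i$ combined with the multiplicative extension.
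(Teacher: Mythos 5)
Your proposal follows essentially the same route as the paper's proof: express $[T],[D],[U],\kappa_1,\kappa_2$ in CE classes via Lemmas \ref{5codim1}, \ref{5k2}, and \ref{classU5}, reduce modulo the relations of Lemma \ref{asy5} against the spanning sets of Corollary \ref{5span}, and check that the resulting determinants in $\qq[g]$ have no integer roots $g\geq 2$, with $i\geq 15$ handled by factoring monomials into pieces of degree at least $7$. The one step to make explicit in part (7) is the reduction of the cross term $[T]^2\kappa_1^{i-2}=\kappa_1^{i-7}\cdot([T]^2\kappa_1^{5})$ back into the span of $\{\kappa_1^{i},[T]\kappa_1^{i-1}\}$ via the degree-$7$ base case (plain ``closure under multiplication by $\kappa_1$'' does not cover it), which the paper does and which your substitution $[T]=2\kappa_1+12a_1$ would also accomplish.
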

\begin{proof}
Using Lemmas \ref{5codim1}, \ref{5k2}, and \ref{classU5},
we can write down expressions for each class in the statement of the Lemma in terms of Casnati--Ekedahl classes.
Modulo our relations in Section \ref{asy5}, Macaulay gives a formula for these classes in terms of the spanning sets of Corollary \ref{5span}.

For each $i$, we can then write down a matrix whose entries are the coefficients of the expression for the classes in the statement of the lemma in terms of the CE spanning set. We then check if the determinant of the matrix of coefficients, which is a polynomial in $g$, has no positive integer roots.  For example, in codimension $1$, we have that $\{a_1,a_2'\}$ is a spanning set, and we have
\[
[T] =(24g + 84)a_1-24a_2' \qquad [D] = -(32g + 112)a_1 + 36a_2'.
\]
The matrix of coefficients
\[
\begin{pmatrix}
24g+84 & -24 \\
-32g-112 & 36 
\end{pmatrix}
\]
has determinant $96g+336$, which has no integer roots, so $[T]$ and $[D]$ span $R^1(\H_{5,g})$. A similar calculation shows that $[T]$ and $\kappa_1$ span $R^1(\H_{5,g})$. For $2\leq i\leq 6$, we repeat the process, and the determinants are calculated at \cite{github}. None of them has roots at any integer $g\geq 2$.

When $i \geq 7$, we use an argument similar to Section \ref{ach5}.
For $7 \leq i \leq 14$, we check that 
$[T]\kappa_1^{i-1}$ and $\kappa_1^i$ span, by showing that the matrix of coefficients to express these in terms of $a_1 a_2'^{i-1}$ and $a_2'^i$ is invertible. Because it $R^*(\H_{5,g})$ is generated in degrees $1$ and $2$, for $i \geq 15$, every monomial class in $R^*(\H_{5,g})$ is expressible as a product of two monomials, both of degree at least $7$.
Then the product of two such monomials is in the span of $\kappa_1^i, \kappa_1^{i-1} [T]$ and $\kappa_1^{i-2} [T]^2 = \kappa_1^{i-7}(\kappa_1^5 [T]^2)$. The last monomial is already in the span of the first two because $R^7(\H_{5,g})$ is spanned by $\kappa_1^7, \kappa_1^{6} [T]$. The last part (7) now follows.
\end{proof}
As a consequence, we finish the proofs of Theorem \ref{taut} and Theorem \ref{GPRC}.
\begin{proof}[Proof of Theorem \ref{taut}, $k = 5$]
By Lemma \ref{5taut}, we see that every class in $R^*(\H_{5,g})$ is expressible as a polynomial in the kappa classes times $[T], [D],$ or $[U]$. By Corollary \ref{TDUpush}, the push forwards of $[T], [D], [U]$ are tautological, so by push-pull, the push forwards of all classes in $R^*(\H_{5,g})$ are tautological on $\M_g$.
\end{proof}

\begin{proof}[Proof of Theorem \ref{GPRC}, $k=5$]
For $i$ in the range of the statement, we have $A^i(\H_{5,g}) = R^i(\H_{5,g})$. Thus, it
suffices to produce generators for $R^*(\H_{5,g})$ as a ring that are supported on $T$ and $D$. We know from Theorem \ref{main} (3) that $R^*(\H_{5,g})$ is generated by two classes in degree $1$ and two classes in degree $2$. The classes $[T]$ and $[D]$ generate $R^1(\H_{5,g})$. Then, we computed $\pi_* (\mu_* a_*([\widetilde{T}]) \cdot z)$, which is supported on $T$, in the code \cite{github}. The result is that
\[
\pi_* (\mu_* a_*([\widetilde{T}]) \cdot z)=(3g^2+24g+48)c_2-3a_1^2-3a_2+3b_2.
\]
Modulo the relations from Lemma \ref{rels5}, this class is given by
\begin{equation} \label{extraclass}
\pi_* (\mu_* a_*([\widetilde{T}]) =12a_1^2-24a_2-(12g^2+84g-144)c_2.
\end{equation}
Using Lemma \ref{classU5}, we see that $\pi_* (\mu_* a_*([\widetilde{T}]) \cdot z)$ and $[U]$ are independent modulo products of codimension $1$ classes. 
Since $R^*(\H_{5,g})$ is generated in codimension $1$ and $2$, we conclude that $R^*(\H_{5,g})$ is generated by $[T], [D], [U]$ and the class in \eqref{extraclass}, which are all supported on $T$ and $D$.
\end{proof}

\bibliographystyle{amsplain}
\bibliography{refs}

\providecommand{\bysame}{\leavevmode\hbox to3em{\hrulefill}\thinspace}
\providecommand{\MR}{\relax\ifhmode\unskip\space\fi MR }
\providecommand{\MRhref}[2]{%
  \href{http://www.ams.org/mathscinet-getitem?mr=#1}{#2}
}
\providecommand{\href}[2]{#2}
\begin{thebibliography}{10}

\bibitem{B3}
Manjul Bhargava, \emph{Higher composition laws. {II}. {O}n cubic analogues of
  {G}auss composition}, Ann. of Math. (2) \textbf{159} (2004), no.~2, 865--886.
  \MR{2081442}

\bibitem{B4}
\bysame, \emph{Higher composition laws. {III}. {T}he parametrization of quartic
  rings}, Ann. of Math. (2) \textbf{159} (2004), no.~3, 1329--1360.
  \MR{2113024}

\bibitem{BD4}
\bysame, \emph{The density of discriminants of quartic rings and fields}, Ann.
  of Math. (2) \textbf{162} (2005), no.~2, 1031--1063. \MR{2183288}

\bibitem{B5}
\bysame, \emph{Higher composition laws. {IV}. {T}he parametrization of quintic
  rings}, Ann. of Math. (2) \textbf{167} (2008), no.~1, 53--94. \MR{2373152}

\bibitem{BD5}
\bysame, \emph{The density of discriminants of quintic rings and fields}, Ann.
  of Math. (2) \textbf{172} (2010), no.~3, 1559--1591. \MR{2745272}

\bibitem{B}
S\o ren~K. Boldsen, \emph{Improved homological stability for the mapping class
  group with integral or twisted coefficients}, Math. Z. \textbf{270} (2012),
  no.~1-2, 297--329. \MR{2875835}

\bibitem{BV}
Michele Bolognesi and Angelo Vistoli, \emph{Stacks of trigonal curves}, Trans.
  Amer. Math. Soc. \textbf{364} (2012), no.~7, 3365--3393. \MR{2901217}

\bibitem{CL}
Samir Canning and Hannah Larson, \emph{The {C}how rings of the moduli spaces of
  curves of genus $7,8$ and $9$}, arXiv:2104.05820 (2021).

\bibitem{github}
\bysame, \emph{Low-degree-hurwitz},
  \url{https://github.com/src2165/Low-Degree-Hurwitz/}, 2021.

\bibitem{part1}
\bysame, \emph{Tautological classes on low-degree {H}urwitz spaces},
  arXiv:2103.09902v2 (2021).

\bibitem{CE}
G.~Casnati and T.~Ekedahl, \emph{Covers of algebraic varieties. {I}. {A}
  general structure theorem, covers of degree {$3,4$} and {E}nriques surfaces},
  J. Algebraic Geom. \textbf{5} (1996), no.~3, 439--460. \MR{1382731}

\bibitem{C}
Gianfranco Casnati, \emph{Covers of algebraic varieties. {II}. {C}overs of
  degree {$5$} and construction of surfaces}, J. Algebraic Geom. \textbf{5}
  (1996), no.~3, 461--477. \MR{1382732}

\bibitem{DP}
Anand Deopurkar and Anand Patel, \emph{The {P}icard rank conjecture for the
  {H}urwitz spaces of degree up to five}, Algebra Number Theory \textbf{9}
  (2015), no.~2, 459--492. \MR{3320849}

\bibitem{DP2}
\bysame, \emph{Syzygy divisors on {H}urwitz spaces}, Higher genus curves in
  mathematical physics and arithmetic geometry, Contemp. Math., vol. 703, Amer.
  Math. Soc., Providence, RI, 2018, pp.~209--222. \MR{3782468}

\bibitem{EH}
David Eisenbud and Joe Harris, \emph{3264 and all that---a second course in
  algebraic geometry}, Cambridge University Press, Cambridge, 2016.
  \MR{3617981}

\bibitem{EVW}
Jordan~S. Ellenberg, Akshay Venkatesh, and Craig Westerland, \emph{Homological
  stability for {H}urwitz spaces and the {C}ohen-{L}enstra conjecture over
  function fields}, Ann. of Math. (2) \textbf{183} (2016), no.~3, 729--786.
  \MR{3488737}

\bibitem{FP}
C.~Faber and R.~Pandharipande, \emph{Relative maps and tautological classes},
  J. Eur. Math. Soc. (JEMS) \textbf{7} (2005), no.~1, 13--49. \MR{2120989}

\bibitem{F}
Carel Faber, \emph{A conjectural description of the tautological ring of the
  moduli space of curves}, Moduli of curves and abelian varieties, Aspects
  Math., E33, Friedr. Vieweg, Braunschweig, 1999, pp.~109--129. \MR{1722541}

\bibitem{GSS}
Daniel~R. Grayson, Alexandra Seceleanu, and Michael~E. Stillman,
  \emph{Computations in intersection rings of flag bundles}, arXiv:1205.4190
  (2012).

\bibitem{M2}
Daniel~R. Grayson and Michael~E. Stillman, \emph{Macaulay2, a software system
  for research in algebraic geometry}, Available at
  \url{http://www.math.uiuc.edu/Macaulay2/}.

\bibitem{S2}
Daniel~R. Grayson, Michael~E. Stillman, Stein~A. Str{\o}mme, David Eisenbud,
  and Charley Crissman, \emph{{Schubert2: characteristic classes for varieties
  without equations. Version~0.7}}, A \emph{Macaulay2} package available at
  "http://www.math.uiuc.edu/Macaulay2/".

\bibitem{H}
John~L. Harer, \emph{Stability of the homology of the mapping class groups of
  orientable surfaces}, Ann. of Math. (2) \textbf{121} (1985), no.~2, 215--249.
  \MR{786348}

\bibitem{HM}
Joe Harris and Ian Morrison, \emph{Moduli of curves}, Graduate Texts in
  Mathematics, vol. 187, Springer-Verlag, New York, 1998. \MR{1631825}

\bibitem{I}
Eleny-Nicoleta Ionel, \emph{Relations in the tautological ring of
  {$\mathscr{M}_g$}}, Duke Math. J. \textbf{129} (2005), no.~1, 157--186.
  \MR{2155060}

\bibitem{L}
Hannah~K. Larson, \emph{Universal degeneracy classes for vector bundles on
  {$\Bbb{P}^1$} bundles}, Adv. Math. \textbf{380} (2021), 107563, 20.
  \MR{4200467}

\bibitem{MW}
Ib~Madsen and Michael Weiss, \emph{The stable moduli space of {R}iemann
  surfaces: {M}umford's conjecture}, Ann. of Math. (2) \textbf{165} (2007),
  no.~3, 843--941. \MR{2335797}

\bibitem{M}
Rick Miranda, \emph{Triple covers in algebraic geometry}, Amer. J. Math.
  \textbf{107} (1985), no.~5, 1123--1158. \MR{805807}

\bibitem{Mu}
Scott Mullane, \emph{The {H}urwitz space {P}icard rank conjecture for $d>g-1$},
  arXiv:2009.10063 (2020).

\bibitem{Mum}
David Mumford, \emph{Towards an enumerative geometry of the moduli space of
  curves}, Arithmetic and geometry, {V}ol. {II}, Progr. Math., vol.~36,
  Birkh\"{a}user Boston, Boston, MA, 1983, pp.~271--328. \MR{717614}

\bibitem{PV}
Nikola Penev and Ravi Vakil, \emph{The {C}how ring of the moduli space of
  curves of genus six}, Algebr. Geom. \textbf{2} (2015), no.~1, 123--136.
  \MR{3322200}

\bibitem{VW}
Ravi Vakil and Melanie~Matchett Wood, \emph{Discriminants in the {G}rothendieck
  ring}, Duke Math. J. \textbf{164} (2015), no.~6, 1139--1185. \MR{3336842}

\bibitem{VZ}
Jason van Zelm, \emph{Nontautological bielliptic cycles}, Pacific J. Math.
  \textbf{294} (2018), no.~2, 495--504. \MR{3770123}

\bibitem{Z}
Angelina Zheng, \emph{Rational cohomology of the moduli space of trigonal
  curves of genus 5}, arXiv:2007.12150 (2020).

\bibitem{Z2}
Angelina Zheng, \emph{Stable cohomology of the moduli space of trigonal
  curves}, arXiv:2106.07245 (2021).

\end{thebibliography}
\end{document}